\colorlet{mdtRed}{red!50!black}
\definecolor{dblue}{rgb}{0,0,.6}
\newtcolorbox{mymathbox}[1][]{colback=white, sharp corners, #1}
\newtheorem{theorem}[equation]{Theorem}
\newtheorem{corollary}[equation]{Corollary}
\newtheorem{lemma}[equation]{Lemma}
\newtheorem{proposition}[equation]{Proposition}
\newtheorem*{theorem*}{Theorem}
\newtheorem*{proposition*}{Proposition}
\theoremstyle{definition}
\newtheorem{definition}[equation]{\bf Definition}
\theoremstyle{remark}
\newtheorem{remark}[equation]{\bf Remark}
\newcommand{\Z}{\mathbb{Z}}
\newcommand{\C}{\mathbb{C}}
\newcommand{\B}{\mathcal{B}}
\renewcommand{\P}{\mathbb{P}}
\renewcommand{\O}{\mathcal{O}}
\newcommand{\mb}[1]{\mathbb{#1}}
\newcommand{\mc}[1]{\mathcal{#1}}
\newcommand{\F}{\mathcal{F}}
\newcommand{\m}{\mathfrak{m}}
\renewcommand{\S}{\mathscr{S}}
\newcommand{\G}{\mathcal{G}}
\newcommand{\V}{\mathcal{V}}
\newcommand{\coker}{\text{coker}}
\newcommand{\im}{\text{im}\,}
\newcommand{\supp}{\text{Supp}\,}
\newcommand{\rk}{\text{rank}}
\newcommand{\id}{\textup{id}}
\newcommand{\Id}{\textnormal{Id}}
\newcommand{\wc}{\widetilde{C}}
\newcommand{\wt}[1]{\widetilde{#1}}
\newcommand{\tf}[0]{{\rm tf}}
\newcommand{\q}[1]{Q_{k,d}^{{\tf},#1}}
\newcommand{\ext}[0]{\textnormal{ext}}
\newcommand{\Ext}[0]{\textnormal{Ext}}
\newcommand{\EXT}[0]{\mathscr{E}xt}
\newcommand{\h}[0]{\textrm{h}}
\newcommand{\Hom}[0]{\textrm{Hom}}
\newcommand{\HOM}[0]{\mathscr{H}om}
\newcommand{\vp}[0]{\varphi}
\newcommand{\Quot}[0]{{\rm Quot}}
\newcommand{\Tor}[0]{\textnormal{Tor}}
\newcommand{\ol}[0]{\overline}
\newcommand{\soc}[0]{\textnormal{Soc}}
\newcommand{\len}[0]{\textnormal{length}}
\numberwithin{equation}{section}
\renewcommand \subsection[1]{
	\refstepcounter{equation}
	\refstepcounter{subsection}
	\noindent {\bf \arabic{section}.\arabic{subsection}.}{\bf #1}.
}
\begin{document}

\title[Irreducibility of some Quot schemes on Nodal curves]{Irreducibility of some Quot schemes on Nodal curves}
\author[Parvez Rasul]{Parvez Rasul}

\address{Department of Mathematics, Indian Institute of Technology Bombay, Powai, Mumbai 
	400076, Maharashtra, India}

\email{\href{mailto:rasulparvez@gmail.com}{rasulparvez@gmail.com}}

\subjclass[2010]{14H60}

\keywords{Quot scheme}

\begin{abstract}
Let $C$ be an integral projective nodal curve over $\C$, 
of arithmetic genus $g \geqslant 2$.
Let $E$ be a vector bundle on $C$ of rank $r$ and degree $e$.
Let $\Quot_{C/\C}(E,k,d)$ denote the Quot scheme of quotients of $E$ of rank $k$ and degree $d$. 
We show that $\Quot_{C/\C}(E,k,d)$ is irreducible for $d \gg 0$.
\end{abstract}

\maketitle

\section{Introduction}
Let $C$ be a reduced and irreducible projective curve over $\C$ of arithmetic genus $g$.
For a coherent sheaf $F$ on $C$ of rank $k$, the degree of $F$ is defined as
\begin{equation}\label{degree definition}
    \deg F := \chi(F) - k(1-g)\,.
\end{equation}
Let $E$ be a vector bundle on $C$ of rank $r$ and degree $e$. 
Let $\Quot_{C/\C}(E,k,d)$ denote Grothendieck's Quot scheme of coherent quotients of $E$ of rank $k$ and degree $d$.
We will use the simpler notation $Q_{k,d}(E)$ to denote the quot scheme $\Quot_{C/\C}(E,k,d)$.
Quot schemes have played a central role in the study of vector bundles on curves, in particular for constructing and understanding geometry of the moduli spaces.
Quot schemes also arise as a compactification of the space of maps into Grassmannians.

When $C$ is a smooth curve, the quot scheme 
$Q_{0,d}(E)$ of torsion quotients of $E$ is a smooth projective variety, which has been studied extensively.
We refer the reader to the recent works  \cite{BDH15},\cite{BGS22}, \cite{OS23}, \cite{OP21} and references therein.
There are few results when $C$ is singular.
In \cite{Reg82}, the author shows that if $C$ is embeddable in a smooth surface then 
the quot scheme $Q_{0,d}(\O^{\oplus r}_{C})$  is irreducible. It is also proved that the compactification of moduli spaces of vector bundles on such curves is irreducible.
Some sort of converse is proved in \cite{KK81}, which shows that if $C$ is not embeddable in a smooth surface then the quot scheme 
$Q_{0,d}(\omega)$ is reducible, where $\omega$ is the dualizing sheaf on $C$.
The authors use this to show that the compactified Jacobian of such $C$ is reducible.
In \cite{Ran05}, the punctual Hilbert scheme parametrizing degree-$d$ subschemes of $C$ supported at the node is shown to be a chain of $d-1$ rational curves meeting transversely.
It is also proved that the relative Hilbert scheme $\textrm{Hilb}(X/B)$ parametrizing length-$m$ subschemes in the fibres of the family $X/B$ is smooth, where $X$ is a 1-parameter smooth family of curves over $B$.

We come to the case $k>0$ and when the curve $C$ is smooth.
In genus 0 case, when $E$ is the trivial bundle, Stromme proved that the Quot scheme $Q_{k,d}(\O^{\oplus r}_{\P^1})$ is smooth and irreducible and computed its Picard group in \cite{Str}.
Let $C$ be smooth of genus $g > 0$. When $E$ is trivial, it is proved in \cite{BDW} that the Quot scheme $Q_{k,d}(\O_C^{\oplus r})$ is irreducible and generically smooth for $d \gg 0$. 
The authors also show that $Q_{k,d}(\O_C^{\oplus r})$ is a local complete intersection for large $d$.
For any vector bundle $E$ on $C$, it is proved in \cite{PR03} that the quot scheme $Q_{k,d}(E)$ is irreducible and generically smooth when $d \gg 0$.
The Picard group of the Quot scheme $Q_{k,d}(E)$ is computed in \cite{gs22} for $d \gg 0$.
The authors also show that if $d \gg 0$ then the Quot scheme is an integral, normal variety which is a local complete intersection and locally factorial.

In this article we extend some of the above results to the case when $C$ is a nodal curve.
Degeneration techniques have found many applications in moduli problems and in enumerative geometry,
for example, see \cite{LW15}, \cite{Gol19} and references therein. 
Consequently, it is interesting to study quot schemes over nodal curves.
In \cite{Gol19}, the author defines a nice intersection theory on the quot scheme over nodal curves by relating it to the Quot schemes on the normalization.
In order to do this, the author shows that, for a very general vector bundle $E$ on $C$, the unique top dimensional component of the quot scheme $Q_{k,d}(E)$ is generically smooth and contains all the points corresponding to torsion-free quotients.

In this article we prove the following theorem.
\begin{theorem*}[Theorem \ref{theorem main}]
    Let $C$ be a reduced irreducible projective curve over $\C$ of arithmetic genus $g \geqslant 2$ with exactly one ordinary node and smooth elsewhere.
Let $E$ be a vector bundle on $C$ of rank $r$ and degree $e$.
Let $k$ be an integer such that $0<k<r$.
    Then there is a number $d_Q(E,k)$ such that if $d \geqslant d_Q$ then the quot scheme $Q_{k,d}(E)$ is irreducible and generically smooth of dimension $dr-ke+k(r-k)(1-g)$.
\end{theorem*}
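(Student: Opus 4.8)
The plan is to isolate the open locus $U\subseteq Q_{k,d}(E)$ of quotients $E\twoheadrightarrow F$ with $F$ locally free, prove that $U$ is irreducible of the expected dimension, and show its complement is negligible. Write $\delta:=dr-ke+k(r-k)(1-g)$. For a quotient $[\,0\to S\to E\to F\to 0\,]$ the Zariski tangent space is $\Hom(S,F)$, the obstructions lie in $\Ext^1(S,F)$, and $\dim\Hom(S,F)-\dim\Ext^1(S,F)=\chi(S,F)=\delta$; hence every irreducible component of $Q_{k,d}(E)$ has dimension $\geqslant\delta$, and a component is generically smooth of dimension $\delta$ as soon as it contains a point with $\Ext^1(S,F)=0$. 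So it suffices to establish, for $d\gg0$: (i) the locus $Z:=Q_{k,d}(E)\setminus U$ of quotients with $F$ not locally free has $\dim Z<\delta$; (ii) $U$ is irreducible; (iii) some point of $U$ has vanishing obstruction space. Granting these, (i) forces every component to meet $U$, so (ii) gives a unique component, and (iii) together with the tangent--obstruction estimate gives generic smoothness and the stated dimension.

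For (i) I would stratify $Z$ by the length $\ell\geqslant1$ of the torsion subsheaf $T\subset F$, and on the torsion-free part by the local type $b\geqslant1$ of $F$ at the node $p$. A quotient with torsion is the same datum as a torsion-free quotient $E\twoheadrightarrow F/T$ of degree $d-\ell$ together with a length-$\ell$ torsion quotient of its (rank $r-k$) kernel, so the corresponding stratum has dimension at most $\dim(\text{torsion-free locus of }Q_{k,d-\ell}(E))+\ell(r-k)+O(1)$; since torsion-free quotients of large degree already have dimension $\delta-\ell r$ (treated inductively in $d$, the torsion length being bounded because quotients of $E$ have degree bounded below, so only finitely many base degrees fall outside the induction and contribute $O(1)$), this is $\leqslant\delta-k\ell+O(1)<\delta$. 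For the torsion-free, non-locally-free strata one uses that a rank-$k$ torsion-free sheaf on $C$ of local type $b$ at the node arises from a rank-$k$ bundle on the normalization by a gluing along the two points over $p$ that identifies only $k-b$ of the $k$ directions, so these sheaves, and the quotients onto them, cut out codimension $\geqslant b$; a direct count gives dimension $\leqslant\delta-b+O(1)<\delta$. Choosing $d_Q(E,k)$ large enough to swallow the finitely many $O(1)$'s yields (i). (One may instead organize the same estimates by stratifying according to the $\muhn$ type of $F$.)

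The main step is (ii), and for it I would pass to the normalization $\nu\colon\wc\to C$, a smooth curve of genus $g-1\geqslant1$, with $\nu^{-1}(p)=\{p_1,p_2\}$ and $\nu^*E|_{p_i}=E_p$. Pulling back a vector-bundle quotient $E\twoheadrightarrow F$ gives a vector-bundle quotient $\nu^*E\twoheadrightarrow\nu^*F$ on $\wc$, and (as the $\chi$-degree is preserved under $\nu^*$ on bundles) a point of $Q_{k,d}(\nu^*E)$ on the smooth curve $\wc$, which by the known smooth case (\cite{PR03}, \cite{gs22}) is irreducible and generically smooth of dimension $\delta+k(r-k)$. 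This identifies $U$ with the sublocus of $Q_{k,d}(\nu^*E)$ where the kernels of the fibre maps at $p_1$ and at $p_2$ coincide as subspaces of $E_p$ — the gluing datum recovering $F$ on $C$ being then uniquely forced — that is, with the preimage of the diagonal under the rational map $Q_{k,d}(\nu^*E)\dashrightarrow\grass(r-k,E_p)\times\grass(r-k,E_p)$. For $d\gg0$ one then shows this map is dominant and smooth along the diagonal, so its fibre over the diagonal is irreducible of codimension $k(r-k)$; this reduces to the cohomological vanishings behind the smooth-curve theorem, now imposed at the two marked points. Granting it, $U$ is irreducible of dimension $(\delta+k(r-k))-k(r-k)=\delta$. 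Finally (iii) holds for $d\gg0$ because for a general $[q]\in U$ the sheaf $\mathcal{H}om(S,F)$ has degree $rd-ke\gg0$, so $\Ext^1(S,F)=H^1(\mathcal{H}om(S,F))=0$ — vanishing for one quotient, propagated by semicontinuity, or inherited from the generic smoothness of $Q_{k,d}(\nu^*E)$.

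The hard part will be the claim in the previous paragraph that imposing the ``matched kernels at $p_1$ and $p_2$'' condition on the irreducible Quot scheme $Q_{k,d}(\nu^*E)$ leaves an irreducible scheme of the expected codimension $k(r-k)$: this is a two-point Hecke-type statement and is exactly where $d\gg0$ is genuinely used, amounting to re-running the effective estimates underlying the smooth-curve irreducibility theorem in the presence of two fixed points over the node. The threshold $d_Q(E,k)$ is then taken larger than the bound coming from this analysis and from the $O(1)$ constants in step (i). A secondary, purely technical nuisance is the bookkeeping in (i): with the $\chi$-based definition of degree and the non-locally-free local models at the node, the dimension formulas for the torsion and node-defect strata must be verified with some care.
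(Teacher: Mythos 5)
Your argument has a genuine gap at its foundation, in step (i). You assert that for any quotient $0\to S\to E\to F\to 0$ one has $\hom(S,F)-\ext^1(S,F)=\chi(\HOM(S,F))=dr-ke+k(r-k)(1-g)$, and hence that every irreducible component of $Q_{k,d}(E)$ has dimension at least $\delta:=dr-ke+k(r-k)(1-g)$. This identity is true on a smooth curve but fails on a nodal curve: when $S$ or $F$ is not locally free at the node, the local-to-global spectral sequence contributes $H^0(\EXT^1(S,F))\neq 0$ to $\Ext^1(S,F)$ (e.g.\ $\EXT^1(\m_x,\m_x)\neq 0$), so $\hom(S,F)-\ext^1(S,F)$ is strictly smaller than $\chi(\HOM(S,F))$. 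The best lower bound one can extract (Lemma \ref{lemma hom-ext1 general} and Proposition \ref{proposition dimension bound}) is $\delta-(r-k)(2r+k)$. Consequently, even granting $\dim Z<\delta$, a component of dimension between $\delta-(r-k)(2r+k)$ and $\delta$ could lie entirely inside $Z$, and your conclusion ``(i) forces every component to meet $U$'' does not follow. This is precisely the point where the smooth-curve strategy of \cite{PR03} breaks down in the nodal case, and it is why Section \ref{section Irreduciblity of the Quot scheme} replaces the dimension count by a deformation argument: for a general point $[q]$ of a putative extra component one shows $\Ext^1(\ker q,E)=0$ (Proposition \ref{proposition Ext1(S,E) vanishinig}), deforms the inclusion $\ker q\hookrightarrow E$ through stable locally free subsheaves to one whose cokernel is free at the node (Lemma \ref{lemma general cokernel free at node}), and uses that the locus $Z^{\rm g}$ of such quotients already lies in the distinguished component $\mc P$ (Proposition \ref{Zg in P}).

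A second, related error sits inside your dimension count for the torsion strata: you bound the choice of a length-$\ell$ torsion quotient of the rank-$(r-k)$ kernel by $\ell(r-k)$ parameters, which presumes the kernel is locally free. At the node a torsion-free kernel of type $a$ admits an $\ell(r-k+a)$-dimensional family of such quotients, with $a$ as large as $r-k$; for $k\leq r/2$ this gives $2(r-k)\geqslant r$ and your stratum bound becomes $\geqslant\delta$, so the estimate $\dim Z_\ell\leqslant\delta-k\ell+O(1)$ fails. This is why the paper first proves irreducibility for $k>\tfrac r2$ (Theorem \ref{main theorem k>r/2}) and then deduces the general case by passing to $E\oplus\O_C^{\oplus t}$. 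Your step (ii) (irreducibility of the locally free locus via the normalization and a two-point ``matched kernels'' condition) is a genuinely different route from the paper's, which instead deforms an arbitrary locally free quotient to a stable one inside an irreducible family using \cite[Lemmas 2.3, 2.4]{bh06}; your version could plausibly be made to work but is left unproved at exactly the step you flag as hard, whereas the quoted deformation results make the paper's version essentially formal. As it stands, steps (i) and (ii) both need to be replaced or substantially repaired.
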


We use various results on the torsion-free sheaves on nodal curves from \cite{bh06}, in which the nodal curve is assumed to have genus $g \geqslant 2$.
For this reason, we impose the condition $g \geqslant 2$ on the genus of the curve. 

\begin{remark}
    Throughout the article and hence in Theorem \ref{theorem main}, we have considered the curve $C$ to have a single node. However we emphasize that all the results in the article can be seen to generalise easily to the case when $C$ has more than one nodes.
    So we have Theorem \ref{theorem main} for any integral projective nodal curve $C$ of arithmetic genus $g > n$, where $n$ is the number of nodes of $C$.
\end{remark}

It would be interesting to know if a similar result holds when $C$ is an integral projective curve. 
From \cite{Vak06} it is known that, in general the Hilbert schemes can be quite pathological.
In \cite{gs22}, it is proved that 
for large $d$, the quot scheme $Q_{k,d}(E)$ over a smooth curve is an integral variety which is a local complete intersection and locally factorial.
It is interesting to know what kind of singularities occur in the quot schemes over nodal curves.

We will discuss our strategy briefly. 
The strategy we use is similar to that in \cite[Section 6]{PR03},
however, some modifications and new inputs are required.
It will be convenient for the reader if we first discuss the strategy of \cite{PR03}, where the theorem is proved for smooth curves.
\begin{enumerate}[(1)]
\item\label{step upper bound} In Theorem 4.1 of \cite{PR03}, the authors compute an upper bound for the dimension of the quot scheme $Q_{k,d}(E)$ which is as follows:
$$ \dim Q_{k,d}(E) \leqslant k(r-k)+(d-m(E,k))r, \quad \textrm{ for all } d\geqslant m(E,k)\,.$$
The number $m(E,k)$ is the minimal degree of a quotient of $E$ of rank $k$.

\item\label{step stable smooth} The authors prove that the quot scheme always contains a stable locally free quotient if $d$ is large enough.
Then using the irreducibility of the moduli space of stable locally free sheaves on $C$, it is concluded that
there is a unique component of the quot scheme whose general point corresponds to a stable bundle quotient.
Let us call this component $\mc P$.
The dimension of $\mc P$ is $dr-ke+k(r-k)(1-g)$.

\item\label{step locally free smooth} The authors prove that for large $d$, a general vector bundle quotient of $E$ can be deformed to a stable vector bundle quotient.
A key ingredient for this step is the upper bound for the dimension of the quot scheme given in step \eqref{step upper bound}.
This proves the component $\mc P$ contains all the points corresponding to locally free quotients.

\item\label{step torsion locus smooth} Let $Z$ denote the locus of points in the quot scheme which correspond to quotients with torsion.
The authors show that when $d$ is large, 
$\dim Z < dr-ke+k(r-k)(1-g)$.
There is a cohomological lower bound for the dimension of any component of the quot scheme, which is $dr-ke+k(r-k)(1-g)$ in this case.
This proves that the points in $Z$ cannot be general in any component of the quot scheme.
This shows that a general point in any component corresponds to a locally free quotient and so the component has to be $\mc P$.
Thus, the quot scheme $Q_{k,d}(E)$ is irreducible when $d$ is sufficiently large.
\end{enumerate}
In the case of nodal curves, we proceed as follows.
The first two steps are very similar to that in \cite{PR03}.
However, dealing with quotients with torsion is substantially harder in the nodal case.
\begin{enumerate}[(a)]
\item\label{step stable locus} In a similar fashion, as in \cite{PR03}, we show that there is a unique component of the quot scheme whose generic point corresponds to a stable locally free quotient, when $d$ is large.
Let us call this component $\mc P$.
This component has dimension $dr-ke+k(r-k)(1-g)$.

\item\label{step torsion-free locus} Using Lemma 2.4 of \cite{bh06},
we show that for large $d$, a general locally free quotient in the quot scheme can be deformed to a stable locally free quotient.
This proves that the component $\mc P$ contains all the points corresponding to locally free quotients, when $d$ is large.
Similarly using Lemma 2.3 of \cite{bh06},
we show that for large $d$, a general torsion-free quotient in the quot scheme can be deformed to a locally free quotient.
This proves that $\mc P$ contains all the points corresponding to torsion-free quotients, when $d$ is large.
To show the existence of deformations mentioned above, some $H^1$ vanishing results are required.
These results rely on dimension estimates of few subsets of the quot schemes. 
For example, the codimension of the locus $Y_{\mu_0} = \{[E \to F] \in Q_{k,d}^{\tf}(E) : \mu_{\min}(F) <  \mu_0\}$ increases linearly with $d$ when $d \gg 0$ (Lemma \ref{lemma dimension Ymu}).
This is implicit in \cite[Lemma 6.3]{PR03}, and is also used in the proof of \cite[Lemma 4.9]{gs22}.

\item\label{step torsion locus} 
It remains to deal with the locus of quotients with torsion.
Let $Z$ denote the locus of points in the quot scheme which correspond to quotients with torsion. 
It is natural to try and follow step \eqref{step torsion locus smooth} to deal with this case. 
For this we need a good cohomological lower bound.
In the case of smooth curves, any coherent sheaf is direct sum of a locally free sheaf and a torsion sheaf.
However, for nodal curves, this is not the case.
As a result, computing a good cohomological lower bound becomes difficult.
Using \cite[Lemma 2]{EL99}, we compute the following lower bound 
\begin{equation*}\label{expected dimension nodal}
    \textrm{\textit{cohomological lower bound}} =  dr-ke+k(r-k)(1-g) - (r-k)(2r+k)\,.
\end{equation*}
Unfortunately, unlike in the case of smooth curves (step \eqref{step torsion locus smooth}), the upper bound on the dimension of $Z$ is not less than the \textit{cohomological lower bound}. 
Thus, the strategy in step \eqref{step torsion locus smooth} fails to generalize to the nodal case.

Instead, we do the following.
Let $Z^g$ denote the subset of $Z$ which contains all those quotients $[E \to F]$ such that $F$ is free at the node. Then $Z^g$ is open in $Z$.
We observe that the dimension of $Z^g$ is less than the \textit{cohomological lower bound} and using this we conclude that $Z^g$ is inside the component $\mc P$.
Define $Z^b := Z \setminus Z^g$.
We show that if $Z^b$ is open in a component say $\mc W (\neq \mc P)$ of the Quot scheme, 
then a general point of $\mc W$ is in the closure of $Z^g$.
This is a contradiction as $Z^g \subset \mc P$.
This proves that $Z^b$ actually lies in the component $\mc P$. 
To show that a general point $[q:E \to F]$ of $\mc W$ is in the closure of $Z^g$, 
ideally, we would want a deformation of quotients whose general member corresponds to a point in $Z^g$ 
and a special member corresponds to the point $[q: E \to F]$.
However, as $F$ has torsion, 
such a deformation cannot be constructed using 
the same technique as in step \eqref{step torsion-free locus}.
Rather, we deform the inclusion $\ker q \hookrightarrow E$ to an inclusion which gives a point in $Z^g$.
To construct such a deformation, we will require $\Ext^1(\ker q, E)$ to vanish for a general point  $[q: E \to F]$ in $\mc W$.
In Proposition \ref{proposition Ext1(S,E) vanishinig}, we prove that this condition is indeed satisfied if $Z^b$ is open in $\mc W$.
This proof uses dimension estimate of the subset
$X_{\mu_0} = \{ [q:E \to F] \in Q_{k,d}(E): F \text{ is torsion free and } \mu_{\max}(\ker q) > \mu_0 \}$ (Lemma \ref{lemma dimension Xmu}), 
which is obtained by applying Lemma \ref{lemma dimension Ymu} to the quot scheme $Q_{r-k,d-e}(E^\vee)$.
\end{enumerate}


\vspace{.4cm}
The organization of the paper is as follows.
In section \ref{section prelim}, we go through some preliminary lemmas regarding the sheaves on nodal curves
and then we prove the main lemma which estimates a $(\hom-\ext^1)$ term.
This lemma directly gives us the cohomological lower bound for the quot scheme which is mentioned in step $\eqref{step torsion locus}$.
In section \ref{section dimension of subsets}, we compute the dimension estimates of a few subsets of the quot scheme which will help to prove some $H^1$ and $\Ext^1$ vanishing results later.
Section \ref{section component stable locally free} is occupied by the step \eqref{step stable locus} mentioned above.
This section is self-contained and does not involve any result from the first two sections.
In section \ref{section component of torsion-free quotients}, we carry out the step \eqref{step torsion-free locus}.
In section \ref{section Irreduciblity of the Quot scheme}, we deal with the locus of points corresponding to the quotients with torsion and carry out step \eqref{step torsion locus} to prove the main theorem \ref{theorem main}.

\phantom{.}

\noindent
\textbf{Acknowledgements.} I thank Usha Bhosle, Indranil Biswas and Chandranandan Gangopadhyay for  useful discussions. 
I thank my supervisor Ronnie Sebastian for suggesting to work on this question and for several useful discussions.
The research of the author is supported by the Prime Minister's Research fellowship (PMRF ID 1301167) funded by the Ministry of Education, Government of India.

\section{Preliminaries}\label{section prelim}
\begin{definition}
Let $G$ be a sheaf of rank $r$ on a smooth projective curve $X$ over $\C$.
For each rank $k < r$, define
$$ m(G,k) = \min_{\rank(F)=k} \left\{ \deg(F) : F \textrm{ is a quotient of } G\right\}\,.$$
\end{definition}

\begin{lemma}\label{d_k semicontinuity}
    Let $X$ be a smooth projective curve and $Y$ be a projective scheme over $\C$. 
    Let $\G$ be a coherent sheaf on $X\times Y$ which is flat over $Y$.
    Then for a fixed $k < \rank(\G)$,
    the function $y \mapsto m(\G_y,k)$ is lower semicontinuous as a function from $Y$ to $\Z$.
    As a consequence the set $\{m(\G_y,k)\}_{y \in Y}$ is finite for fixed $k$.
\end{lemma}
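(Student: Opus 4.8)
The plan is to exhibit each sublevel set $\{y\in Y : m(\G_y,k)\le n\}$ as a \emph{finite} union of images of relative Quot schemes: since a relative Quot scheme with fixed Hilbert polynomial is projective over the base, each such image is closed, and closedness of all these sublevel sets is exactly lower semicontinuity. The only input that is not formal will be a lower bound for $m(\G_y,k)$ that is uniform in $y$, which is also what keeps the union finite. Throughout, $X$ is taken to be an integral projective curve (the only case we use), so that rank means the dimension of the generic stalk and degree is as in \eqref{degree definition}. First I would reduce to $Y$ connected; $\G$ being flat over $Y$, the Hilbert polynomial of $\G_y$ is then independent of $y$, hence so are $\rk\G_y=r$ and $\deg\G_y$. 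Since $r>k$, every $\G_y$ admits a rank-$k$ quotient (for instance, compose $\G_y\twoheadrightarrow\G_y/\mathrm{torsion}$ with the quotient of $\G_y/\mathrm{torsion}$ by the image of an embedding $\O_X(-M)^{\oplus(r-k)}\hookrightarrow\G_y/\mathrm{torsion}$ for $M\gg0$), so $m(\G_y,k)\in\Z$ is defined for all $y$.

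For the uniform lower bound, I would fix $N\gg0$ so that $R^1p_{Y*}(\G\otimes p_X^*\O_X(N))=0$, the sheaf $p_{Y*}(\G\otimes p_X^*\O_X(N))$ is locally free with formation commuting with base change, and the relative evaluation map $p_X^*\O_X(-N)\otimes p_Y^*p_{Y*}(\G\otimes p_X^*\O_X(N))\to\G$ is surjective (relative Serre vanishing and global generation). Restricting the last map to the fibre over any $y$ gives a surjection $\O_X(-N)^{\oplus M}\twoheadrightarrow\G_y$ with the \emph{same} $N$ for all $y$. Hence if $F$ is any rank-$k$ quotient of $\G_y$, then $F\otimes\O_X(N)$ is a quotient of $\O_X^{\oplus M}$, so it is globally generated of rank $k$; choosing $k$ sections that generate its generic stalk produces $\O_X^{\oplus k}\hookrightarrow F\otimes\O_X(N)$ with torsion cokernel, whence $\deg(F\otimes\O_X(N))\ge0$. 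Therefore $\deg F=\deg(F\otimes\O_X(N))-k\deg\O_X(N)\ge-k\deg\O_X(N)=:m_0$, a bound depending only on $\G$ and $k$; in particular $m(\G_y,k)\ge m_0$ for every $y$.

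The rest is formal. For each $d\in\Z$ let $\mathcal Q_d\to Y$ be Grothendieck's relative Quot scheme of quotients of $\G$ on $X\times Y$, relative over $Y$, with the Hilbert polynomial determined by rank $k$ and degree $d$; it is projective over $Y$, so its image $Z_d\subseteq Y$ is closed, and $y\in Z_d$ exactly when $\G_y$ has a rank-$k$, degree-$d$ quotient. Thus
\[
\{y\in Y : m(\G_y,k)\le n\}\;=\;\bigcup_{d\le n}Z_d\;=\;\bigcup_{m_0\le d\le n}Z_d,
\]
a finite union of closed sets, hence closed; as $n$ is arbitrary this is lower semicontinuity of $y\mapsto m(\G_y,k)$. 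Finally, these closed sets increase with $n$ and exhaust $Y$ (each $m(\G_y,k)$ being finite), so by Noetherianity of $Y$ they stabilize, yielding $n_0$ with $m(\G_y,k)\le n_0$ for all $y$; combined with $m(\G_y,k)\ge m_0$, the set $\{m(\G_y,k)\}_{y\in Y}$ is contained in $\{m_0,\dots,n_0\}$, which is finite.

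The only step with genuine content is the uniform lower bound $m(\G_y,k)\ge m_0$, and I expect this to be the main obstacle; the argument above handles it by twisting down a single common surjection onto all fibres and using that a globally generated sheaf on a curve has non-negative degree. Everything else is Grothendieck's projectivity of Quot schemes together with Noetherian induction.
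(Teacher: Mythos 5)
Your proof is correct and follows essentially the same route as the paper: both arguments express the sublevel sets of $y\mapsto m(\G_y,k)$ as finite unions of images of the proper relative Quot schemes $\Quot_{X\times Y/Y}(\G,k,d)$, with the finiteness coming from a uniform lower bound on $m(\G_y,k)$ obtained from a single surjection onto all fibres. The only difference is that the paper gets this lower bound by citing boundedness of the family $\{\G_y\}$ (\cite[Lemma 1.7.6]{HL}), whereas you prove it directly via relative Serre vanishing and the non-negativity of the degree of a globally generated sheaf on an integral curve.
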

\begin{proof}
    Let $l$ be an integer and define the set
    $$U = \{y \in Y : m(\G_y,k) > l\}\,.$$
    We need to prove that $U$ is open in $Y$.
    Since the family of sheaves $\{\G_y\}_{y \in Y}$ on $X$ is bounded, by \cite[Lemma 1.7.6]{HL} we have a sheaf $\B$ on $X$ such that each $\G_y$ is a quotient of $\B$.
    Now a quotient of $\G_y$ is also a quotient of $\B$.
    Hence $m(\G_y,k) \geqslant m(\B,k)$ for all $y \in Y$.
    If $l<m(\B,k)$ then $U=Y$ and we are done.
    If $m(\B,k)\leqslant l$, 
    consider the relative Quot schemes
    $\Quot_{X \times Y/Y}(\G,k,i)$ 
    for each $i$ with $m(\B,k) \leqslant i \leqslant l$.
    Let $\sigma_i : \Quot_{X \times Y/Y}(\G,k,i) \to Y$
    be the structure maps.
    Then $$U = Y \setminus \bigcup_{i=m(\B,k)}^{l} \sigma_i(\Quot_{X \times Y/Y}(\G,k,i))\,.$$
    Since each $\sigma_i$ is proper map, $U$ is open in $Y$.
    \end{proof}
    
Let $C$ be a reduced irreducible, projective curve over $\C$ of arithmetic genus $g \geqslant 2$ with 
exactly one singular point $x$.
Further we assume that the singularity at $x$ is an ordinary node.
Let $\O_{C,x}$ denote the local ring of $C$ at $x$ and $\m_x$ denote the maximal ideal of $\O_{C,x}$. 
Let $\kappa(x)$ denote the residue field of $\O_{C,x}$.
We will also use $\kappa(x)$ to mean the skyscraper sheaf on $C$ supported at the nodal point $x$.

\begin{definition}[Type of a sheaf]\label{torsion-free structure}
Let $F$ be a coherent sheaf on $C$ of rank $k$ such that the stalk $F_x$ at the node $x$ is torsion-free. 
From \cite[Proposition 2, Part 8, page 168]{Ses82}, 
we know that there exists an integer $a$ satisfying $0\leqslant a \leqslant k$ 
such that the stalk $F_x = \O_{C,x}^{\oplus k-a} \oplus \m_x^{\oplus a}$. 
We call such $F$ torsion free at node of type $a$.
If $F$ is torsion free at node of type $a$ then $\dim_\C(F \otimes \kappa(x)) = k+a$.
\end{definition}

\begin{lemma}\label{Lemma hom(B,A) - ext1(B,A) torsionfree}
    Let $A$ and $B$ be sheaves on $C$ of rank $r_A, r_B$ and degree $d_A, d_B$ respectively.
    Assume that $A$ and $B$ are 
    torsion-free of type $a$ and $b$ respectively. Then
    $$ \hom(B,A) - \ext^1(B,A) 
    = r_Bd_A - r_Ad_B + r_Ar_B(1-g) -ab\,.$$
\end{lemma}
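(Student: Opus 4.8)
The plan is to compute the Euler characteristic $\chi(B,A) := \hom(B,A) - \ext^1(B,A)$ via a Riemann--Roch / local-global argument, exploiting the fact that the only place $A$ and $B$ fail to be locally free is the node $x$, where their structure is completely pinned down by Definition \ref{torsion-free structure}. First I would recall that for any coherent sheaves $A,B$ on a projective curve $C$ one has
$$ \chi(B,A) = \sum_{i} (-1)^i \dim \Ext^i(B,A), $$
and that this sum is finite since $C$ has dimension $1$ (so $\Ext^{\geqslant 2}(B,A)=0$ when, say, $B$ has projective dimension $\leqslant 1$; one must check this, but at a node a torsion-free module has a length-one locally free resolution, so $\mathcal{E}xt^{\geqslant 2}(B,A)=0$ and the local-to-global spectral sequence collapses appropriately). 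Thus $\chi(B,A)$ is computable, and I would package it through the local-to-global spectral sequence as
$$ \chi(B,A) = \chi\big(C, \HOM(B,A)\big) - h^0\big(C,\EXT^1(B,A)\big) + h^1\big(C,\EXT^1(B,A)\big), $$
but the cleanest route is to note $\EXT^1(B,A)$ is a skyscraper sheaf at $x$ (since $B$ is locally free away from $x$), so its $h^1$ vanishes and we get $\chi(B,A) = \chi(C,\HOM(B,A)) - \len_x \EXT^1(B,A)$, where $\len_x$ denotes the length of the stalk at $x$.

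Next I would compute the two contributions. For the first, $\HOM(B,A)$ is a coherent sheaf which is locally free of rank $r_A r_B$ away from $x$; by Riemann--Roch on the (possibly singular) curve, using the degree convention \eqref{degree definition}, one gets
$$ \chi\big(C,\HOM(B,A)\big) = \deg \HOM(B,A) + r_A r_B (1-g). $$
To pin down $\deg \HOM(B,A)$ I would pass to the normalization $\pi\colon \widetilde C \to C$, or equivalently do a local computation at $x$: away from $x$ the sheaf $\HOM(B,A)$ agrees with the usual bundle of homomorphisms of degree $r_B d_A - r_A d_B$, and at $x$ there is a correction term coming from $\Hom_{\O_{C,x}}(B_x,A_x)$ not being free of the naive rank. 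A direct computation with $B_x = \O_{C,x}^{\oplus r_B-b}\oplus \m_x^{\oplus b}$ and $A_x = \O_{C,x}^{\oplus r_A-a}\oplus\m_x^{\oplus a}$, using $\Hom(\m_x,\O_{C,x}) \cong \O_{C,x}$ (as $\m_x$ is reflexive at an ordinary node and $\m_x^{\vee\vee}=\O_{C,x}$) and $\Hom(\m_x,\m_x)\cong\O_{C,x}$, $\Hom(\O_{C,x},\m_x)=\m_x$, shows that $\HOM(B,A)_x$ has colength exactly $b(r_A-a)$ inside the "expected" free module of rank $r_A r_B$ — this is where the $ab$ term will ultimately originate, after combining with the $\EXT^1$ contribution. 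For the second contribution, I would compute $\len_x\EXT^1(B,A) = \dim_\C \Ext^1_{\O_{C,x}}(B_x,A_x)$ directly from the local resolution of $B_x$: since $\m_x$ has the $2$-periodic resolution typical of a node and $\O_{C,x}$ is free, the only summands contributing are the $\Ext^1(\m_x,\O_{C,x})$ and $\Ext^1(\m_x,\m_x)$ pieces, and counting gives a clean multiple of $ab$ (or $b(r_A-a)$-type term) that, subtracted from the degree correction above, collapses to $-ab$.

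The main obstacle is the purely local computation at the node: getting the length of $\HOM(B,A)_x$ and of $\Ext^1_{\O_{C,x}}(B_x,A_x)$ exactly right. The ring $\O_{C,x}$ is $\C[[u,v]]/(uv)$, the module $\m_x$ is $(u,v)$, and one must carefully track the four types of $\Hom$ and $\Ext^1$ between $\O_{C,x}$ and $\m_x$; the subtlety is that $\m_x \otimes \m_x$, $\Hom(\m_x,\m_x)$, etc. are not what one naively expects (e.g. $\m_x\otimes_{\O_{C,x}}\m_x$ has torsion), so one should work with $\Hom$ and $\Ext$ rather than with tensor products, and double-check via the local-global $\chi$ identity. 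Once these two local lengths are in hand, assembling
$$ \chi(B,A) = \big(r_B d_A - r_A d_B - (\text{local deg correction})\big) + r_A r_B(1-g) - (\text{local }\Ext^1) $$
and verifying that (local deg correction) $+$ (local $\Ext^1$) $= ab$ finishes the proof. As a sanity check I would verify the formula when $A$ or $B$ is locally free (i.e. $a=0$ or $b=0$), where it must reduce to the usual Riemann--Roch pairing $\chi(B,A) = r_B d_A - r_A d_B + r_A r_B(1-g)$, and when $A=B$ of type $a$, where $\hom - \ext^1 = r^2(1-g) - a^2$, consistent with known deformation-theoretic dimension counts for torsion-free sheaves on nodal curves.
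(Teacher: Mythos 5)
Your reduction of the problem to the identity
$$\hom(B,A)-\ext^1(B,A)=\chi\big(C,\HOM(B,A)\big)-\dim_\C\EXT^1(B,A)_x$$
is correct and is exactly the skeleton of the paper's proof; the paper then simply quotes \cite[Lemma 2.5(B)]{bh06}, which supplies the two local facts you propose to establish by hand, namely $\ext^1(B,A)=h^1(C,\HOM(B,A))+2ab$ and $\deg\HOM(B,A)=r_Bd_A-r_Ad_B+ab$. However, several of the concrete local statements in your proposal are false, and since the whole content of the lemma lives in that local computation, this is a genuine gap. First, it is not true that $\EXT^{\geqslant 2}(B,A)=0$: over $R=\widehat{\O_{C,x}}\cong\C[[X,Y]]/(XY)$ one has $\m_x\cong R/(X)\oplus R/(Y)$, which has an infinite $2$-periodic free resolution (not a length-one one), and a direct computation gives $\Ext^2_R(\m_x,\m_x)\cong\C^2$. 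So when $a,b>0$ the alternating sum $\sum_i(-1)^i\ext^i(B,A)$ is not even a finite sum. The displayed identity survives regardless, because it only uses the exact sequence of low-degree terms $0\to H^1(\HOM(B,A))\to\Ext^1(B,A)\to H^0(\EXT^1(B,A))\to 0$ (valid since $H^2$ vanishes on a curve and $\EXT^1(B,A)$ is a skyscraper at $x$), but the justification you give is wrong and the "collapse" you invoke does not happen in degrees $\geqslant 2$.

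Second, the local $\Hom$ modules are not what you assert. The module $\m_x$ is reflexive in the sense $\m_x^{\vv}\cong\m_x$, not $\m_x^{\vv}\cong\O_{C,x}$ (this is \cite[Lemma 2.6(i)]{bh06} and is used elsewhere in the paper); correspondingly $\Hom_R(\m_x,R)\cong\m_x$, and $\Hom_R(\m_x,\m_x)\cong R/(X)\oplus R/(Y)$ is the local ring of the normalization --- again not free. As a result your claimed colength $b(r_A-a)$ is not the correct degree correction. The correct local outputs are $\ext^1_R(\m_x,\m_x)=2$, whence $\dim_\C\EXT^1(B,A)_x=2ab$, together with $\deg\HOM(B,A)=r_Bd_A-r_Ad_B+ab$; substituting these into your identity gives $(r_Bd_A-r_Ad_B+ab)+r_Ar_B(1-g)-2ab$, which is the asserted formula. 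So the route is the right one, but the local algebra at the node must be redone correctly (or, as in the paper, cited from \cite{bh06}).
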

\begin{proof}
As $A$ and $B$ both are torsion-free, using \cite[Lemma 2.5(B)]{bh06} we get that
$$\ext^1(B,A) = \h^1(C,\HOM(B,A)) + 2ab\,.$$
So we have
\begin{align*}
      \hom(B,A) - \ext^1(B,A) &= \hom(B,A) - \h^1(C,\HOM(B,A)) - 2ab\\
      & =\h^0(C,\HOM(B,A))-\h^1(C,\HOM(B,A)) - 2ab\\
      & = \chi(\HOM(B,A)) -2ab.
\end{align*}
Again \cite[Lemma 2.5(B)]{bh06} says that the degree of the sheaf $\HOM(B,A)$ is 
$$r_Bd_A - r_Ad_B + ab\,.$$
So using \eqref{degree definition}, we have
\begin{equation*}
\begin{split}
    \hom(B,A) - \ext^1(B,A) 
    &= \chi(\HOM(B,A)) -2ab \\
    &= r_Bd_A - r_Ad_B + ab+r_Ar_B(1-g) -2ab\\
    &= r_Bd_A - r_Ad_B + r_Ar_B(1-g)-ab\,.
    \end{split}
\end{equation*}
\end{proof}

\begin{lemma}\label{lemma type of kernel}
Let 
$$ 0 \to B \to E \to A \to 0$$
be a short exact seqeunce of torsion-free sheaves and $E$ be locally free.
Then type of $A$ and type of $B$ are equal.
\end{lemma}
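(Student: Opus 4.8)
The type of a torsion-free sheaf is a condition on its stalk at the node $x$, so the plan is to reduce the statement to a computation over the local ring $\O := \O_{C,x}$, with maximal ideal $\m := \m_x$ and residue field $\kappa(x)$. Put $r = \rk E$ and $k = \rk A$, so that $\rk B = r-k$, and let $a$ and $b$ be the types of $A$ and $B$ respectively. Localizing the given sequence at $x$ produces a short exact sequence of finitely generated $\O$-modules
$$0 \longrightarrow B_x \longrightarrow E_x \longrightarrow A_x \longrightarrow 0$$
in which $E_x$ is free, since $E$ is locally free, and hence flat.

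First I would tensor this sequence with $\kappa(x)$ over $\O$. Flatness of $E_x$ gives $\Tor_1^{\O}(E_x,\kappa(x)) = 0$, so the long exact $\Tor$-sequence degenerates to a four-term exact sequence of finite-dimensional $\C$-vector spaces
$$0 \longrightarrow \Tor_1^{\O}(A_x,\kappa(x)) \longrightarrow B_x\otimes_{\O}\kappa(x) \longrightarrow E_x\otimes_{\O}\kappa(x) \longrightarrow A_x\otimes_{\O}\kappa(x) \longrightarrow 0\,.$$
By Definition \ref{torsion-free structure} the last three spaces have $\C$-dimensions $(r-k)+b$, $r$ and $k+a$; equating alternating sums of dimensions forces $\dim_{\C}\Tor_1^{\O}(A_x,\kappa(x)) = a+b$.

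Next I would compute the same $\Tor$-group from the structure of $A_x$. Since $A_x\cong\O^{\oplus(k-a)}\oplus\m^{\oplus a}$ and $\Tor$ commutes with finite direct sums, $\Tor_1^{\O}(A_x,\kappa(x)) \cong \Tor_1^{\O}(\m,\kappa(x))^{\oplus a}$, so the whole lemma comes down to the single local fact $\dim_{\C}\Tor_1^{\O}(\m,\kappa(x)) = 2$. This is the one place where the node is genuinely used: one has $\O\cong\C[[u,v]]/(uv)$ with $\m = (u,v)$, and the minimal free resolution of $\m$ is $2$-periodic,
$$\cdots \longrightarrow \O^{\oplus 2} \xrightarrow{\ \mathrm{diag}(u,v)\ } \O^{\oplus 2} \xrightarrow{\ \mathrm{diag}(v,u)\ } \O^{\oplus 2} \xrightarrow{\ (u\ \ v)\ } \m \longrightarrow 0\,,$$
every differential having all entries in $\m$; tensoring with $\kappa(x)$ therefore annihilates every differential and gives $\Tor_1^{\O}(\m,\kappa(x))\cong\kappa(x)^{\oplus 2}$. (Equivalently, $\Tor_1^{\O}(\m,\kappa(x))\cong\Tor_2^{\O}(\kappa(x),\kappa(x))$, which is $2$-dimensional by the periodic resolution of $\kappa(x)$ over a node.)

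Combining the two computations, $a+b = \dim_{\C}\Tor_1^{\O}(A_x,\kappa(x)) = a\cdot\dim_{\C}\Tor_1^{\O}(\m,\kappa(x)) = 2a$, so $a=b$, which is the assertion. The only step that is not bare dimension-counting is the local computation $\dim_{\C}\Tor_1^{\O}(\m,\kappa(x))=2$ --- equivalently, identifying the first syzygy module of $\m$ over the node --- so I would expect that to be the crux; note also that the running hypothesis $g\geqslant 2$ plays no role in this particular lemma.
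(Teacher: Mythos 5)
Your proof is correct and follows essentially the same route as the paper: tensor the sequence with $\kappa(x)$, use local freeness of $E$ to get the four-term exact sequence beginning with $\Tor^1(A,\kappa(x))$, count dimensions, and evaluate $\Tor^1(\m_x,\kappa(x))$ at the node. The only difference is that you justify $\dim_\C\Tor^1(\m_x,\kappa(x))=2$ via the periodic resolution, which the paper simply asserts.
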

\begin{proof}
Let the type of $A$ be $a$ and the type of $B$ be $b$.
Tensoring the exact sequence
$$ 0 \to B \to E \to A \to 0$$
with $\kappa(x)$, we get a sequence
    $$0 \to \Tor^1(A,\kappa(x)) \to B\vert_x \to E\vert_x \to  A\vert_x \to 0\,.$$
Now $\dim_\C(A\vert_x)= \rk(A)+a$ and similarly $\dim_\C(B\vert_x)= \rk(B)+b$.
    So $b = \dim_\C \Tor^1(A,\kappa(x)) -a$.
    Also $\Tor^1(A,\kappa(x)) = \Tor^1(\m_x^{\oplus a},\kappa(x)) = 2a$.
    It follows that $b=a$.
\end{proof}

For a torsion sheaf $\tau$ on $C$,  we define the socle of $\tau_x$ as
$\soc(\tau_x) := \Hom_C(\kappa(x),\tau)$.
\begin{lemma}\label{lemma dimension of socle}
    Let $0<k<r$.
    Let $A$ and $B$ be two torsion-free sheaves of
    rank $r-k$.
    Let type of $A$ be $a$ and type of $B$ be $b$.
    Let $\tau$ be a torsion sheaf on $C$ supported at the node $x$, which fits into a short exact sequence
    \begin{equation}\label{ 0 to B to A to tau}
         0 \to B \to A \to \tau \to 0\,.
    \end{equation}
    Then $\dim(\soc(\tau)) \leqslant r-k+2a+b$.
\end{lemma}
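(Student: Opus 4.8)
The plan is to localise the problem at the node and reduce it to a single application of the long exact $\Ext$-sequence. First I would apply $\Hom_C(\kappa(x),-)$ to the given short exact sequence $0\to B\to A\to\tau\to0$. Since $A$ and $B$ are torsion-free they carry no nonzero torsion subsheaf, so $\Hom_C(\kappa(x),A)=\Hom_C(\kappa(x),B)=0$, and the long exact sequence collapses to an injection
\[
    \soc(\tau)\;=\;\Hom_C(\kappa(x),\tau)\;\hookrightarrow\;\Ext^1_C(\kappa(x),B).
\]
Because $\kappa(x)$ is a skyscraper sheaf, the local-to-global spectral sequence degenerates and $\Ext^1_C(\kappa(x),B)=\Ext^1_{\O_{C,x}}(\kappa(x),B_x)$, so everything now takes place over the local ring $R:=\O_{C,x}$ of the node, and it suffices to show $\dim_\C\Ext^1_R(\kappa(x),B_x)\le r-k+b$; the extra slack $2a\ge0$ then yields the statement.

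For the key computation I would use the type decomposition $B_x\cong\O_{C,x}^{\oplus(r-k-b)}\oplus\m_x^{\oplus b}$ from Definition \ref{torsion-free structure}. Applying $(-)^{\oplus b}$ to the structure sequence $0\to\m_x\to\O_{C,x}\to\kappa(x)\to0$ and summing with a free module produces a short exact sequence
\[
    0\longrightarrow B_x\longrightarrow \O_{C,x}^{\oplus(r-k)}\longrightarrow\kappa(x)^{\oplus b}\longrightarrow0.
\]
Feeding this into $\Hom_R(\kappa(x),-)$ and using that $\O_{C,x}$ is torsion-free, so $\Hom_R(\kappa(x),\O_{C,x}^{\oplus(r-k)})=0$, one gets
\[
    \dim_\C\Ext^1_R(\kappa(x),B_x)\;\le\;b\cdot\dim_\C\Hom_R(\kappa(x),\kappa(x))+(r-k)\cdot\dim_\C\Ext^1_R(\kappa(x),\O_{C,x}).
\]
Now $\Hom_R(\kappa(x),\kappa(x))\cong\kappa(x)$, and since the local ring of an ordinary node is a one-dimensional Gorenstein ring, $\Ext^1_R(\kappa(x),\O_{C,x})\cong\kappa(x)$ as well; both are one-dimensional. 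Hence $\dim_\C\Ext^1_R(\kappa(x),B_x)\le(r-k)+b$, and combining with the injection above, $\dim\soc(\tau)\le(r-k)+b\le(r-k)+2a+b$.

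There is no deep obstacle here; once the reduction to the node is made the argument is a diagram chase. The one fact doing real work is the Gorenstein property of the node, which gives $\Ext^1_R(\kappa(x),\O_{C,x})\cong\kappa(x)$ (equivalently, this can be extracted from the two-periodic free resolution of $\kappa(x)$ over $\C[[u,v]]/(uv)$, or from local duality at the node); the points needing care are the vanishings $\Hom_C(\kappa(x),A)=\Hom_C(\kappa(x),B)=0$ — which is precisely torsion-freeness — and the passage $\Ext^1_C(\kappa(x),B)=\Ext^1_{\O_{C,x}}(\kappa(x),B_x)$. I would also note in passing that this route actually proves the sharper bound $\dim\soc(\tau)\le(r-k)+b$, which does not involve the type $a$ of $A$ at all; the term $2a$ in the statement is harmless slack.
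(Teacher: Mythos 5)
Your proof is correct, and it takes a genuinely different route from the paper's. The paper argues by counting generators: it presents $A_x$ by a free module of rank $r-k+a$, bounds the number of generators of the kernel of the induced surjection onto $\tau$ using the types of $B$ and of $\ker(\O_{C,x}^{\oplus r-k+a}\to A_x)$ (via Lemma \ref{lemma type of kernel}), lifts the presentation to a smooth surface containing a local model of the node, and then invokes \cite[Lemma 2]{EL99} to convert the generator count into a socle bound; that detour is where the term $2a$ enters. Your argument stays entirely at the node: the connecting map $\soc(\tau)=\Hom(\kappa(x),\tau)\hookrightarrow\Ext^1(\kappa(x),B)$ is injective because $\Hom(\kappa(x),A)=0$ for torsion-free $A$, and the target is controlled by the type decomposition $B_x\cong\O_{C,x}^{\oplus(r-k-b)}\oplus\m_x^{\oplus b}$ together with $\Ext^1_{\O_{C,x}}(\kappa(x),\O_{C,x})\cong\kappa(x)$, which indeed holds because the node is a hypersurface singularity, hence a one-dimensional Gorenstein local ring (it can also be read off from the two-periodic resolution of modules over $\C[[X,Y]]/(XY)$ that the paper writes down inside the proof of Lemma \ref{lemma hom-ext1 general}). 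The two steps needing justification are both fine: torsion-freeness of $A_x$ and $B_x$ kills $\Hom(\kappa(x),-)$ since sections of a torsion-free module cannot be annihilated by the maximal ideal, and the identification $\Ext^1_C(\kappa(x),B)=\Ext^1_{\O_{C,x}}(\kappa(x),B_x)$ follows from the same degeneration of the local-to-global spectral sequence used just before Lemma \ref{lemma hom-ext1 general}, the first argument now being the sheaf supported at $x$ (alternatively, localize the short exact sequence at $x$ before applying $\Hom$ and avoid global $\Ext$ altogether). What your approach buys is economy and a sharper constant: one even has equality $\ext^1_{\O_{C,x}}(\kappa(x),\m_x)=2$ and $\ext^1_{\O_{C,x}}(\kappa(x),\O_{C,x})=1$, so $\dim\soc(\tau)\le\ext^1(\kappa(x),B)=r-k+b$, independent of the type of $A$, with no appeal to \cite{EL99}. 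Since $r-k+b\le r-k+2a+b$, the stated lemma follows; the improvement would only tighten the constant $(r-k)(2r+k)$ appearing in Lemma \ref{lemma hom-ext1 general} and Proposition \ref{proposition dimension bound}, and changes nothing qualitative downstream.
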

\begin{proof}
Localizing the short exact sequence 
\eqref{ 0 to B to A to tau} at the node $x$,
we get the short exact sequence
$$0 \to B_x \to A_x \to \tau \to 0\,. $$
As $A_x = \O_{C,x}^{\oplus r-k-a} \oplus \m_x^{\oplus a}$,
so there is a surjection $\vp:\O_{C,x}^{\oplus r-k+a} \to A_x$.
Let $\psi :\O_{C,x}^{\oplus r-k+a} \to \tau$ denote the composition 
of $\vp$ and the quotient $A \to \tau \to 0$.
Let $K$ be the kernel of $\vp$ and $M$ be the kernel $\psi$.
So we have the following commutative diagram of $\O_{C,x}$-modules, whose rows and columns are exact
\begin{equation}\label{}
    \begin{tikzcd}
    & 0 \arrow{d}{} & 0 \arrow{d}{}\\
    &K \ar[r,-,double equal sign distance,double] \arrow{d}{} & K \arrow{d}{}\\
    0 \arrow{r}{} & M \arrow{r}{} \arrow{d}{} & \O_{C,x}^{\oplus r-k+a} \arrow{d}{\vp} \arrow{r}{\psi} & \tau \ar[d,-,double equal sign distance,double] \arrow{r}{} & 0 \\
    0 \arrow{r}{} & B_x \arrow{r}{} \arrow{d}{} & A_x \arrow{r}{} \arrow{d}{} & \tau \arrow{r}{} & 0 \\
    & 0  & 0 \,. \\
    \end{tikzcd}
\end{equation}
The sheaf $B$ is of rank $r-k$ and type $b$.
So $B_x$ can be generated by $r-k+b$ elements.
By Lemma \ref{lemma type of kernel}, $K$ is of rank $a$ and type $a$.
So $K$ can be generated by $2a$ elements.
From the left vertical exact sequence it is clear that 
the $M$ can be generated by $r-k+b+2a$ elements.

We note that the socle of $\tau$ depends only on the module structure over the completion $\widehat{\O_{C,x}}$.
We know that the completion $\widehat{\O_{C,x}}$ is isomorphic to $\C[[X,Y]]/(XY)$.
So for the purpose of computing $\dim(\soc(\tau))$,
we can replace $C$ by the curve $C'$ defined by the vanishing of the polynomial $\wt X \wt Y$
in the plane $S:=\mb P^2_{\C} = \text{Proj}(\C[\wt X,\wt Y,\wt Z])$.
Thus we have a short exact sequence
$$ 0 \to \O_{S,x} \to \O_{S,x} \to \O_{C',x} \to 0\,.$$
Using this we can construct a commutative diagram whose 
rows and columns are exact
\begin{equation}\label{}
    \begin{tikzcd}
    & 0 \arrow{d}{} & 0 \arrow{d}{}\\
    & \O_{S,x}^{\oplus r-k+a} \ar[r,-,double equal sign distance,double] \arrow{d}{} & \O_{S,x}^{\oplus r-k+a} \arrow{d}{}\\
    0 \arrow{r}{} & N \arrow{r}{} \arrow{d}{} & \O_{S,x}^{\oplus r-k+a} \arrow{d}{} \arrow{r}{} & \tau \ar[d,-,double equal sign distance,double] \arrow{r}{} & 0 \\
    0 \arrow{r}{} & M \arrow{r}{} \arrow{d}{} & \O_{C',x}^{\oplus r-k+a} \arrow{r}{\psi} \arrow{d}{} & \tau \arrow{r}{} & 0 \\
    & 0  & 0 \,. \\
    \end{tikzcd}
\end{equation}
Similarly from this diagram we get that $N$
can be generated by $(r-k+b+2a) + (r-k+a) = 2(r-k)+3a+b$ elements.
We view the surjective map $\O_{S,x}^{\oplus r-k+a} \to \tau$ as a point in the Quot scheme of torsion quotients of $\O_{S}^{\oplus r-k+a}$
on the surface $S$.
Then using \cite[Lemma 2]{EL99}, we conclude that 
$\dim (\soc(\tau)) \leqslant (2(r-k)+3a+b) - (r-k+a) = r-k+2a+b$.
\end{proof}

Recall that for a local ring $R$ and two $R$-modules $M$ and $N$, we have the $R$-modules $\Ext^i_R(M,N)$.
When $N$ has finite length, the module $\Ext^i_R(M,N)$ has finite length.
We denote the length of $\Ext^i_R(M,N)$ by $\ext^i_R(M,N)$.
Let us take $R$ to be the local ring $\O_{C,x}$.
Let $\tau$ be a torsion sheaf over $C$ supported at the point $x$ and $B$ be any sheaf on $C$.
Then we have defined the numbers $\ext^i_R(B_x,\tau)$.
On the other hand, $\ext^i(B,\tau)$ is the dimension of the $\C$-vector space $\Ext^i_C(B,\tau)$.
In the next lemma we will use the fact that 
$\ext^i(B,\tau) = \ext^i_R(B_x,\tau)$ for all $i$.
This can be seen as follows.
Consider the local-to-global spectral sequence
$$ E_2^{p,q}=H^p(C,\EXT^q(B,\tau)) \implies \Ext_C^{p+q}(B,\tau)\,.$$
As $\tau$ is supported at the point $x$, the sheaves $\EXT^i(B,\tau)$ are also supported at $x$.
Consequently $E_2^{p,q}=H^p(C,\EXT^q(B,\tau)) =0$ for any $p \neq 0$ and for any $q$.
It follows that 
$$E_{\infty}^{p,q}=
\begin{cases}
    H^0(C,\EXT^q(B,\tau)) \quad & \text{if }p=0 \, ,\\
    0 & \text{otherwise} \,.
\end{cases}
$$
Now for any $i\geqslant 0$ it is clear that 
$\Ext^i(B,\tau)=E^{0,i}_{\infty}=H^0(C,\EXT^i(B,\tau))$.
On the other hand, as $\EXT^i(B,\tau)$ is supported at the point $x$, so $H^0(C,\EXT^i(B,\tau)) = \Ext^i_R(B_x,\tau)$.
This proves that $\ext^i(B,\tau) = \ext^i_R(B_x,\tau)$ for any $i\geqslant 0$.

\begin{lemma}\label{lemma hom-ext1 general}
    Let $0<k<r$.
    Let $E$ be a locally free sheaf of rank $r$ and degree $e$
    and let $A$ be a sheaf of rank $k$ and degree $d$.
    Assume that 
    $0 \to B \to E \to A \to 0$ is a short exact sequence of sheaves on $C$.
    Then we have
    $$ \hom(B,A) - \ext^1(B,A) \geqslant dr-ke+k(r-k)(1-g)-(r-k)(2r+k) \,.$$
\end{lemma}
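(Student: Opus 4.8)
The plan is to reduce to the torsion-free case treated in Lemma~\ref{Lemma hom(B,A) - ext1(B,A) torsionfree} by splitting off the torsion subsheaf of $A$, and then to absorb the two correction terms that arise using the socle estimate of Lemma~\ref{lemma dimension of socle} together with a local $\Ext$ computation at the node. Let $\tau \subseteq A$ be the torsion subsheaf, put $\bar A := A/\tau$ (torsion-free of rank $k$, say of type $a$), and write $\ell := \len(\tau)$. Applying $\Hom(B,-)$ to $0 \to \tau \to A \to \bar A \to 0$ and chasing the resulting long exact sequence — the rank of the connecting map $\Hom(B,\bar A) \to \Ext^1(B,\tau)$ enters with opposite signs on the two sides and cancels — gives
$$ \hom(B,A) - \ext^1(B,A) \ \geqslant\ \big(\hom(B,\bar A) - \ext^1(B,\bar A)\big) \ +\ \big(\hom(B,\tau) - \ext^1(B,\tau)\big)\,. $$
Here $B = \ker(E \to A)$, being a subsheaf of the locally free sheaf $E$, is torsion-free of rank $r-k$; let $b$ be its type. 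Since $\deg B = e - d$ and $\deg \bar A = d - \ell$, Lemma~\ref{Lemma hom(B,A) - ext1(B,A) torsionfree} gives
$$ \hom(B,\bar A) - \ext^1(B,\bar A) \ =\ dr - ke + k(r-k)(1-g) - (r-k)\ell - ab\,. $$
So it suffices to prove the two inequalities
$$ \hom(B,\tau) - \ext^1(B,\tau) \ \geqslant\ (r-k)\ell - b\,\dim\soc(\tau_x) \qquad\text{and}\qquad ab + b\,\dim\soc(\tau_x) \ \leqslant\ (r-k)(2r+k)\,, $$
where $\tau_x$ denotes the part of $\tau$ supported at the node.

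For the first inequality I would split $\tau = \tau_x \oplus \tau'$ into its parts at the node and at the smooth points of $C$. Over the smooth locus $B$ is locally free of rank $r-k$, so $\hom(B,\tau') - \ext^1(B,\tau') = (r-k)\len(\tau')$ exactly. At the node, using $\ext^i(B,\tau_x) = \ext^i_R(B_x,\tau_x)$ (the spectral sequence remark preceding this lemma) with $R = \O_{C,x}$ and the local structure $B_x \cong R^{\oplus(r-k-b)} \oplus \m_x^{\oplus b}$, everything reduces to bounding $\hom_R(\m_x,\tau_x) - \ext^1_R(\m_x,\tau_x)$. Decomposing $\m_x \cong \mf p_1 \oplus \mf p_2$ as a direct sum of the two (principal) branch ideals and using the eventually $2$-periodic minimal free resolutions of $\mf p_1$ and $\mf p_2$ over the hypersurface ring $R$, one computes
$$ \hom_R(\m_x,\tau_x) - \ext^1_R(\m_x,\tau_x) \ =\ 2\len(\tau_x) - \dim\tau_x[\mf p_1] - \dim\tau_x[\mf p_2]\,, $$
where $\tau_x[\mf p_i] := \{v \in \tau_x : \mf p_i v = 0\}$; and since $\tau_x[\mf p_1] \cap \tau_x[\mf p_2] = \soc(\tau_x)$, the right-hand side is $\geqslant \len(\tau_x) - \dim\soc(\tau_x)$. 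Plugging $B_x = R^{\oplus(r-k-b)}\oplus\m_x^{\oplus b}$ back in and adding the smooth contribution then gives $\hom(B,\tau) - \ext^1(B,\tau) \geqslant (r-k)\len(\tau_x) + (r-k)\len(\tau') - b\,\dim\soc(\tau_x) = (r-k)\ell - b\,\dim\soc(\tau_x)$.

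For the second inequality, set $A_1 := A/\tau_x$; it is torsion-free at the node and has the same stalk there as $\bar A$, hence is of type $a$ at the node, and $B_1 := \ker(E \to A_1)$ is torsion-free of rank $r-k$ and — by the local argument in the proof of Lemma~\ref{lemma type of kernel} — of type $a$ as well. The short exact sequence $0 \to B \to B_1 \to \tau_x \to 0$ then puts $\tau_x$ in the situation of Lemma~\ref{lemma dimension of socle}, which gives $\dim\soc(\tau_x) \leqslant (r-k) + 2a + b$. Therefore, using $0 \leqslant a \leqslant k$ and $0 \leqslant b \leqslant r-k$,
$$ ab + b\,\dim\soc(\tau_x) \ \leqslant\ b\big(3a + (r-k) + b\big) \ \leqslant\ (r-k)\big(3k + 2(r-k)\big) \ =\ (r-k)(2r+k)\,, $$
and combining this with the two displayed formulas above finishes the proof.

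The step I expect to be the main obstacle is the local $\Ext$ computation at the node — getting a handle on $\hom_R(\m_x,\tau_x) - \ext^1_R(\m_x,\tau_x)$ precise enough to beat $\len(\tau_x) - \dim\soc(\tau_x)$ — together with the bookkeeping of the various types (of $\bar A$, $B$, $B_1$ and $\tau_x$) needed to invoke Lemmas~\ref{Lemma hom(B,A) - ext1(B,A) torsionfree} and~\ref{lemma dimension of socle} in exactly the forms used above.
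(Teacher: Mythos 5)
Your proposal is correct and follows essentially the same route as the paper: split off the torsion via $0\to\tau\to A\to\bar A\to 0$, apply Lemma \ref{Lemma hom(B,A) - ext1(B,A) torsionfree} to the torsion-free part, reduce the node contribution to $\hom_R(\m_x,\tau_x)-\ext^1_R(\m_x,\tau_x)$ via the $2$-periodic resolution over $\C[[X,Y]]/(XY)$, and control the resulting defect by the socle bound of Lemma \ref{lemma dimension of socle}. The only (cosmetic) differences are that the paper bounds $\dim\im\big(\mathrm{diag}(X,Y)\big)$ by precomposing with the diagonal rather than by your identity $2\len(\tau_x)-\dim\tau_x[\mf p_1]-\dim\tau_x[\mf p_2]$, and that it feeds the socle lemma the sequence $0\to B\to \ker(E\to A/\tau)\to\tau\to 0$ instead of your $0\to B\to B_1\to\tau_x\to 0$; both yield the same estimate.
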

\begin{proof}
    Let $\tau$ be the torsion subsheaf of $A$ and 
    $A'$ be the torsion-free quotient 
    so that we have a short exact sequence
    \begin{equation}\label{equation 1st ses for tau}
    0 \to \tau \to A \to A' \to 0\,.
    \end{equation}
    Let the length of $\tau$ be $\delta$.
    We separate the torsion sheaf $\tau$ in two parts.
    Let $\tau_x$ denote the torsion at the node $x$ and $\tau_o$ denote the torsion outside $x$.
    Thus, we have $\tau = \tau_x \oplus \tau_o$.
    Let length$(\tau_x)=\delta_x$ and length$(\tau_o)=\delta_o$.
    So $\delta_x + \delta_o = \delta$.
    Let $B'$ be the kernel of the surjection $E \to A'\to 0$.
    By Lemma \ref{lemma type of kernel}, type of $B'$ is same as type of $A'$.
    It is easy to check that there is a short exact sequence
    \begin{equation}\label{equation 2nd ses for tau}
    0 \to B \to B' \to \tau \to 0\,.
    \end{equation}
    Applying $\Hom(B,\_)$ to the short exact sequence \eqref{equation 1st ses for tau}
    we get,
    \begin{equation}\label{equation hom-ext separation}
        \begin{aligned}
        \hom&(B,A) - \ext^1(B,A) \\
        & \geqslant \hom(B,A') - \ext^1(B,A')  +
        \hom(B,\tau_o) - \ext^1(B,\tau_o) + 
        \hom(B,\tau_x) - \ext^1(B,\tau_x)\,.
        \end{aligned}
    \end{equation}
Next we compute $(\hom(B,A')-\ext^1(B,A'))$, $(\hom(B,\tau_o)-\ext^1(B,\tau_o))$ and $(\hom(B,\tau_x)-\ext^1(B,\tau_x))$.
The sheaves $A'$ and $B$ are torsion-free.
Let the type of $A'$ be $a'$ and let the type of $B$ be $b$.
Degree of $A'$ is $d-\delta$ and degree of $B$ is $e-d$.
By Lemma \ref{Lemma hom(B,A) - ext1(B,A) torsionfree}, we get that
\begin{equation}\label{equation hom -ext1 torsion-free}
 \hom(B,A') - \ext^1(B,A') = dr-ke+k(r-k)(1-g)-a'b- \delta(r-k) \,.   
\end{equation}
As $B$ is free on the support of $\tau_o$, 
so $\hom(B,\tau_o) = (r-k)\delta_o$ and $\ext^1(B,\tau_o) = 0$.
So we have
\begin{equation}\label{equation hom-ext1 torsion outside node}
\hom(B,\tau_o) - \ext^1(B,\tau_o) = (r-k)\delta_o \,.
\end{equation}
Now we compute $\hom(B,\tau_x)-\ext^1(B,\tau_x)$.
Let $R$ denote the local ring $\O_{C,x}$ and $\m_x$ denote the maximal ideal of $R$.
By the discussion before the lemma, we have 
$\hom(B,\tau_x) - \ext^1(B,\tau_x) =
    \hom_R(B_x,\tau_x) - \ext^1_R(B_x,\tau_x)$.
Thus
\begin{equation}\label{equation hom-ext1 torsion at node}
\begin{aligned}
    \hom(B,\tau_x) - \ext^1(B,\tau_x) &= 
    \hom_R(B_x,\tau_x) - \ext^1_R(B_x,\tau_x) \\
    &= \hom_R(R^{\oplus (r-k-b)} \oplus \m^{\oplus b},\tau_x) - \ext^1_R(R^{\oplus(r-k-b)} \oplus \m^{\oplus b},\tau_x)\\
    &= (r-k-b)[\hom_R(R,\tau_x)-\ext^1_R(R,\tau_x)] \\
    & \quad + b[\hom_R(\m_x,\tau_x)-\ext^1_R(\m_x,\tau_x)] \\
    & = (r-k-b)\delta_x + b[\hom_R(\m_x,\tau_x)-\ext^1_R(\m_x,\tau_x)]\,.
\end{aligned}    
\end{equation}

Let $\widehat R$ denote the completion of $R$ with respect to the maximal ideal $\m_x$.
As $\tau$ is also an $\widehat R$-module,
so $$\hom_R(\m_x,\tau_x)-\ext^1_R(\m_x,\tau_x) = \hom_{\widehat R}(\widehat \m_x,\tau_x)-\ext^1_{\widehat R}(\widehat \m_x,\tau_x) \,. $$
We know that $\widehat R$ is isomorphic to $\C[[X,Y]]/(XY)$.
So we have a projective resolution of $\widehat \m_x$,
$$ \dots \to \widehat R^2 \xrightarrow{\begin{pmatrix}
X & 0 \\
0 & Y
\end{pmatrix}} \widehat R^2 \xrightarrow{\begin{pmatrix}
Y & 0 \\
0 & X
\end{pmatrix}} \widehat R^2 
\longrightarrow \widehat \m_x \to 0 \,.$$
Applying $\Hom(\_,\tau_x)$ to the complex obtained by dropping $\widehat \m_x$, we get the complex
$$ \dots \xleftarrow{} \tau_x^2 \xleftarrow{\phi_2 = \begin{pmatrix}
X & 0 \\
0 & Y
\end{pmatrix}}
\tau_x^2 \xleftarrow{\phi_1 = \begin{pmatrix}
Y & 0 \\
0 & X
\end{pmatrix}}
\tau_x^2 \xleftarrow{} 0 \,.
$$
It is easily checked that
$$ \hom_{\widehat R}(\widehat{\m},\tau_x) - \ext^1_{\widehat R}(\widehat{\m},\tau_x) = \dim(\im (\phi_2))\,.$$
Let $\Delta : \tau_x \to \tau_x^{\oplus 2}$ denote the diagonal map 
and let $\phi:= \phi_2 \circ \Delta$ denote the composite
$\tau_x \xrightarrow{\Delta} \tau_x^{\oplus 2}
\xrightarrow{\phi_2} \tau_x^{\oplus 2}$.
Then $\dim (\im (\phi_2)) \geqslant \dim (\im (\phi))$.
The kernel of $\phi$ is the subspace of $\tau_x$ which is annihilated by $\widehat \m_x$ and hence
$\ker(\phi) = \soc(\tau_x)$.
As $\tau$ fits in the short exact sequence \eqref{equation 2nd ses for tau}, using Lemma \ref{lemma dimension of socle}
we get that $\dim(\ker(\phi)) = \dim(\soc(\tau_x)) \leqslant r-k+2a'+b$, 
as type of $B'$ is $a'$.
So $\dim (\im(\phi)) \geqslant \delta_x - (r-k+2a'+b)$.
Thus we have 
$$\hom_{\widehat R}(\widehat{\m},\tau_x) - \ext^1_{\widehat R}(\widehat{\m},\tau_x) = \dim(\im (\phi_2)) 
\geqslant \dim(\im (\phi)) \geqslant \delta_x - (r-k+2a'+b)\,.$$
Using \eqref{equation hom-ext1 torsion at node}, we get
\begin{equation}
    \begin{aligned}
 \hom(B,\tau_x) - \ext^1(B,\tau_x)  
    & = (r-k-b)\delta_x + b[\hom_R(\m_x,\tau_x)-\ext^1_R(\m_x,\tau_x)] \\
   &  \geqslant (r-k-b)\delta_x + b(\delta_x - (r-k+2a'+b))  \\
   & \geqslant (r-k)\delta_x - b(r-k+2a'+b) \,.
    \end{aligned}
\end{equation}
Combining \eqref{equation hom-ext separation}, \eqref{equation hom -ext1 torsion-free}, \eqref{equation hom-ext1 torsion outside node} and \eqref{equation hom-ext1 torsion at node},
we have
\begin{align*}
    &\hom(B,A) - \ext^1(B,A) \\
    & \geqslant [dr-ke+k(r-k)(1-g)-a'b- \delta(r-k)] + [(r-k)\delta_o]+ [(r-k)\delta_x - b(r-k+2a'+b)]\\
    &= dr-ke+k(r-k)(1-g) - b(r-k+3a'+b)\,.
\end{align*}
Note that $a'\leqslant k$ and $b \leqslant (r-k)$.
Thus we have
$$ \hom(B,A) - \ext^1(B,A) \geqslant dr-ke+k(r-k)(1-g) - (r-k)(2r+k)\,.$$
\end{proof}

\section{Dimensions of some subsets of the Quot scheme}
\label{section dimension of subsets}
Let $E$ be a vector bundle on $C$ of rank $r$ and degree $e$.  

\begin{proposition}\label{proposition dimension bound}
    Any irreducible component of the quot scheme $Q_{k,d}(E)$ has dimension at least
    $$dr-ke+k(r-k)(1-g) - (r-k)(2r+k)\,.$$
\end{proposition}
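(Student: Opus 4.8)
The plan is to read the bound off from the deformation theory of the Quot scheme together with the estimate already established in Lemma \ref{lemma hom-ext1 general}. Let $\mc W$ be an irreducible component of $Q_{k,d}(E)$. Since $Q_{k,d}(E)$ has only finitely many irreducible components and none of the others contains $\mc W$, the union of the remaining components meets $\mc W$ in a proper closed subset; I can therefore choose a closed point $[q]\in\mc W$ that lies on no other component. Write $q\colon E\twoheadrightarrow A$ for the corresponding quotient, with $A$ of rank $k$ and degree $d$, and set $B=\ker q$, so that $0\to B\to E\to A\to 0$ is a short exact sequence of sheaves on $C$.

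Next I would invoke the standard local description of the Quot scheme at $[q]$: the Zariski tangent space is $\Hom(B,A)$ and the obstructions to deforming $[q]$ lie in $\Ext^1(B,A)$, so the complete local ring $\widehat{\mc O}_{Q_{k,d}(E),[q]}$ is a quotient of a formal power series ring in $\hom(B,A)$ variables by an ideal generated by at most $\ext^1(B,A)$ elements. By Krull's height theorem, every minimal prime over this ideal has height at most $\ext^1(B,A)$, so every irreducible component of $Q_{k,d}(E)$ through $[q]$ has dimension at least $\hom(B,A)-\ext^1(B,A)$; this is the usual cohomological lower bound. By the choice of $[q]$, the only such component is $\mc W$ itself, whence $\dim\mc W\geqslant\hom(B,A)-\ext^1(B,A)$.

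Finally, since $E$ is locally free, Lemma \ref{lemma hom-ext1 general} applies to the sequence $0\to B\to E\to A\to 0$ and gives $\hom(B,A)-\ext^1(B,A)\geqslant dr-ke+k(r-k)(1-g)-(r-k)(2r+k)$; combining the two inequalities finishes the proof. I do not expect a genuine obstacle in this argument: the substantive work is entirely contained in Lemma \ref{lemma hom-ext1 general}, and the only point requiring a little care here is the choice of a point of $\mc W$ off the other components, which is what upgrades the lower bound on the local dimension of $Q_{k,d}(E)$ at $[q]$ to a lower bound on $\dim\mc W$.
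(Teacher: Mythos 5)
Your proposal is correct and follows essentially the same route as the paper: the paper also takes a closed point $[q:E\to F]\in\mc W$, invokes the cohomological lower bound $\dim_{[q]}Q_{k,d}(E)\geqslant \hom(\ker q,F)-\ext^1(\ker q,F)$ (citing \cite[Proposition 2.2.8]{HL} rather than rederiving it from the obstruction-theoretic presentation of the complete local ring), and then concludes with Lemma \ref{lemma hom-ext1 general}. Your explicit choice of a point of $\mc W$ lying on no other component is a careful touch that the paper glosses over, but it does not change the argument.
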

\begin{proof}
Let $[q:E \to F]$ be a closed point of $\mc W$ and
$S_F$ denote the kernel of $q$.
Proposition 2.2.8 of \cite{HL} gives us the following bound for
the dimension of $Q_{k,d}(E)$ at the point $[q : E \to F]$,
    \begin{equation}\label{dimension bound}
    \dim_{[q]}Q_{k,d}(E) \geqslant  \hom(S_F,F) - \ext^1(S_F,F)\,.
    \end{equation}
Using Lemma \ref{lemma hom-ext1 general},
we have 
$$\dim_{[q]}Q_{k,d}(E) \geqslant \hom(S_F,F) - \ext^1(S_F,F) \geqslant dr-ke+k(r-k)(1-g)-(r-k)(2r+k) \,.$$
Since this is true for any closed point in $Q_{k,d}(E)$, it follows that any irreducible component of $Q_{k,d}(E)$ has dimension at least
$dr-ke+k(r-k)(1-g)-(r-k)(2r+k)$.
\end{proof}

Let us define the following subsets of $Q_{k,d}(E)$.
\begin{align*}
    Q_{k,d}^{\tf}(E) := &\{[q: E\to F] \in Q_{k,d}(E) : F \textrm{ is torsion-free }\} \, ,\\
    Q_{k,d}^{{\tf},a}(E) := &\{[q: E\to F] \in Q_{k,d}(E) : F \textrm{ is torsion-free of type $a$}\} \\
    = &\{[q: E\to F] \in Q_{k,d}(E) : F \textrm{ is torsion-free and } \dim_\C(F \otimes \kappa(x)) = k+a \} \,.
\end{align*}
The set $Q_{k,d}^{\tf}(E)$ is open in $Q_{k,d}(E)$ by \cite[Theorem 2.3.1]{HL}.
The sets $\q{a}(E)$ are locally closed subsets of $Q_{k,d}^{{\tf}}(E)$ by Lemma \ref{lemma dimx upper-semicontinuity} below.
We give the subsets $\q{a}(E)$ the reduced subscheme structure.
Consider the map $$\dim_x : Q_{k,d}^{{\tf}}(E) \to \Z$$ which sends a closed point $[q:E \to F]$ to $\dim_\C(F \otimes \kappa(x))$.

\begin{lemma}\label{lemma dimx upper-semicontinuity}
    The map $\dim_x$ defined above is upper-semicontinuous.
\end{lemma}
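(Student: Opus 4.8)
The plan is to realise $\dim_x$ as the fibrewise vector-space dimension of a \emph{single} coherent sheaf on $Q_{k,d}^{\tf}(E)$, and then to quote the standard semicontinuity of such dimensions. Write $Q := Q_{k,d}^{\tf}(E)$ for brevity and let $\mc F$ be the restriction to $C \times Q$ of the universal quotient sheaf on $C \times Q_{k,d}(E)$; since the universal quotient is flat over $Q_{k,d}(E)$, the sheaf $\mc F$ is flat over $Q$, and for each closed point $y = [q:E\to F] \in Q$ one has $\mc F|_{C \times \{y\}} \cong F$. Let $\iota : \{x\} \times Q \hookrightarrow C \times Q$ be the closed immersion cut out by the node $x \in C$, and use the canonical identification $\{x\} \times Q \cong Q$ (recall $\kappa(x) = \C$). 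Set $\mc G := \iota^{*}\mc F$, a coherent sheaf on $Q$.

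The main point is the base-change identity $\mc G \otimes_{\O_Q} \kappa(y) \cong F \otimes_{\O_C} \kappa(x)$ for every closed point $y = [q:E\to F] \in Q$. Indeed, by associativity of the tensor product both sides are canonically isomorphic to $\mc F \otimes_{\O_{C\times Q}} \kappa((x,y))$, where $(x,y)$ denotes the corresponding closed point of $C \times Q$ and one uses that the residue field there is $\kappa(x)\otimes_\C \kappa(y) = \kappa(y)$. No flatness is needed for this identification; it is purely formal precisely because the node is a $\C$-rational point of $C$. In particular $\dim_x(y) = \dim_\C\!\big(F \otimes_{\O_C}\kappa(x)\big) = \dim_{\kappa(y)}\!\big(\mc G \otimes_{\O_Q}\kappa(y)\big)$ for all closed points $y$.

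It then remains to invoke the fact that, for any coherent sheaf $\mc G$ on a Noetherian scheme, the function $y \mapsto \dim_{\kappa(y)}(\mc G \otimes \kappa(y))$ is upper-semicontinuous, which is a standard consequence of Nakayama's lemma. Applying this to the sheaf $\mc G$ constructed above shows that $\dim_x$ is upper-semicontinuous, as claimed. (As a byproduct, $\q{a}(E) = \{\dim_x \geqslant k+a\}\setminus\{\dim_x \geqslant k+a+1\}$ is locally closed, as asserted in the excerpt.) There is no genuine obstacle in this argument; the only place demanding a moment's care is the base-change identification in the second step, and that is immediate since $x$ is a rational point.
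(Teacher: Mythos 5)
Your proposal is correct and follows essentially the same route as the paper: restrict the universal quotient sheaf to $\{x\}\times Q_{k,d}^{\tf}(E)$, identify its fibrewise dimension with $\dim_\C(F\otimes\kappa(x))$, and invoke upper-semicontinuity of fibre dimension of a coherent sheaf. Your extra care with the base-change identification at the rational point $x$ is a fine (if optional) elaboration of the step the paper leaves implicit.
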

\begin{proof}
    Let $\F$ be the restriction of the universal quotient sheaf on $C \times Q_{k,d}^{{\tf}}(E)$.
    Consider the sheaf $\F_x := \F \vert_{\{x\} \times Q_{k,d}^{{\tf}}(E)}$ on $Q_{k,d}^{{\tf}}(E)$.
    Then $\dim_\C(F \otimes \kappa(x)) = \dim_\C(\F_x \vert_{[q:E \to F]})$.
    So the given map is upper-semicontinuous by \cite[Example 12.7.2, Chapter III]{Ha}.
\end{proof}

Let $\pi : \widetilde{C} \to C$ be the normalization of $C$. 
Let $g$ and $\wt g$ be the arithmetic genus of $C$ and $\wc$ respectively. 
From the exact sequence of cohomology associated to the short exact sequence 
\begin{equation}\label{ses1}
0 \to \O_C \to \pi_*\O_{\wc} \to \kappa(x) \to 0  \, ,  
\end{equation}
one can see that $g = \widetilde{g}+1$.
Let $x_1$ and $x_2$ be the two points of $\wc$ lying over the node $x$ of $C$.
\begin{lemma}\label{lemma degree of pullback}
   Let $F$ be a torsion-free sheaf on $C$ of rank $k$ and type $a$. Then $\deg(\pi^*F) = \deg(F) + a$. 
   In particular, for any torsion-free sheaf $F$ on $C$ of rank $k$, we have $\deg(\pi^*F) \leqslant \deg(F) + k$.
\end{lemma}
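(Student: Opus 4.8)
The plan is to translate the statement into one about Euler characteristics and then into a local computation at the node. Since $\pi$ is birational, $\rk(\pi^*F)=k$, so by the definition of degree \eqref{degree definition}, applied on $\wc$ where the arithmetic genus is $\wt g=g-1$, one gets
\[
\deg(\pi^*F)-\deg(F)=\chi(\wc,\pi^*F)-\chi(C,F)-k\,.
\]
Because $\pi$ is finite we have $R^{>0}\pi_*=0$, hence $\chi(\wc,\pi^*F)=\chi(C,\pi_*\pi^*F)$. Next I would consider the unit of adjunction $\eta\colon F\to\pi_*\pi^*F$. Over the smooth locus $C\setminus\{x\}$ the map $\pi$ is an isomorphism, so $\eta$ is an isomorphism there; thus $\ker\eta$ is a subsheaf of $F$ supported at $x$, and it vanishes because $F$ is torsion-free. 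Therefore $Q:=\coker\eta$ is a torsion sheaf supported at $x$, $\chi(C,\pi_*\pi^*F)=\chi(C,F)+\len(Q)$, and the whole lemma reduces to the identity $\len(Q)=a+k$.

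To compute $\len(Q)$ I would pass to completions at the node. Write $R=\widehat{\O_{C,x}}\cong\C[[u,v]]/(uv)$ and let $\widetilde R\cong\C[[u]]\times\C[[v]]$ be its normalization. A routine check gives $\widehat{(\pi_*\pi^*F)_x}\cong\widehat{F_x}\otimes_R\widetilde R$, with $\eta$ becoming the canonical map $m\mapsto m\otimes 1$; and by Definition \ref{torsion-free structure}, $\widehat{F_x}\cong R^{\oplus(k-a)}\oplus\m^{\oplus a}$ with $\m=(u,v)R$. Consequently $\widehat{Q_x}$ is the direct sum of $k-a$ copies of $\widetilde R/R$ and $a$ copies of $\coker(\m\to\m\otimes_R\widetilde R)$. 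Here $\len(\widetilde R/R)=1$ (the $\delta$-invariant of a node is $1$). For the other summand, $\m$ is the conductor of $R$, so $\m\widetilde R=\m$; tensoring $0\to\m\to R\to\C\to 0$ with $\widetilde R$ then identifies $\coker(\m\to\m\otimes_R\widetilde R)$ with $\Tor_1^R(\C,\widetilde R)$, and reading this off the $2$-periodic free resolution of $\C$ over $R$ gives length $2$. Summing up, $\len(Q)=(k-a)\cdot 1+a\cdot 2=a+k$, which is exactly the required identity; hence $\deg(\pi^*F)=\deg(F)+a$. The ``in particular'' statement is then immediate, since any torsion-free sheaf of rank $k$ has type $a$ with $0\le a\le k$.

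The only step I expect to present any difficulty is the local computation at the node, specifically the fact that pulling the maximal ideal $\m_x$ back through the node contributes torsion of length $2$ rather than $1$ (equivalently, that $\m_x\otimes_R\widetilde R$ carries a length-$2$ torsion submodule). The rest is bookkeeping with Euler characteristics together with the genus relation $\wt g=g-1$ recorded above.
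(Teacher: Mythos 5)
Your argument is correct, and its global bookkeeping coincides with the paper's: both reduce the statement to Euler characteristics via $\chi(\pi^*F)=\chi(\pi_*\pi^*F)$ and the genus drop $\wt g=g-1$, so that everything comes down to the length of $\coker(F\to\pi_*\pi^*F)$. Where you genuinely diverge is in how that length is computed. The paper tensors $0\to\O_C\to\pi_*\O_{\wc}\to\kappa(x)\to0$ with $F$; by the projection formula the middle term is $\pi_*\pi^*F$, torsion-freeness of $F$ kills the $\Tor^1$ term, and the cokernel is identified as $F\otimes\kappa(x)$, whose dimension is $k+a$ by the very definition of type (Definition \ref{torsion-free structure}). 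You instead compute the length at the completed local ring, summand by summand in $\widehat{F_x}\cong R^{\oplus(k-a)}\oplus\m^{\oplus a}$, getting $1$ from each free summand and $2$ from each copy of $\m$ via the conductor and $\Tor_1^R(\C,\wt{R})$. Your computation checks out (in particular the splitting $\m\otimes_R\wt{R}\cong\m\oplus\Tor_1^R(\C,\wt{R})$ is valid because the composite $\m\to\m\otimes_R\wt{R}\to\m\wt{R}=\m$ is the identity), but the step you flag as delicate is exactly what the paper gets for free: for any finitely generated $R$-module $M$ one has $\coker\bigl(M\to M\otimes_R\wt{R}\bigr)\cong M/\m M=M\otimes\kappa(x)$, so the total length is $\dim_\C(F\otimes\kappa(x))=k+a$ by definition of type, with no need for the periodic resolution or the $\Tor_1$ identification. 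The paper's route is therefore shorter; yours has the mild virtue of making explicit why each copy of $\m$ contributes $2$ (and, as a byproduct, that $\pi^*\m$ acquires torsion of length $1$ at each branch), but it re-derives rather than exploits the numerical content of Definition \ref{torsion-free structure}.
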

\begin{proof}
Tensoring the short exact sequence 
$0 \to \O_C \to \pi_*\O_{\wc} \to \kappa(x) \to 0$
with $F$, we get an exact sequence
$$ \textrm{Tor}^1(F,\kappa(x)) \to F \to \pi_*\pi^*F \to F\otimes \kappa(x) \to 0\,.$$
As $\textrm{Tor}^1(F,\kappa(x))$ is supported at the single point $x$ and $F$ is torsion-free, we get a short exact sequence
\begin{equation}\label{ses2}
    0 \to F \to \pi_*\pi^*F \to F\otimes \kappa(x) \to 0\,.
\end{equation}
This gives us $\chi(\pi_* \pi^*F) = \chi(F) + \chi(F \otimes \kappa(x))$.
As $\pi$ is a finite map,
\begin{align}
    \chi(\pi^*F) = \chi(\pi_* \pi^*F) & = \chi(F) + \chi(F \otimes \kappa(x)) \,.
\end{align}
Using \eqref{degree definition} on both sides we have
$$\deg(\pi^*F) + \rk(F)(1-\wt g) = \deg(F) +\rk(F)(1-g) +k+a\,.$$
Using $\wt g = g-1$, we get $ \deg(\pi^*F) = \deg(F) + a$.
\end{proof}

\begin{remark}\label{CD of ses}
    For any torsion-free sheaf $F$ on $C$, let $j : F \to \pi_*\pi^*F$ be the inclusion in the short exact sequence \eqref{ses2}.
    Restricting the map at $x$, we have a map of vector spaces
    $\iota := j\vert_x : F\vert_x \to (\pi^*F\vert_{x_1} \oplus \pi^*F\vert_{x_2})$.
    As $\coker(j)$ is isomorphic to $F \otimes \kappa(x)$ which is 
    supported scheme theoretically at a single point $x$, 
    so $\coker(j) = \coker(j\vert_x)$.
    So the short exact sequence \eqref{ses2} can be rewritten as 
    \begin{equation}
        0 \to F \xrightarrow{j} \pi_*\pi^*F \to (\pi^*F\vert_{x_1} \oplus \pi^*F\vert_{x_2})/ \iota(F \vert_x) \to 0\,.
    \end{equation}
    Let $\psi : F \to G$ be a map of torsion-free sheaves on $C$.
    Then the following diagram is commutative which can be checked locally
    \begin{equation}\label{diag3}
        \begin{tikzcd}
        F \arrow{r}{J_F} \arrow{d}{\psi} & \pi_*\pi^*F \arrow{d}{\pi_*\pi^*\psi}\\
        G \arrow{r}{j_G} & \pi_*\pi^*G  \,.
    \end{tikzcd}
    \end{equation}
\end{remark}

\begin{proposition}\label{proposition f_a map}
For each type $a$, there exists a map $$f_a : \q{a}(E) \to\Quot_{\wc/\C}(\pi^*E,k,d+a)$$ which is injective on closed points.
\end{proposition}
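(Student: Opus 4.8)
The plan is to construct $f_{a}$ by pulling quotients back along the normalization $\pi\colon\wc\to C$. On closed points I would send $[q\colon E\to F]\in\q{a}(E)$ to $[\pi^{*}q\colon\pi^{*}E\to\pi^{*}F]$. Since $\pi^{*}$ is right exact, $\pi^{*}q$ is again surjective; the sheaf $\pi^{*}F$ has rank $k$, and by Lemma \ref{lemma degree of pullback} its degree is $d+a$. Note that $\pi^{*}F$ need not be torsion-free when $a>0$, but this causes no problem, since $\Quot_{\wc/\C}(\pi^{*}E,k,d+a)$ parametrizes \emph{all} coherent quotients of $\pi^{*}E$ of rank $k$ and degree $d+a$; so the assignment lands in the asserted target.

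To promote this to a morphism of schemes, I would let $\F$ denote the restriction to $C\times\q{a}(E)$ of the universal quotient, so that $\F$ is flat over $\q{a}(E)$ and is a quotient of the pullback of $E$ to $C\times\q{a}(E)$, and then apply $\Pi^{*}$, where $\Pi:=\pi\times\id\colon\wc\times\q{a}(E)\to C\times\q{a}(E)$. This yields a surjection onto $\Pi^{*}\F$ from the pullback of $\pi^{*}E$ to $\wc\times\q{a}(E)$, whose restriction to the fibre over $[q]$ is $\pi^{*}q$. The step that I expect to be the main obstacle is verifying that $\Pi^{*}\F$ is flat over $\q{a}(E)$: the morphism $\pi$ is not flat at the node, so $\Pi^{*}$ does not a priori preserve flatness over the base. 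Here the restriction to a fixed type $a$ is exactly what saves the argument: by Lemma \ref{lemma degree of pullback} every fibre $(\Pi^{*}\F)|_{\wc\times\{[q]\}}=\pi^{*}F$ has Euler characteristic $d+a+k(1-\wt g)$, independent of $[q]$, and since $\q{a}(E)$ has been given the reduced scheme structure, constancy of the fibrewise Hilbert polynomial forces flatness of $\Pi^{*}\F$ over $\q{a}(E)$ (the standard flatness criterion over a reduced base; cf.\ \cite[Chapter III, Theorem 9.9]{Ha}). Once flatness is established, the universal property of $\Quot_{\wc/\C}(\pi^{*}E,k,d+a)$ yields the morphism $f_{a}$, which induces the point-map described above.

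For injectivity on closed points, suppose $[q_{1}\colon E\to F_{1}]$ and $[q_{2}\colon E\to F_{2}]$ in $\q{a}(E)$ have the same image under $f_{a}$. Set $S_{i}:=\ker(q_{i})\subseteq E$, and let $S_{i}'\subseteq\pi^{*}E$ be the image of $\pi^{*}S_{i}\to\pi^{*}E$, so that $S_{i}'=\ker(\pi^{*}q_{i})$; the hypothesis says precisely $S_{1}'=S_{2}'=:S'$. I would then recover $S_{i}$ from $S'$ by proving the identity $S_{i}=E\cap\pi_{*}(S')$ inside $\pi_{*}\pi^{*}E$, where $E$ is viewed as a subsheaf of $\pi_{*}\pi^{*}E$ via the inclusion $j_{E}$ of \eqref{ses2} (valid since $E$ is torsion-free) and $\pi_{*}(S')$ is a subsheaf of $\pi_{*}\pi^{*}E$ because $\pi$ is finite. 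The inclusion $S_{i}\subseteq E\cap\pi_{*}(S')$ follows by applying the commutative square \eqref{diag3} to $S_{i}\hookrightarrow E$: it shows $j_{E}(S_{i})$ lies in the image of $\pi_{*}\pi^{*}S_{i}\to\pi_{*}\pi^{*}E$, which is $\pi_{*}(S_{i}')=\pi_{*}(S')$. For the reverse inclusion, if a local section $\sigma$ of $E$ satisfies $j_{E}(\sigma)\in\pi_{*}(S')$, then \eqref{diag3} applied to $q_{i}\colon E\to F_{i}$ gives $j_{F_{i}}(q_{i}(\sigma))=(\pi_{*}\pi^{*}q_{i})(j_{E}(\sigma))$; since $\pi_{*}\pi^{*}q_{i}$ annihilates $\pi_{*}(S')=\pi_{*}\ker(\pi^{*}q_{i})$, this vanishes, and since $j_{F_{i}}$ is injective (because $F_{i}$ is torsion-free) we conclude $q_{i}(\sigma)=0$, i.e.\ $\sigma\in S_{i}$. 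Hence $S_{1}=E\cap\pi_{*}(S')=S_{2}$, so $[q_{1}]=[q_{2}]$ in $Q_{k,d}(E)$, and $f_{a}$ is injective on closed points.
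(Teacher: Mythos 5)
Your construction of $f_a$ is the same as the paper's: pull back the universal quotient along $\pi\times\id$ and verify flatness of the pulled-back family via constancy of the fibrewise Hilbert polynomial over the reduced base $\q{a}(E)$ (the paper cites \cite[Proposition 2.1.2]{HL} for exactly this). Your injectivity argument, however, takes a genuinely different and in fact cleaner route. The paper starts from an isomorphism $\psi\colon\pi^*F_1\to\pi^*F_2$ compatible with the quotient maps, checks via a diagram at the fibres over $x_1,x_2$ that $\psi$ respects the subspaces $\iota_i(F_i\vert_x)$, and then descends $\psi$ to an isomorphism $\theta\colon F_1\to F_2$ using the presentations $0\to F_i\to\pi_*\pi^*F_i\to(\pi^*F_i\vert_{x_1}\oplus\pi^*F_i\vert_{x_2})/\iota_i(F_i\vert_x)\to0$, finishing with a density argument to see $\theta\circ\vp_1=\vp_2$. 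You instead work entirely with kernels: using right-exactness of $\pi^*$ to identify $\ker(\pi^*q_i)$ with the image of $\pi^*S_i$, exactness of $\pi_*$ for the finite map $\pi$, the functoriality square \eqref{diag3}, and injectivity of $j_{F_i}$ for torsion-free $F_i$, you recover $S_i=E\cap\pi_*(\ker\pi^*q_i)$ inside $\pi_*\pi^*E$, which immediately gives $S_1=S_2$ and hence equality of the two Quot-scheme points. All the ingredients you invoke (right-exactness, exactness of $\pi_*$, the square \eqref{diag3}, injectivity of $j_{F_i}$) are available and correctly applied, so the argument is complete; it avoids the paper's explicit construction of $\theta$ and the verification that it intertwines the quotient maps, at the cost of no extra hypotheses.
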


\begin{proof}
    Let us fix a type $a$ with $0\leqslant a \leqslant k$.
    Let $p_C : C \times Q_{k,d}(E) \to C$ be the projection map. 
    There is a universal quotient $p_C^*E \to \F\to 0$ 
    on $C \times Q_{k,d}(E)$, $\F$ is flat over $Q_{k,d}(E)$.
    Let $\F_a$ denote the restriction of $\F$ on the subset $\q{a}(E)$.
    Let $\wt \pi : \wc \times \q{a}(E) \to C \times \q{a}(E)$
    denote the map $\pi \times \id_{\q{a}(E)}$.
    Pulling back the quotient via the map $\wt \pi$, we get a quotient on $\wc \times \q{a}(E)$ 
    \begin{equation}\label{pullback quotient}
        \wt \pi^* p_C^*E \longrightarrow \wt \pi^*\F_a \longrightarrow 0\,.
    \end{equation} 
    We aim to show that $\wt \pi^*\F_a$ is flat over $\q{a}(E)$.
    Let $[\varphi: E \to F] \in \q{a}(E)$ be a closed point.
    Then the fiber $(\pi^*\F_a)\vert_{[\varphi]}$ is the pullback sheaf $\pi^*F$ on $\wc$.
    As $F$ is torsion-free, we have the short exact sequence of sheaves on $C$ 
    $$0 \to F \to \pi_* \pi^* F \to F \otimes \kappa(x) \to 0\,.$$
    We easily see that the Hilbert polynomial of $(\wt \pi ^* \F_a)_{[\vp :E \to F]} = \pi^*F$ is constant as a function of points $[\vp:E \to F]$ in $\q{a}(E)$.
    Using \cite[Proposition 2.1.2]{HL}, one concludes that 
    $\wt \pi^*\F_a$ is flat over $\q{a}(E)$.
    So the quotient \eqref{pullback quotient} induces a map
    $$f_a : \q{a}(E) \longrightarrow \Quot_{\wc/\C}(\pi^*E,k,d+a)\, ,$$
    as we have seen that $\deg(\pi^*F) = \deg(F)+a$ in Lemma \ref{lemma degree of pullback}.

    Next we show that the map $f_a$ is injective on closed points.
    We start with two closed points $\vp_1: E\to F_1$ and $\vp_2:E \to F_2$ in $\q{a}(E)$ such that $\pi^*\vp_1 = \pi^*\vp_2$.
    So there is an isomoprhism $\psi : \pi^*F_1 \to \pi^*F_2$ satisfying the commutative diagram
    $$ \begin{tikzcd}
        \pi^*E \arrow{r}{\pi^*\vp_1} \ar[d,-,double equal sign distance,double] &\pi^*F_1 \arrow{d}{\psi} \\
        \pi^*E \arrow{r}{\pi^*\vp_2} &\pi^*F_2 \,.
    \end{tikzcd}$$
   Let $\iota_i$ and $j_i$ denote the maps $\iota_{F_i}$ and $j_{F_i}$ (see Remark \ref{CD of ses} for the defintion of the maps).
   It is easy to see that 
   $$(\psi \vert_{x_1} \oplus \psi \vert_{x_2})(\iota_1(F_1\vert_{x})) \subseteq \iota_2(F_2 \vert_{x_2})$$
   using the following diagram in which the two squares and the two triangles commute, and $\vp_1\vert_x$ is surjective.
   \begin{equation}
       \begin{tikzcd}
           & & F_1\vert_x \arrow[drr,bend left=20,"\iota_1"] &  \\
           E \arrow[urr,"\vp_1\vert_x",bend left=15] \vert_x \arrow{r}{\iota_E} \ar[d,-,double equal sign distance,double] & \pi^*E\vert_{x_1} \oplus \pi^*E\vert_{x_2} \arrow{rrr}{\pi^*\vp_1 \vert_{x_1} \oplus \pi^*\vp_1 \vert_{x_2}} \ar[d,-,double equal sign distance,double] &&& \pi^*F_1\vert_{x_1} \oplus \pi^*F_1\vert_{x_2} \arrow{d}{\psi \vert_{x_1} \oplus \psi \vert_{x_2}}\\
           E  \arrow[drr,bend right=15, "\vp_2\vert_x"] \vert_x \arrow{r}{\iota_E} & \pi^*E\vert_{x_1} \oplus \pi^*E\vert_{x_2} \arrow{rrr}{\pi^*\vp_1 \vert_{x_1} \oplus \pi^*\vp_1 \vert_{x_2}} & &&\pi^*F_1\vert_{x_1} \oplus \pi^*F_1\vert_{x_2} \\
           & & F_2\vert_x \arrow[urr,bend right=20,"\iota_2"]
       \end{tikzcd}
   \end{equation}
    It follows that the map $\psi \vert_{x_1} \oplus \psi \vert_{x_2}$ induces a map 
   $$ \wt{\psi_x} : (\pi^*F_1\vert_{x_1} \oplus \pi^*F_1\vert_{x_2})/\iota_1(F_1\vert_x) \longrightarrow (\pi^*F_2\vert_{x_1} \oplus \pi^*F_2\vert_{x_2})/\iota_2(F_2\vert_x)\,.$$
   Consequently we get a map $\theta : F_1 \to F_2$ given by the dashed arrow in the following diagram 
   \begin{equation}
       \begin{tikzcd} 
       0 \arrow{r}{} & F_1 \arrow{r}{J_1} \arrow[d,dashed,"\theta"] & \pi_*\pi^*F_1 \arrow{d}{\pi_*\psi} \arrow{r}{} & (\pi^*F_1\vert_{x_1} \oplus \pi^*F_2\vert_{x_2})/\iota_1(F_1\vert_x) \arrow{d}{\wt{\psi_x}} \arrow{r}{} & 0\\
        0 \arrow{r}{} & F_2 \arrow{r}{j_2} & \pi_*\pi^*F_2 \arrow{r}{} & (\pi^*F_2\vert_{x_1} \oplus \pi^*F_2 \vert_{x_2})/ \iota_2(F_2\vert_x) \arrow{r}{} & 0 \,.
       \end{tikzcd}
   \end{equation}
   Considering the inverse of $\psi$ and proceeding similarly we get an inverse of $\wt{\psi_x}$. 
   As $\pi_*\psi$ and $\wt{\psi_x}$ both are isomorphism, so is $\theta$.
   It remains to check that the following diagram commutes 
   \begin{equation}\label{diag9}
       \begin{tikzcd}
       E \arrow{r}{\vp_1} \ar[d,-,double equal sign distance,double] & F_1 \arrow{d}{\theta}\\
       E \arrow{r}{\vp_2} & F_2\,.
   \end{tikzcd}
   \end{equation} 
   Since $\pi$ is an isomorphism outside the point $x$, it is clear that $\theta = \pi^*\psi$ on $C \setminus \{x\}$.
   Now the set over which these two maps agree is closed in $C$ 
   containing $C \setminus \{x\}$,
   so it has to be the whole of $C$.
   This proves that $\theta = \pi^*\psi$ on $C$.
   Hence $\theta : F_1 \to F_2$ is an isomorphism satisfying $\theta \circ \vp_1 = \vp_2$, which proves that $\vp_1$ and $\vp_2$ corresponds to the same point in $\q{a}(E)$.
   This proves injectivity of $f^a$ on closed points.
\end{proof}

We will use the map $f_a$ to bound the dimension of some subsets of the Quot scheme $Q_{k,d}(E)$ using bounds of related subsets in the quot schemes $\Quot_{\wc/\C}(\pi^* E,k,d')$.
We start by computing dimension bound for a subset of $\Quot_{\wc/\C}(\pi^* E,k,d')$.
The following lemma is contained in the proof of \cite[Lemma 6.3]{PR03}. 
However we give a detailed proof for the convenience of the reader.
For ease of notation let us denote $\pi^*E$ by $\wt E$ for rest of this section.

Let $d'$, $d_0$ be integers and let $k_0$ be an integer such that $0<k_0<k$.
Let $\wt Z$ be the subset of $\Quot_{\wc/\C}(\wt E,k,d')$ defined as follows,
\begin{equation}\label{definition wt Z}
\wt Z = \{ [\wt E \to G] \in \Quot_{\wc/\C}(\wt E,k,d') : \exists \textrm{ a quotient } G \to G_0 \textrm{ of rank }k_0 \textrm{ and degree } d_0 \}\,.
\end{equation}
\begin{lemma}\label{lemma dimension wt Z}
There exist constants $C_1(E,k,k_0,d_0)$ and $\alpha_1(E,k,k_0,d_0)$ such that
we have
$$\dim \wt Z \leqslant  d'(r-k_0) + C_1 \quad \quad \text{ for any } \quad d' \geqslant \alpha_1\,.$$
\end{lemma}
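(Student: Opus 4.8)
The strategy is to stratify $\wt Z$ by the pair $(k_0, d_0)$ being fixed and to bound the dimension of the total space of a "flag Quot scheme" parametrizing a quotient $\wt E \to G$ of rank $k$, degree $d'$ together with a further quotient $G \to G_0$ of rank $k_0$, degree $d_0$. Concretely, I would introduce the incidence variety
\[
\wt Z' = \{\, (\,[\wt E \to G],\ [G \to G_0]\,) \ :\ \operatorname{rk} G_0 = k_0,\ \deg G_0 = d_0 \,\}
\]
together with its two projections: $p$ to $\Quot_{\wc/\C}(\wt E,k,d')$ (whose image is $\wt Z$) and the forgetful map sending the pair to the composite quotient $[\wt E \to G_0] \in \Quot_{\wc/\C}(\wt E,k_0,d_0)$. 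Since $p$ is surjective onto $\wt Z$, it suffices to bound $\dim \wt Z'$. The base $\Quot_{\wc/\C}(\wt E,k_0,d_0)$ is a fixed scheme not depending on $d'$ (it depends only on $E,k_0,d_0$), so its dimension contributes a constant; I will absorb it into $C_1$.

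**Key steps.** First I would fix a point $[\wt E \to G_0]$ in the base and analyze the fiber of the forgetful map over it. Giving a point of that fiber amounts to: a subsheaf $S_0 = \ker(\wt E \to G_0)$, and then a rank-$(k-k_0)$, degree-$(d'-d_0)$ torsion-quotient-type surjection $S_0 \to G_1$ whose kernel is $\ker(\wt E \to G)$; equivalently the fiber is (a union of pieces of) $\Quot_{\wc/\C}(S_0, k-k_0, d'-d_0)$, with $S_0$ a sheaf of rank $r - k_0$. Now $\wc$ is smooth, so I can invoke the dimension bound of \cite[Theorem 4.1]{PR03} (quoted as step \eqref{step upper bound} in the introduction): for a fixed sheaf $S_0$ of rank $r-k_0$ on the smooth curve $\wc$,
\[
\dim \Quot_{\wc/\C}(S_0, k-k_0, d'-d_0) \ \leq\ (k-k_0)(r-k_0-(k-k_0)) + \big( (d'-d_0) - m(S_0,k-k_0) \big)(r-k_0)
\]
for $d' - d_0 \geq m(S_0, k-k_0)$. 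The leading term in $d'$ here is $d'(r-k_0)$, which is exactly the bound we want; everything else is a constant provided we can bound $m(S_0, k-k_0)$ uniformly over the relevant family of $S_0$'s. But the family of kernels $S_0 = \ker(\wt E \to G_0)$, as $[\wt E \to G_0]$ ranges over the fixed scheme $\Quot_{\wc/\C}(\wt E,k_0,d_0)$, is a bounded family; by Lemma \ref{d_k semicontinuity} the set of values $m(S_0, k-k_0)$ is finite, hence uniformly bounded. Combining: $\dim \wt Z' \leq \dim \Quot_{\wc/\C}(\wt E,k_0,d_0) + \max_{S_0}\dim\Quot_{\wc/\C}(S_0,k-k_0,d'-d_0) \leq d'(r-k_0) + C_1$ for $d' \geq \alpha_1$, where $C_1$ and $\alpha_1$ depend only on $E,k,k_0,d_0$. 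Since $\dim \wt Z \leq \dim \wt Z'$, we are done.

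**Main obstacle.** The delicate point is making sure all the "constants" really are independent of $d'$: this requires (i) the base Quot scheme $\Quot_{\wc/\C}(\wt E,k_0,d_0)$ to be a fixed finite-type scheme, which it is since $k_0,d_0$ are fixed, and (ii) the uniform bound on $m(S_0, k-k_0)$ over the family of kernels, which is precisely where Lemma \ref{d_k semicontinuity} is needed — one must check that the kernels $S_0$ fit into a single flat family over $\Quot_{\wc/\C}(\wt E,k_0,d_0)$ so that the lemma applies. A secondary subtlety is the bookkeeping of the fiber of the forgetful map: strictly, the data of $[\wt E\to G]$ with a chosen quotient $G\to G_0$ over a fixed $[\wt E\to G_0]$ is a subscheme of $\Quot_{\wc/\C}(S_0,k-k_0,d'-d_0)$ cut out by the condition that the composite $S_0 \to G_1 \hookrightarrow$ (the relevant extension) reproduces $G_0$; since we only need an upper bound, it is enough to note this fiber embeds into the full Quot scheme of $S_0$ and apply the $\cite{PR03}$ bound there. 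No $H^1$-vanishing or genus hypothesis on $\wc$ is needed for this lemma; it is purely a dimension count on a smooth curve.
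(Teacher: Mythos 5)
Your proposal is correct and follows essentially the same route as the paper: the incidence variety $\wt Z'$ you describe is precisely the relative Quot scheme $D=\Quot_{\wc\times A/A}(\S_0,k-k_0,d'-d_0)$ over $A=\Quot_{\wc/\C}(\wt E,k_0,d_0)$ used in the paper, with the same surjection onto $\wt Z$, the same fiberwise application of \cite[Theorem 4.1]{PR03} (note $(k-k_0)(r-k_0-(k-k_0))=(k-k_0)(r-k)$), and the same use of Lemma \ref{d_k semicontinuity} to bound $m(\ker q_0,k-k_0)$ uniformly over the bounded family of kernels.
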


\begin{proof}
    Let $A$ denote the Quot scheme $\Quot_{\wc/\C}(\wt E,k_0,d_0)$.
    Let $\S_0$ denote the universal kernel on $\wc \times A$.
    Consider the relative quot scheme 
    $$ D:= \Quot_{\wc \times A/A}(\S_0,k-k_0,d'-d_0)\,.$$
    A closed point of $D$ corresponds to a pair $([q_0],[\vp])$ where
    $[q_0 : \wt E\to F_0] \in A$ and
    $\vp: \ker (q_0) \to B$ is a quotient of rank $k-k_0$ and degree $d'-d_0$.
    Given such a point, we can construct a quotient $q:\wt E \to F$ 
    using the following push-out diagram
    $$ 
    \begin{tikzcd}
        0 \ar{r}{} & \ker(q_0) \ar{r}{} \ar{d}{\vp} & \wt E \ar{r}{q_0} \ar{d}{q} & F_0 \ar[d,-,double equal sign distance,double] \ar{r}{} & 0 \phantom{\,.}\\
        0 \ar{r}{} & B \ar{r}{} & F \ar{r}{} & F_0 \ar{r}{} & 0 \,.
    \end{tikzcd}
    $$
    Clearly, the degree of $F$ is $d'$ 
    and so $q$ corresponds to a point of $\Quot_{\wc/\C}(\wt E,k,d')$.
    It can be checked that there is a map of schemes 
    $$ g : D \to \Quot_{\wc/\C}(\wt E,k,d')$$
    which sends the point $([q_0],[\vp])$ to the point $[q]$.
    Given any point $[q:\wt E \to F]$ in $\wt Z$, 
    we have a quotient $[q_0 :\wt E \to F_0] \in A$ (by definition of the locus $\wt Z$)
    and we get a quotient 
    $\ker q_0 \to B \to 0$ of rank $k-k_0$ and degree $d'-d_0$. 
    This shows that $g$ maps onto the subset $\wt Z$.
    Consequently, we have the following upper bound
    \begin{equation}\label{equation dim wt Z}
    \begin{aligned}
    \dim \wt Z & \leqslant \dim D 
    \leqslant \dim A + \max_{q_0 \in A}\left\{\dim \left(\Quot_{\wc/\C}(\ker q_0,d'-d_0,k-k_0)\right)\right\}\,.
    \end{aligned}
    \end{equation}
Using \cite[Theorem 4.1]{PR03} we have, for all $d'-d_0 \geqslant m(\ker q_0,k-k_0))$,
\begin{equation*}
\dim\left(\Quot_{\wc/\C}(\ker q_0,d'-d_0,k-k_0)\right) \leqslant
(k-k_0)(r-k) +  (d'-d_0-m(\ker q_0,k-k_0))(r-k_0) \,.
\end{equation*}
As the collection $\{\ker q_0: q_0 \in A\}$ is a bounded family, by Lemma \ref{d_k semicontinuity}, the set $\{m(\ker q_0,k-k_0) : q_0 \in A\}$ is finite.
So there exist constants $M'(E,k)$ and $M''(E,k)$ such that 
$$ M' \leqslant m(\ker q_0,k-k_0) \leqslant  M''
\quad \textrm{ for all } [q_0] \in A.$$
It follows that, for all  $d' \geqslant M''+d_0$, we have
\begin{align*}
\dim\left(\Quot_{\wc/\C}(\ker q_0,d'-d_0,k-k_0)\right) \leqslant
(k-k_0)(r-k) + & (d'-d_0-M')(r-k_0)\,.
\end{align*}
Consequently, from \eqref{equation dim wt Z} we have
\begin{align*}
   \dim \wt Z &\leqslant \dim A + \max_{q_0 \in A}\left\{\dim \left(\Quot_{\wc/\C}(\ker q_0,d'-d_0,k-k_0)\right)\right\} \\
   &\leqslant \dim(\Quot_{\wc/\C}(\wt E,k_0,d_0)) +(k-k_0)(r-k) + (d'-d_0-M')(r-k_0) \\
   & \leqslant d'(r-k_0) + \dim(\Quot_{\wc/\C}(\wt E,k_0,d_0)) + (k-k_0)(r-k) -(d_0+M')(r-k_0)  \,.
   \end{align*}
We define $\alpha_1 := M'' +d_0$ and $C_1:= \dim(\Quot_{\wc/\C}(\wt E,k_0,d_0)) + (k-k_0)(r-k) -(d_0+M')(r-k_0)$.
This proves the lemma.
\end{proof}

\begin{lemma}\label{lemma dimension Y}
Let $d_0$ and $k_0$ be integers such that $0<k_0<k$.
Let $Y_{d_0,k_0}$ be a subset of $Q^{\tf}_{k,d}(E)$ defined as follows
\begin{equation}\label{equation definition Y}
    Y_{d_0,k_0} = \{ [E \to F] \in Q^{\tf}_{k,d}(E): \pi^*F \textrm{ has a quotient of rank $k_0$ and degree } d_0 \}\,.
\end{equation}
Then there exist constants $C_2(E,k,k_0,d_0)$ and $\alpha_2(E,k,k_0,d_0)$ such that 
$$\dim Y_{d_0,k_0} \leqslant  d(r-k_0) + C_2 \quad \quad \text{ for any } \quad d \geqslant \alpha_2\,.$$
\end{lemma}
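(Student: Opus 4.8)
The plan is to stratify $Y_{d_0,k_0}$ by the type of its torsion-free quotient sheaf and to transfer each stratum to the Quot scheme on the normalization $\wc$ via the injective maps $f_a$ of Proposition \ref{proposition f_a map}, where a locus of exactly the required shape has already been bounded in Lemma \ref{lemma dimension wt Z}. Concretely, since every torsion-free sheaf on $C$ of rank $k$ has a well-defined type $a$ with $0\le a\le k$, we have a set-theoretic decomposition $Y_{d_0,k_0}=\bigsqcup_{a=0}^{k}\bigl(Y_{d_0,k_0}\cap\q{a}(E)\bigr)$ into finitely many locally closed pieces, so $\dim Y_{d_0,k_0}=\max_{0\le a\le k}\dim\bigl(Y_{d_0,k_0}\cap\q{a}(E)\bigr)$ and it is enough to bound each piece.

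Fix $a$ and set $d':=d+a$. I apply the morphism $f_a:\q{a}(E)\to\Quot_{\wc/\C}(\wt E,k,d+a)$ of Proposition \ref{proposition f_a map}, which on closed points sends $[E\to F]$ to $[\pi^*E\to\pi^*F]$ and is injective on closed points. If $[E\to F]\in Y_{d_0,k_0}$, then by definition $\pi^*F$ admits a quotient of rank $k_0$ and degree $d_0$, which is precisely the condition defining the locus $\wt Z\subseteq\Quot_{\wc/\C}(\wt E,k,d')$ of \eqref{definition wt Z} for these $k_0,d_0$. Hence $f_a$ restricts to a map $Y_{d_0,k_0}\cap\q{a}(E)\to\wt Z$ that is injective on closed points; since an injective-on-closed-points morphism of finite-type $\C$-schemes has finite fibres and therefore does not raise dimension, $\dim\bigl(Y_{d_0,k_0}\cap\q{a}(E)\bigr)\le\dim\wt Z$.

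By Lemma \ref{lemma dimension wt Z}, applied with the same $k_0,d_0$, there are constants $C_1=C_1(E,k,k_0,d_0)$ and $\alpha_1=\alpha_1(E,k,k_0,d_0)$ with $\dim\wt Z\le d'(r-k_0)+C_1$ whenever $d'\ge\alpha_1$. Since $d'=d+a$ with $0\le a\le k$, the inequality $d\ge\alpha_1$ already forces $d'\ge\alpha_1$, and then
\[
\dim\bigl(Y_{d_0,k_0}\cap\q{a}(E)\bigr)\le(d+a)(r-k_0)+C_1\le d(r-k_0)+k(r-k_0)+C_1.
\]
Taking the maximum over $a\in\{0,\dots,k\}$ and setting $C_2:=C_1+k(r-k_0)$ and $\alpha_2:=\alpha_1$ yields $\dim Y_{d_0,k_0}\le d(r-k_0)+C_2$ for all $d\ge\alpha_2$, as claimed.

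I do not expect a serious obstacle, since the substantive work is already contained in Proposition \ref{proposition f_a map} and Lemma \ref{lemma dimension wt Z}. The two points needing care are: matching the defining data of $Y_{d_0,k_0}$ with that of $\wt Z$ — that is, the rank-$k_0$, degree-$d_0$ quotient of $\pi^*F$ witnessing membership of $[E\to F]$ in $Y_{d_0,k_0}$ is exactly what a point of $\wt Z$ is required to carry, so the inclusion $f_a\bigl(Y_{d_0,k_0}\cap\q{a}(E)\bigr)\subseteq\wt Z$ is a tautology — and the elementary fact that a morphism injective on closed points cannot increase dimension, applied to the reduced locally closed strata $\q{a}(E)$. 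The remaining work is just bookkeeping of the constants, all of which depend only on $E,k,k_0,d_0$ (via $\wt E=\pi^*E$).
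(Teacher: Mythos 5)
Your proposal is correct and follows essentially the same route as the paper's proof: decompose $Y_{d_0,k_0}$ into the finitely many strata $Y_{d_0,k_0}\cap\q{a}(E)$, push each stratum into $\wt Z\subseteq\Quot_{\wc/\C}(\wt E,k,d+a)$ via the injective-on-points map $f_a$, and invoke Lemma \ref{lemma dimension wt Z}, absorbing the shift $a\le k$ into the constant $C_2=C_1+k(r-k_0)$ with $\alpha_2=\alpha_1$. The constants and bookkeeping match the paper's exactly.
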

\begin{proof}
    For the sake of notation, we denote $Y_{d_0,k_0}$ simply by $Y$ in this proof.
    For any type $a$, let $Y^a := Y \cap \q{a}(E)$.
    Then $Y$ is disjoint union of the finitely many subsets $Y^a$ and so it is enough to bound the dimension of $Y^a$.
    Fix a type $a$.
    Consider the map defined in Proposition \ref{proposition f_a map},
    $$f_a : \q{a}(E) \to\Quot_{\wc/\C}(\wt E,k,d+a)\,.$$
    Consider the subset $\wt Z$ of  $\Quot_{\wc/\C}(\wt E,k,d+a)$
    as defined in \eqref{definition wt Z}, taking $d'=d+a$.
    It is clear that the image $f_a(Y^a)$ is contained in $\wt Z$. 
    Using Lemma \ref{lemma dimension wt Z}, we get constants $C_1$ and $\alpha_1$ such that 
    $$\dim \wt Z \leqslant  (d+a)(r-k_0) + C_1 \quad \text{ for } d+a \geqslant \alpha_1 \,.$$
    As $f_a$ is injective on points, we have the following
    $$ \dim Y^a = \dim (f^a(Y^a)) \leqslant \dim \wt Z \leqslant (d+a)(r-k_0) + C_1 
    \quad \text{ for } d \geqslant \alpha_1 \,.$$
    Taking $\alpha_2 := \alpha_1$ and $C_2:= k(r-k_0)+C_1$,
    we have 
    $$\dim Y \leqslant  d(r-k_0) + C_2 \text{ for } \quad d \geqslant \alpha_2 \,.$$
    This proves the lemma.
\end{proof}

\begin{lemma}\label{lemma dimension Ymu}
Given a number $\mu_0$, 
define the subset $Y_{\mu_0}$ of $Q^{\tf}_{k,d}(E)$ as follows
$$Y_{\mu_0} = \{ [E \to F] \in Q^{\tf}_{k,d}(E): \mu_{\min}(F) < \mu_0 \}\,.$$
Then there exist constants $C_3(E,k,\mu_0)$ and $\alpha_3(E,k,\mu_0)$ such that 
$$\dim Y_{\mu_0} \leqslant  d(r-1) + C_3  \quad \quad \text{ for any } \quad d \geqslant \alpha_3\,.$$
\end{lemma}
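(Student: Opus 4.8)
The plan is to reduce the bound on $\dim Y_{\mu_0}$ to the bound on $\dim Y_{d_0,k_0}$ already established in Lemma \ref{lemma dimension Y}, by observing that a torsion-free quotient $F$ with $\mu_{\min}(F) < \mu_0$ forces the pullback $\pi^*F$ to admit a destabilizing quotient of controlled rank and degree. Concretely, if $[E\to F]\in Y_{\mu_0}$, then by definition of $\mu_{\min}$ there is a quotient sheaf $F\to F_0$ with $\rk(F_0)=k_0$ for some $0<k_0\le k$ and $\mu(F_0)<\mu_0$, i.e. $\deg F_0 < k_0\mu_0$. Pulling back by the finite map $\pi$ gives a quotient $\pi^*F\to \pi^*F_0$ on $\wc$ of rank $k_0$ and degree $\deg(\pi^*F_0)$, and by Lemma \ref{lemma degree of pullback} we have $\deg(\pi^*F_0)\le \deg(F_0)+k_0 < k_0\mu_0+k_0$. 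So $\pi^*F$ has a quotient of rank $k_0$ and degree at most $k_0(\mu_0+1)-1$. Also $\deg(\pi^*F_0)$ is bounded below: any quotient of the fixed bundle $\pi^*E$ of rank $k_0$ has degree at least $m(\pi^*E,k_0)$, a constant depending only on $E$ and $k_0$. Hence there are finitely many possible pairs $(k_0,d_0)$ with $0<k_0<k$ — here I must handle the case $k_0=k$ separately — and $m(\pi^*E,k_0)\le d_0 \le k_0(\mu_0+1)-1$, such that $Y_{\mu_0}$ is contained in the finite union $\bigcup Y_{d_0,k_0}$.

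The case $k_0=k$, i.e. when the destabilizing quotient $F_0$ has full rank $k$, needs a small separate argument: then $F\to F_0$ is a generic isomorphism, so $\deg F_0 \le \deg F = d$, and together with $\mu(F_0)<\mu_0$ this only constrains things when $d < k\mu_0$; once $d \ge k\mu_0$ this case simply cannot occur for $[E\to F]\in Y_{\mu_0}$ (for if $\rk F_0 = k$ then $\mu(F_0)\ge \mu(F) - (\text{length of torsion kernel})/k$; more carefully, any rank-$k$ quotient of a rank-$k$ torsion-free $F$ has degree $\ge \deg F$, wait — it has degree $\le \deg F$, but it is itself torsion-free of rank $k$ so $\mu_{\min}$ of $F$ measured via full-rank quotients is just $\deg F/k$). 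I would phrase this as: for $d$ large the minimal slope is always realized by a proper subquotient, so WLOG $k_0<k$; alternatively absorb the full-rank case by noting $\dim Y_{\mu_0}$ is trivially bounded by $\dim Q^{\tf}_{k,d}(E)$ which for these finitely many small $d$ is irrelevant to an asymptotic statement, but since the lemma is stated for all $d\ge\alpha_3$ I would just enlarge $\alpha_3$ past $k\mu_0$.

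Granting the reduction to $0<k_0<k$, Lemma \ref{lemma dimension Y} gives constants $C_2(E,k,k_0,d_0)$, $\alpha_2(E,k,k_0,d_0)$ with $\dim Y_{d_0,k_0}\le d(r-k_0)+C_2$ for $d\ge\alpha_2$. Since $r-k_0 \le r-1$ for every relevant $k_0\ge 1$, and the set of pairs $(k_0,d_0)$ is finite, I set $\alpha_3 := \max\{k\mu_0,\ \max_{(k_0,d_0)}\alpha_2(E,k,k_0,d_0)\}$ and $C_3 := \max_{(k_0,d_0)}\{C_2(E,k,k_0,d_0)\}$ (taking the max over the finitely many pairs, and noting $d(r-k_0)\le d(r-1)$ for $d\ge 0$). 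Then for $d\ge\alpha_3$,
$$\dim Y_{\mu_0} \le \max_{(k_0,d_0)}\dim Y_{d_0,k_0} \le d(r-1)+C_3\,,$$
which is the claim. The main obstacle is bookkeeping rather than conceptual: correctly pinning down the finite range of $(k_0,d_0)$ using both the upper bound from $\mu_0$ (via Lemma \ref{lemma degree of pullback}) and the lower bound $m(\pi^*E,k_0)$ (via Lemma \ref{d_k semicontinuity} applied to the single sheaf $\pi^*E$), and cleanly disposing of the full-rank quotient case so that every $r-k_0$ appearing is genuinely $\le r-1$.
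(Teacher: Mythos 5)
Your proof is correct and follows essentially the same route as the paper: decompose $Y_{\mu_0}$ into finitely many loci $Y_{d_0,k_0}$ via a destabilizing quotient of $F$, pin down the finite range of $(k_0,d_0)$ using Lemma \ref{lemma degree of pullback} together with a lower bound on the degree of such a quotient, and conclude with Lemma \ref{lemma dimension Y}. Your explicit disposal of the full-rank case by enlarging $\alpha_3$ past $k\mu_0$ addresses a point the paper leaves implicit (it simply asserts $1\leq\rk(F')\leq k-1$ for the minimal destabilizing quotient), so that extra care is welcome.
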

\begin{proof}
    Let $[q:E \to F] \in Y_{\mu_0}$ be a closed point.
    Let $F \to F'\to 0$ denote the minimal destabilizing quotient of $F$.
    Note that $F'$ is torsion-free and $\mu(F') = \mu_{\min}(F) < \mu_0$.
    Again $F'$ is a quotient of $E$, so $\mu(F') \geqslant \mu_{\min}(E)$.
    Thus $\mu_{\min}(E) \leqslant \mu(F') < \mu_0$.
    As $1 \leqslant \rk(F') \leqslant k-1$, we have 
    $\mu_{\min}(E) \leqslant \deg(F') < (k-1)\mu_0$.
    Using Lemma \ref{lemma degree of pullback}, we see that
    $\mu_{\min}(E) \leqslant \deg(\pi^* F') < (k-1)\mu_0+k$.
    In particular, there are only finitely many choices for the degree of $\pi^* F'$.
    Recall the locus $Y_{d_0,k_0}$ from \eqref{equation definition Y}.
    As $\pi^*F'$ is a quotient of $\pi^*F$, 
    the point $[q:E \to F]$ is in the locus 
    $$Y_{\deg(\pi^* F'),\rank(\pi^*F')}\,.$$
    Thus, we can write $Y_{\mu_0}$ as a union of finitely many subsets 
    $Y_{d_0,k_0}$ with $1 \leqslant k_0 \leqslant k-1$.
    Now the lemma follows easily using Lemma \ref{lemma dimension Y}.
\end{proof}

Before going to the next lemma, let us recall the following result.
\begin{lemma}[Corollary in Appendix, Page 87 of \cite{OS79}]\label{lemma oda seshadri}
Let $A \to B$ be a flat local homomorphism of noetherian local rings.
Let $N$ be a $B$-module of finite type which is $A$-flat and satisfies $\Ext^1(\ol N,\ol B) = 0$, 
where $\ol B=B \otimes_A k$ and $\ol N= N \otimes_A k$ for the residue field $k$ of $A$.
Then $\Hom_B(N, B)$ is $A$-flat and 
$\Hom_B(N,B) \otimes_A k = \Hom_{\ol B}(\ol N,\ol B)$.
\end{lemma}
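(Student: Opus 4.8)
The plan is to prove this as an instance of ``cohomology and base change'' for the $\Hom$ functor; it is, in fact, the content of the cited appendix of \cite{OS79}, and I would reduce it to the exchange lemma for complexes of flat modules. First I would choose a resolution $F_\bullet \to N$ by finite free $B$-modules. Since $N$ is $A$-flat, the long exact $\Tor^A$-sequences give, by induction, that every syzygy $Z_i := \ker(F_i \to F_{i-1})$ is $A$-flat; consequently each short exact sequence $0 \to Z_{i+1} \to F_i \to Z_i \to 0$ remains exact after $-\otimes_A T$ for any $A$-module $T$. Splicing these exact sequences shows that $F_\bullet \otimes_A T$ resolves $N \otimes_A T$ over $B \otimes_A T$; in particular $\ol F_\bullet := F_\bullet \otimes_A k$ is a free resolution of $\ol N$ over $\ol B$.

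Next I would apply $\Hom_B(-,B)$ to $F_\bullet$, producing a complex $G^\bullet$ of $A$-flat modules, finite over $B$, with $H^i(G^\bullet) = \Ext^i_B(N,B)$; in degree zero, $H^0(G^\bullet) = \Hom_B(N,B)$. As each $F_i$ is free over $B$, one gets an identification $G^\bullet \otimes_A k = \Hom_{\ol B}(\ol F_\bullet, \ol B)$ of complexes, whence $H^i(G^\bullet \otimes_A k) = \Ext^i_{\ol B}(\ol N, \ol B)$; in particular $H^0(G^\bullet \otimes_A k) = \Hom_{\ol B}(\ol N, \ol B)$ and, by hypothesis, $H^1(G^\bullet \otimes_A k) = 0$.

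The final step is to invoke the exchange lemma for a complex of flat $A$-modules: the vanishing $H^1(G^\bullet \otimes_A k) = 0$ forces the base-change map $H^0(G^\bullet) \otimes_A T \to H^0(G^\bullet \otimes_A T)$ to be an isomorphism for every $A$-module $T$, and forces $H^0(G^\bullet) = \Hom_B(N,B)$ to be $A$-flat. Taking $T = k$ then yields $\Hom_B(N,B) \otimes_A k \cong \Hom_{\ol B}(\ol N, \ol B)$, completing the argument.

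The step I expect to be the main obstacle is making this exchange lemma applicable: the terms $G^i$ are finite over $B$ but not over $A$, so Nakayama over $A$ is not directly available (this is exactly why the statement is recorded separately in \cite{OS79} rather than quoted from the usual references). I would circumvent this with the local criterion of flatness --- for a local homomorphism $A \to B$ of Noetherian local rings and a finite $B$-module $M$, $A$-flatness of $M$ is equivalent to $\Tor_1^A(M,k) = 0$ --- which lets the descent of exactness from the closed fibre proceed just as in the finite-$A$-module case. The remaining homological bookkeeping is routine.
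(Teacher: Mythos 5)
Your proof is correct. The paper itself gives no argument for this lemma --- it is quoted verbatim from the appendix of \cite{OS79} --- and your reconstruction is the standard one: the reduction to a finite free $B$-resolution $F_\bullet$ whose syzygies are $A$-flat, the identification $\Hom_B(F_\bullet,B)\otimes_A k \cong \Hom_{\ol B}(\ol F_\bullet,\ol B)$, and the exchange argument are all sound, and you correctly isolate the one genuine issue (the terms of $G^\bullet$ are finite over $B$, not over $A$) together with its correct resolution, namely the local criterion of flatness in the form ``$u\colon M\to N$ with $N$ $A$-flat is injective with $A$-flat cokernel iff $u\otimes_A k$ is injective,'' applied to $\mathrm{coker}(d^0)\to G^2$, which is exactly how the cited appendix proceeds.
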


\begin{lemma}\label{lemma dual isomorphism}
For any degree $d$, there is a map of Quot schemes
$$\V : Q_{k,d}^{\tf}(E) \to Q^{\tf}_{r-k,d-e}(E^\vee)$$
which sends a closed point $[q:E \to F]$ to the point $[E^\vee \to (\ker q)^\vee]$.
Moreover, the map $\V$ is an isomorphism.
\end{lemma}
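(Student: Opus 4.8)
The plan is to construct $\V$ by dualizing the universal short exact sequence on $C\times Q^{\tf}_{k,d}(E)$, and to exhibit its inverse by dualizing once more; the essential inputs are that torsion-free sheaves on the nodal (hence Gorenstein) curve $C$ are reflexive and satisfy $\EXT^i(-,\O_C)=0$ for $i\geqslant 1$. On closed points the recipe is forced: for $[q:E\to F]\in Q^{\tf}_{k,d}(E)$ put $S=\ker q$, a torsion-free sheaf of rank $r-k$ and degree $e-d$. Since $F$ is torsion-free and every $\O_{C,x}$ is Gorenstein of dimension one, the stalk $F_x$ is a maximal Cohen--Macaulay module, so $\EXT^i(F,\O_C)=0$ for $i\geqslant 1$; dualizing $0\to S\to E\to F\to 0$ therefore yields a short exact sequence $0\to F^\vee\to E^\vee\to S^\vee\to 0$. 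Thus $q^\vee:E^\vee\to S^\vee$ is a torsion-free quotient of rank $r-k$, and a Riemann--Roch computation together with Serre duality on the Cohen--Macaulay curve $C$ (using $\EXT^{>0}(S,\omega_C)=0$ and that $\omega_C$ is a line bundle, which gives $\deg G^\vee=-\deg G$ for torsion-free $G$) shows $\deg S^\vee=d-e$. Hence $[E^\vee\to S^\vee]=[E^\vee\to(\ker q)^\vee]$ lies in $Q^{\tf}_{r-k,d-e}(E^\vee)$, as in the statement.

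To upgrade this to a morphism, let $\F$ be the universal quotient on $C\times Q^{\tf}_{k,d}(E)$ and $\S=\ker(p_C^*E\to\F)$, which is flat over $Q^{\tf}_{k,d}(E)$ with torsion-free fibres. I would apply Lemma \ref{lemma oda seshadri} with $A=\O_{Q,t}$, $B=\O_{C\times Q,(c,t)}$ and $N=\S_{(c,t)}$: the hypothesis $\Ext^1_{\ol B}(\ol N,\ol B)=0$ holds because $\ol N$ is a torsion-free $\O_{C,c}$-module. It follows that $\S^\vee:=\HOM(\S,\O_{C\times Q})$ is flat over $Q^{\tf}_{k,d}(E)$, its formation commutes with base change, and its fibre over $[q:E\to F]$ is $S_F^\vee$. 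The natural map $p_C^*E^\vee\to\S^\vee$ dual to $\S\hookrightarrow p_C^*E$ restricts on each fibre to the surjection $E^\vee\to S_F^\vee$ found above, hence is itself surjective; this family of quotients of $p_C^*E^\vee$ defines a morphism $Q^{\tf}_{k,d}(E)\to\Quot_{C/\C}(E^\vee,r-k,d-e)$, which factors through the open subscheme $Q^{\tf}_{r-k,d-e}(E^\vee)$ since the fibres are torsion-free. This is $\V$, and it acts as claimed on closed points.

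Finally, since $E^\vee$ is again locally free of rank $r$ and degree $-e$, the same construction applied to $E^\vee$ yields a morphism $\V':Q^{\tf}_{r-k,d-e}(E^\vee)\to Q^{\tf}_{k,d}(E)$, using the canonical identification $E^{\vv}\cong E$. To see that $\V$ and $\V'$ are mutually inverse I would compare universal families: dualizing $0\to\S\to p_C^*E\to\F\to 0$ once and running the Oda--Seshadri argument again for $\F$ (whose fibres are torsion-free) identifies $\ker(p_C^*E^\vee\to\S^\vee)$ with $\HOM(\F,\O_{C\times Q})$, compatibly with base change; dualizing once more and invoking reflexivity of torsion-free sheaves on $C$ (checked locally, e.g. $\Hom_{\O_{C,x}}(\m_x,\O_{C,x})\cong\m_x$, so $G^{\vv}\cong G$ canonically for torsion-free $G$) recovers the original universal quotient up to canonical isomorphism. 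Hence $\V'\circ\V$ classifies the universal quotient of $Q^{\tf}_{k,d}(E)$, so $\V'\circ\V=\id$, and symmetrically $\V\circ\V'=\id$; thus $\V$ is an isomorphism. The main obstacle is precisely this last step: controlling the base-change behaviour and the exactness of the dualized sequences in families (where Lemma \ref{lemma oda seshadri} does the work) and the reflexivity of torsion-free sheaves at the node; everything else is just the ``dualize the defining sequence'' idea carried out carefully.
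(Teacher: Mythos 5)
Your proposal is correct and follows essentially the same route as the paper: construct $\V$ by dualizing the universal exact sequence, use the Oda--Seshadri lemma (Lemma \ref{lemma oda seshadri}) to get flatness and base change of the duals, deduce surjectivity of $p_C^*E^\vee\to\S^\vee$ from the fibrewise sequences via Nakayama, and prove $\V'\circ\V=\id$ by reflexivity of torsion-free sheaves on the nodal curve. The only cosmetic difference is that the paper applies Oda--Seshadri to both $\F$ and $\S$ and reads off $\EXT^1(\F,\O)=0$ from the four-term dualized sequence, whereas you invoke the local Gorenstein/MCM vanishing directly on fibres; the substance is identical.
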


\begin{proof}
Since $E,k$ and $d$ are fixed in this lemma, let us write $Q^{\tf}$ instead of $Q_{k,d}^{\tf}(E)$ in this proof.
    Let $p_C: C \times Q^{\tf} \to C$ denote the projection map.
    Consider the universal short exact sequence 
    on $C \times Q^{\tf}$
    \begin{equation}\label{ses universal ses}
        0 \to \S \to p_C^*E \to \F \to 0\,.
    \end{equation}
    The sheaves $\F$ and $\S$ are flat over $Q^{\tf}$.
    Consider the duals
    \begin{equation*}
     \F^\vee := \HOM(\F,\O_{C \times Q^{\tf}}) \quad \text{ and } \quad
     \S^\vee := \HOM(\S,\O_{C \times Q^{\tf}})\,.
    \end{equation*}
We claim that the sheaf $\F^\vee$ is flat over $Q^{\tf}$.
Since flatness is a local property, it is enough to check that the stalk $(\F^\vee)_{(c,q)}$ is flat over the local ring $\O_{Q^{\tf},q}$ for any closed point $(c,q) \in C \times Q^{\tf}$.
Let $(c,[q:E \to F])$ be a closed point of $C \times Q^{\tf}$.
Taking $A=\O_{Q^{\tf},q}$, 
$B= \O_{C,c} \otimes \O_{Q^{\tf},q}$
and $N=(\F)_{(c,q)}$,
we are in the situation of Lemma \ref{lemma oda seshadri}.
Note that $\ol B$ is the local ring $\O_{C,c}$
and $\ol N$ is the stalk $F_c$.
As $F$ is a torsion-free sheaf over $C$,
so the stalk $F_c$ is a torsion-free $\ol B$-module.
If $c$ is a non-singular point then clearly $\Ext^1_{\O_{C,c}}(F_c,\O_{C,c})=0$.
For the node $x$, we have $\Ext^1_{\O_{C,x}}(\m_x,\O_{C,x})=0$ (for example see \cite[Lemma 2.1]{bh06}).
Hence, $\Ext^1_{\O_{C,x}}(F_x,\O_{C,x})=0$.
This gives us $Ext^1_{\ol B}(\ol N, \ol B)=0$.
Using Lemma \ref{lemma oda seshadri}, 
we conclude that $(\F^\vee)_{(c,q)}$ is $\O_{Q^{\tf},q}$-flat, 
hence, $\F^\vee$ is $Q^{\tf}$-flat, and
the natural map $\F^\vee \vert_{C \times \{q\}} \longrightarrow F^\vee $ is an isomorphism.
Similarly, replacing $\F$ by $\S$ in the above argument we get that $\S^\vee$ is $Q^{\tf}$-flat
and the natural map $\S^\vee \vert_{C \times \{q\}} \longrightarrow S^\vee $ is an isomorphism.

Dualizing the short exact sequence \eqref{ses universal ses}
we get the exact sequence
\begin{equation}\label{ses dual of universal ses}
    0 \to \F^\vee \to (p_C^*E)^\vee \to \S^\vee \to \EXT^1(\F,\O_{C \times Q^{\tf}}) \to 0\,.
\end{equation}
Restricting on the fiber $C \times \{q\}$, we get a sequence of maps
$$ \F^\vee\vert_{C \times \{q\}} \to (p_C^*E)^\vee \vert_{C \times \{q\}} \to \S^\vee\vert_{C \times \{q\}} \to \EXT^1(\F,\O_{C \times Q^{\tf}})\vert_{C \times \{q\}}$$
such that the rightmost map is surjective.
Restriction of the short exact sequence \eqref{ses universal ses} on the fiber $C \times \{q\}$ gives the short exact sequence
\begin{equation}\label{ses normal ses}
    0 \to S_F \to E \to F \to 0 \,.
\end{equation}
Dualizing the short exact sequence \eqref{ses normal ses},
by Lemma 2.2(1) of \cite{bh06}
we get another short exact sequence
\begin{equation}\label{ses dual of normal ses}
    0 \to F^\vee \to E^\vee \to (S_F)^\vee \to 0\,.
\end{equation}
So we have the following diagram in which the vertical arrows are the natural maps and the bottom sequence is exact
\begin{equation}\label{}
    \begin{tikzcd}
    & \F^\vee\vert_{C \times \{q\}} \arrow{r}{} \arrow{d}{\cong} & (p_C^*E)^\vee \vert_{C \times \{q\}} \ar[d,-,double equal sign distance,double] \arrow{r}{} &  \S^\vee\vert_{C \times \{q\}} \arrow{d}{\cong}\\
    0 \arrow{r}{} & F^\vee \arrow{r}{} & E^\vee \arrow{r}{} & (S_F)^\vee\arrow{r}{} & 0 \,.
    \end{tikzcd}
\end{equation}
The squares commute by the naturality of vertical maps.
It follows from the diagram that
the map 
$(p_C^*E)^\vee \vert_{C \times \{q\}} \longrightarrow \S^\vee\vert_{C \times \{q\}}$ is surjective.
Using Nakayama Lemma we conclude that the map
$(p_C^*E)^\vee \longrightarrow \S^\vee$ is surjective.
So we conclude that $\EXT^1(\F,\O_{C \times Q^{\tf}}) =0$
and we have a short exact sequence
$$0 \to \F^\vee \to (p_C^*E)^\vee \to \S^\vee \to 0\,.$$
The quotient $(p_C^*E)^\vee \to \S^\vee \to 0$ defines a map of schemes 
$$ \V : Q^{\tf} \to Q_{r-k,d-e}(E^\vee)\,.$$
which sends the closed point $[q:E \to F]$ to the point $E^\vee \to (\ker q)^\vee \to 0$.
It is clear that the image of $\V$ lies inside the torsion-free locus $Q^{\tf}_{r-k,d-e}(E^\vee)$.
Thus we have a map
$$\V : Q_{k,d}^{\tf}(E) \longrightarrow Q^{\tf}_{r-k,d-e}(E^\vee)\,.$$

Similarly we can get a map in the reverse direction
$$\V' : Q^{\tf}_{r-k,d-e}(E^\vee) \longrightarrow  Q_{k,d}^{\tf}(E) \,.$$
To see that $\V' \circ \V = \Id$,
we need to show that the natural map $\F \to \F^{\vee\vee}$ is an isomorphism.
Lemma 2.6(i) of \cite{bh06} says that for any torsion-free sheaf $G$ on $C$, the natural map  $G \to G^{\vee\vee}$ is an isomorphism.
Using this, it is easy to see that the map $\F \to \F^{\vee\vee}$ is surjective.
Using flatness of $\F^{\vee\vee}$ over $Q_{k,d}^{\tf}(E)$, one can check 
that the map $\F \to \F^{\vee\vee}$ is also an inclusion 
and hence an isomorphism.
Similarly one can prove that $\V \circ \V' = \Id$.
It follows that $\V$ is an isomorphism.
\end{proof}

\begin{lemma}\label{lemma dimension Xmu}
Given a number $\mu_0$, 
define the subset $X_{\mu_0}$ of $Q^{\tf}_{k,d}(E)$ as follows
$$X_{\mu_0} = \{ [q:E \to F] \in Q^{\tf}_{k,d}(E): \mu_{\max}(\ker q) > \mu_0 \}\,.$$
Then there exist constants $C_4(E,k,\mu_0)$ and $\alpha_4(E,k,\mu_0)$ such that for $d \geqslant \alpha_4$,
$$\dim X_{\mu_0} \leqslant  d(r-1) + C_4 \,.$$
\end{lemma}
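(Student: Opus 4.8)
The plan is to reduce this statement to Lemma \ref{lemma dimension Ymu} by passing through the duality isomorphism of Lemma \ref{lemma dual isomorphism}. Recall that $\V : Q^{\tf}_{k,d}(E) \to Q^{\tf}_{r-k,\,d-e}(E^\vee)$ is an isomorphism sending $[q:E\to F]$ to $[E^\vee \to (\ker q)^\vee]$. I claim there is a number $\mu_1 = \mu_1(r,k,\mu_0)$, independent of $d$, such that $\V$ carries $X_{\mu_0}$ into the locus
\[
 Y_{\mu_1} \;=\; \{\, [E^\vee \to G] \in Q^{\tf}_{r-k,\,d-e}(E^\vee) : \mu_{\min}(G) < \mu_1 \,\}\,.
\]
Granting this, since $\V$ is an isomorphism we get $\dim X_{\mu_0} = \dim \V(X_{\mu_0}) \le \dim Y_{\mu_1}$, and Lemma \ref{lemma dimension Ymu} applied to the rank-$r$ bundle $E^\vee$, with quotients of rank $r-k$ and degree $d-e$, yields constants $C_3(E^\vee,r-k,\mu_1)$ and $\alpha_3(E^\vee,r-k,\mu_1)$ with $\dim Y_{\mu_1} \le (d-e)(r-1) + C_3$ whenever $d-e \ge \alpha_3$; then $C_4 := C_3 - e(r-1)$ and $\alpha_4 := \alpha_3 + e$ do the job. (If $r-k = 1$, then $Y_{\mu_1}$ is empty as soon as $d - e > \mu_1$, since the unique quotient sheaf there has slope $d-e$; so the bound is vacuous for large $d$, and Lemma \ref{lemma dimension Ymu}, whose proof uses a quotient of intermediate rank, is not even needed in that case.)

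To prove the claim I would argue as follows. Let $[q:E\to F] \in X_{\mu_0}$ and put $S := \ker q$, a torsion-free sheaf of rank $r-k$ and degree $e-d$, so that $\V([q]) = [E^\vee \to S^\vee]$. Since $\mu_{\max}(S) > \mu_0$, choose a subsheaf of $S$ of slope $> \mu_0$ and replace it by its saturation $S_1 \subseteq S$; then $\mu(S_1) > \mu_0$ and $S/S_1$ is torsion-free. Applying $\HOM(-,\O_C)$ to $0 \to S_1 \to S \to S/S_1 \to 0$ and using that $\EXT^1(T,\O_C) = 0$ for every torsion-free sheaf $T$ on $C$ — which follows from $\Ext^1_{\O_{C,x}}(\m_x,\O_{C,x}) = 0$, recalled from \cite[Lemma 2.1]{bh06} in the proof of Lemma \ref{lemma dual isomorphism} — one obtains a short exact sequence $0 \to (S/S_1)^\vee \to S^\vee \to S_1^\vee \to 0$, in particular a quotient $S^\vee \twoheadrightarrow S_1^\vee$. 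An elementary degree computation, with the torsion/type contribution bounded by a constant depending only on $r$ and $k$, gives $\mu(S_1^\vee) < -\mu_0 + c$ for a constant $c = c(r,k)$. Hence $\mu_{\min}(S^\vee) \le \mu(S_1^\vee) < -\mu_0 + c$, so taking $\mu_1 := -\mu_0 + c$ proves the claim.

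The main obstacle is exactly this claim: converting the statement ``$\ker q$ contains a subsheaf of large slope'' into ``$(\ker q)^\vee$ has a quotient of small slope.'' On a smooth curve this is the elementary fact $\mu_{\min}(S^\vee) = -\mu_{\max}(S)$; on the nodal curve it requires knowing that the dual of a short exact sequence of torsion-free sheaves remains exact (the vanishing of $\EXT^1(-,\O_C)$ on torsion-free sheaves) and that the type contribution to the degree of a dual is bounded. Both ingredients are already available in the paper, and since the type is at most the rank, the correction is absorbed into the additive constant $C_4$; so, once the bookkeeping is set up, no genuinely new difficulty arises beyond those already dealt with above.
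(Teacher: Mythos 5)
Your proposal is correct and follows essentially the same route as the paper: apply the duality isomorphism $\V$ of Lemma \ref{lemma dual isomorphism} to carry $X_{\mu_0}$ into a locus of the form $Y_{\bullet}$ inside $Q^{\tf}_{r-k,d-e}(E^\vee)$ and then invoke Lemma \ref{lemma dimension Ymu} for $E^\vee$. The only difference is that you allow a correction constant $c(r,k)$ in deducing $\mu_{\min}((\ker q)^\vee)<-\mu_0+c$ from $\mu_{\max}(\ker q)>\mu_0$, whereas the paper asserts this with $c=0$; in fact $c=0$ is correct (by \cite[Lemma 2.5(B)]{bh06} with target $\O_C$ one gets $\deg(T^\vee)=-\deg(T)$ exactly for torsion-free $T$), so your extra constant is harmless but unnecessary, and otherwise your saturation-plus-dualization argument is precisely the justification the paper leaves implicit.
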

\begin{proof}
Let $[q: E \to F]$ be a closed point in $X_{\mu_0}$ and $S_F$ denote the kernel of $q$. 
Then we have $\mu_{\max}(S_F) > \mu_0$ which implies that $\mu_{\min}(S_F^\vee)<-\mu_0$.
We consider the map
$$\V : Q_{k,d}^{\tf}(E) \to Q^{\tf}_{r-k,d-e}(E^\vee)$$
which sends the point $[q: E \to F]$ to the point $[E^\vee \to S_F^\vee]$.
Replacing $E$ by $E^\vee$ in Lemma \ref{lemma dimension Ymu}, 
we consider the subset $Y_{-\mu_0}$ of $ Q^{\tf}_{r-k,d-e}(E^\vee)$, which contains all the points $[E^\vee \to G]$ such that $\mu_{\min}(G) < -\mu_0 $.
It follows that the image $\V(X_{\mu_0})$ is same as the subset $Y_{-\mu_0}$ in $ Q^{\tf}_{r-k,d-e}(E^\vee)$.
By Lemma \ref{lemma dimension Ymu}, we get constants $C'_3:=C_3(E^\vee,k,-\mu_0)$ and $\alpha_3':=\alpha_3(E^\vee,k,-\mu_0)$
such that $\dim Y_{-\mu_0} \leqslant (d-e)(r-1)+C'_3$ for $d \geqslant \alpha'_3$.
As $\V$ is bijective on points,
we get that for $d \geqslant \alpha'_3$,
$$ \dim X_{\mu_0} = \dim (\V(X_{\mu_0})) \leqslant (d-e)(r-1)+C'_3\,.$$
Taking $\alpha_4:=\alpha'_3$ and $C_4:= -e(r-1)+C'_3$,
the Lemma follows.
\end{proof}

\section{Irreducible components containing stable locally free quotients}
\label{section component stable locally free}
In this section we will show that for large enough $d$, there is a unique irreducible component of $Q_{k,d}(E)$ whose general point corresponds to a stable locally free quotient.
Before proving the theorem we prove few lemmas which will be required in the proof of the theorem.
\begin{lemma}\label{lemma gg and H1 vanishing of EdualF}
There exists an integer $d_1 = d_1(E,k)$ such that for all stable locally free sheaf $F$ of rank $k$ and degree $d \geqslant d_1$ on $C$, the sheaf
$E^\vee \otimes F$ is generated by global sections and $H^1(E^\vee \otimes F) = 0$.
\end{lemma}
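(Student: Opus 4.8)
The plan is to reduce everything to a uniform bound obtained from a boundedness argument together with Serre vanishing. First I would fix an ample line bundle $\O_C(1)$ on $C$ and recall that a stable locally free sheaf $F$ of rank $k$ and fixed degree $d$ on $C$ has bounded Castelnuovo--Mumford regularity: the family of all such $F$ (as $d$ varies only through its effect on twisting) is not itself bounded, but the key point is that $\mu_{\min}(F) = \mu(F) = d/k$ grows linearly in $d$ because $F$ is \emph{stable}. So for $d$ large, $E^\vee \otimes F$ has all its Harder--Narasimhan slopes large. Precisely, writing $\mu_{\min}(E^\vee \otimes F) \geqslant \mu_{\min}(E^\vee) + \mu_{\min}(F) = -\mu_{\max}(E) + d/k$, one sees this exceeds any prescribed bound once $d \geqslant d_1$ for a suitable $d_1 = d_1(E,k)$.

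**Vanishing of $H^1$.** For the cohomology vanishing I would use the standard fact (valid on an integral projective curve, nodal or not, via the argument with torsion-free sheaves in \cite{bh06} or directly by Serre duality and stability) that a torsion-free sheaf $G$ on $C$ with $\mu_{\min}(G) > 2g - 2$ satisfies $H^1(C, G) = 0$: indeed $H^1(C,G) = \Hom_C(G, \omega_C)^\vee$ by Serre duality, and any nonzero map $G \to \omega_C$ would produce a quotient of $G$ of slope $\leqslant \deg \omega_C = 2g-2 < \mu_{\min}(G)$, a contradiction. Since $E^\vee \otimes F$ is locally free hence torsion-free, and $\mu_{\min}(E^\vee\otimes F) \to \infty$ with $d$, we get $H^1(E^\vee \otimes F) = 0$ for $d$ large. (One must be slightly careful that $\mu_{\min}$ of a tensor product is controlled; on a curve this follows because tensoring with a fixed vector bundle shifts all HN-slopes by at most $\mu_{\max}(E^\vee) - \mu_{\min}(E^\vee)$ in a controlled way, which is enough for the asymptotic statement.)

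**Global generation.** For global generation I would argue that $E^\vee \otimes F$ is globally generated as soon as, for every closed point $c \in C$, the twist $(E^\vee \otimes F)(-c)$ — or more precisely $(E^\vee\otimes F) \otimes \m_c$ — has vanishing $H^1$; by the same $\mu_{\min}$ estimate as above (subtracting the bounded correction coming from tensoring with the ideal sheaf of a point, which only lowers slopes by a bounded amount even at the node), this holds uniformly for $d \geqslant d_1$. Concretely: for each $c$, there is a short exact sequence $0 \to (E^\vee\otimes F)\otimes\m_c \to E^\vee\otimes F \to (E^\vee\otimes F)\otimes\kappa(c) \to 0$, and $H^1$ of the left term vanishes once $\mu_{\min}$ of its torsion-free part exceeds $2g-2$; the long exact sequence then shows $H^0(E^\vee\otimes F) \to (E^\vee\otimes F)\otimes\kappa(c)$ is surjective. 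Taking $d_1$ large enough that the slope bounds hold simultaneously for the finitely many relevant constructions gives the claim.

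**Main obstacle.** The only real subtlety — and the place I would be most careful — is that all the slope estimates must be made \emph{uniform in $F$}, not just for a single stable bundle, and must account for the node: a torsion-free sheaf on $C$ need not be locally free, and the short exact sequences relating a sheaf, its pullback to the normalization, and the node (as in Lemma \ref{lemma degree of pullback}) introduce bounded corrections that one must check do not grow with $d$. Here $E^\vee \otimes F$ is genuinely locally free since $E$ and $F$ are, so the worst pathologies are avoided, and the bounded corrections come only from tensoring with ideal sheaves of points. Collecting these finitely many bounded quantities into a single constant and choosing $d_1$ beyond the resulting threshold completes the proof.
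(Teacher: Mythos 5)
Your argument is correct and is essentially the proof the paper intends: the paper simply cites \cite[Lemma 6.1]{PR03} and says the smooth-curve proof carries over, and that proof is exactly your combination of stability of $F$ (forcing the relevant slopes to grow like $d/k$), Serre duality for the lci curve $C$, and twisting by ideal sheaves of points for global generation, with the nodal corrections you describe being bounded independently of $d$. The one place to be careful, which you do flag, is the slope bound for $E^\vee \otimes F$: you do not need the tensor-product theorem (which is delicate on singular curves), since either writing $E^\vee$ as a quotient of $\O_C(-m)^{\oplus N}$, or more directly dualizing to $\Hom(F, E \otimes \omega^{\circ}_C)$ and using stability of $F$ alone to bound quotients of $F$ from below, already gives the required uniform estimate.
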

\begin{proof}
    This can be proved exactly as it is proved in smooth curve case in \cite[Lemma 6.1]{PR03}.
\end{proof}

\begin{lemma}\label{lemma H1 vanishing of S_FdualF}
    If $d \geqslant d_1$, then for any quotient $E \to F \to 0$ where $F$ is a stable locally free sheaf of rank $k$ and degree $d$, we have $\h^1(S_F^\vee \otimes F)=0$ where $S_F$ is the kernel of the quotient $E \to F$.
    As a consequence we have 
    $$\chi(S_F^\vee \otimes F) = h^0(S_F^\vee \otimes F) = dr-ke+k(r-k)(1-g)$$
\end{lemma}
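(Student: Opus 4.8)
The plan is to deduce the $H^1$-vanishing from Lemma \ref{lemma gg and H1 vanishing of EdualF} via a short exact sequence, and then obtain $\h^0$ from Riemann--Roch.

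First I would note that $S_F$ is locally free: it is the kernel of a surjection $E \to F$ of vector bundles onto the vector bundle $F$, so this surjection splits locally on $C$ and $S_F$ is a local direct summand of $E$. In particular $\EXT^1(F,\O_C)=0$, so dualizing the exact sequence $0 \to S_F \to E \to F \to 0$ produces a short exact sequence of vector bundles
$$ 0 \to F^\vee \to E^\vee \to S_F^\vee \to 0\,. $$
Tensoring with the locally free sheaf $F$ preserves exactness, so
$$ 0 \to F^\vee \otimes F \to E^\vee \otimes F \to S_F^\vee \otimes F \to 0\,, $$
and the relevant portion of the long exact cohomology sequence is
$$ H^1(E^\vee \otimes F) \longrightarrow H^1(S_F^\vee \otimes F) \longrightarrow H^2(F^\vee \otimes F)\,. $$
The last term vanishes since $C$ is a curve, and the first term vanishes by Lemma \ref{lemma gg and H1 vanishing of EdualF} because $d \geqslant d_1$ and $F$ is stable locally free of degree $d$. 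Hence $\h^1(S_F^\vee \otimes F)=0$.

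For the consequence, vanishing of $\h^1$ gives $\h^0(S_F^\vee \otimes F)=\chi(S_F^\vee \otimes F)$, so it remains to compute this Euler characteristic. The bundle $S_F^\vee \otimes F$ has rank $k(r-k)$, and from $0 \to S_F \to E \to F \to 0$ one gets $\deg S_F = e-d$; using $\det(S_F^\vee\otimes F)\cong (\det S_F^\vee)^{\otimes k}\otimes(\det F)^{\otimes(r-k)}$ and additivity of degree for line bundles, $\deg(S_F^\vee\otimes F)=k(d-e)+(r-k)d=dr-ke$. Substituting into \eqref{degree definition} yields $\chi(S_F^\vee\otimes F)=dr-ke+k(r-k)(1-g)$, hence $\h^0(S_F^\vee\otimes F)=dr-ke+k(r-k)(1-g)$. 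One could instead note that $S_F$ and $F$ are locally free, so $\chi(S_F^\vee\otimes F)=\hom(S_F,F)-\ext^1(S_F,F)$, which equals $dr-ke+k(r-k)(1-g)$ by Lemmas \ref{lemma type of kernel} and \ref{Lemma hom(B,A) - ext1(B,A) torsionfree}, both sheaves having type $0$.

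There is essentially no obstacle here: the argument is a formal consequence of Lemma \ref{lemma gg and H1 vanishing of EdualF}. The only points needing a word of care are that $S_F$ is genuinely locally free, so that dualizing and tensoring remain exact on the nodal curve, and that the degree computation for $S_F^\vee\otimes F$ is valid on $C$ --- which it is, since via determinants it reduces to additivity of degree for line bundles on a projective curve.
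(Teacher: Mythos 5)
Your proof is correct and follows essentially the same route as the paper: both deduce $\h^1(S_F^\vee\otimes F)=0$ from $\h^1(E^\vee\otimes F)=0$ via the long exact cohomology sequence of $0 \to F^\vee\otimes F \to E^\vee\otimes F \to S_F^\vee\otimes F \to 0$, and then compute $\chi(S_F^\vee\otimes F)$ from the degree formula \eqref{degree definition}. You simply spell out more details (local freeness of $S_F$, exactness of dualizing and tensoring, the determinant computation of the degree) that the paper leaves implicit.
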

\begin{proof}
    As $d \geqslant d_1$, from lemma \ref{lemma gg and H1 vanishing of EdualF} we have $h^1(E^\vee \otimes F) = 0$. Using the long exact sequence of cohomology obtained from the short exact sequence $0 \to F^\vee \otimes F \to E^\vee \otimes F \to S_F^\vee \otimes F \to 0$, we get that $h^1(S_F^\vee \otimes F)=0$.
    Using
    \eqref{degree definition} we get $h^0(S_F^\vee \otimes F) = \chi(S_F^\vee \otimes F) = dr-ke+k(r-k)(1-g)$.
\end{proof}

\begin{lemma}\label{lemma generic surjection}
    Let $F$ be a stable locally free sheaf of rank $k$ and degree $d$ such that the sheaf $\HOM(E,F) = E^\vee \otimes F$ is globally generated and $H^1(E^\vee \otimes F)=0$.
    Then a general homomorphism from $E$ to $F$ is surjective.
\end{lemma}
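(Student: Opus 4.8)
The plan is to reduce surjectivity of a general map $E \to F$ to a codimension count inside the affine space $\Hom(E,F)$. Since $E^\vee \otimes F$ is globally generated, for every point $c \in C$ the evaluation map $H^0(E^\vee\otimes F) \to (E^\vee\otimes F)\vert_c = \Hom(E\vert_c, F\vert_c)$ is surjective. Thus the locus of homomorphisms that drop rank at a prescribed point $c$ — i.e.\ whose value in $\Hom(E\vert_c,F\vert_c)$ lies in the determinantal subvariety of non-surjective linear maps between the fibres — has, after pulling back along the evaluation, codimension at least $r-k+1$ in $H^0(E^\vee\otimes F)$ (the codimension of non-surjective maps $\C^r \to \C^k$ in $\Hom(\C^r,\C^k)$ is $r-k+1$). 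Here I am using that $F$ is locally free, so at a smooth point $c$ the fibre $F\vert_c$ has dimension exactly $k$; at the node the fibre has dimension $k+a$ for the type $a$ of $F$, but since $F$ is locally free here $a=0$, so this subtlety does not arise.

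Next I would let $c$ vary. Consider the incidence variety $\Sigma = \{(\phi, c) : \phi\vert_c : E\vert_c \to F\vert_c \text{ is not surjective}\} \subseteq H^0(E^\vee\otimes F) \times C$. By the fibrewise estimate above, each fibre of the projection $\Sigma \to C$ has codimension $\geqslant r-k+1$ in $H^0(E^\vee\otimes F)$, so $\dim\Sigma \leqslant \dim H^0(E^\vee\otimes F) - (r-k+1) + 1 = \dim H^0(E^\vee\otimes F) - (r-k)$. Since $0 < k < r$ we have $r-k \geqslant 1$, so the image of the projection $\Sigma \to H^0(E^\vee\otimes F)$ is a proper closed subset. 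A homomorphism $\phi : E \to F$ of sheaves on a curve is surjective precisely when $\phi\vert_c$ is surjective for every $c \in C$ (surjectivity can be checked on stalks, and by Nakayama on fibres); equivalently, $\phi$ fails to be surjective exactly when $\phi \in \operatorname{pr}_1(\Sigma)$. Hence a general $\phi \in H^0(E^\vee\otimes F) = \Hom(E,F)$ is surjective.

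The main obstacle — though a mild one in this setting — is making sure the fibrewise determinantal estimate is uniform, i.e.\ that the incidence variety $\Sigma$ really is closed and has the asserted dimension over all of $C$, including the node. This is handled by noting that global generation of $E^\vee\otimes F$ gives a surjection $\O_C^{\oplus N} \twoheadrightarrow E^\vee \otimes F$ for some $N$, hence a surjection from the constant sheaf onto each fibre, so the "non-surjective" locus in each fibre is the preimage of a fixed-codimension determinantal cone under a linear surjection $H^0(E^\vee\otimes F) \to (E^\vee\otimes F)\vert_c$; the codimension can only go up, never down. One can alternatively package this via the relative $\operatorname{Quot}$ or a relative determinantal scheme to see $\Sigma$ is a genuine closed subscheme, but the dimension bound is all that is needed. (The hypothesis $H^1(E^\vee\otimes F)=0$ is not strictly used for surjectivity of a general map, but it guarantees $\Hom(E,F)$ has the expected dimension, which is convenient for the later deformation arguments.)
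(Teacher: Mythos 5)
Your argument is correct and is essentially the paper's proof: both reduce the statement to the fact that the non-surjective locus in $\Hom(E\vert_c,F\vert_c)$ is determinantal of codimension $r-k+1$, use global generation to transfer that codimension to $\Hom(E,F)$ via the surjective evaluation maps, and lose one dimension sweeping $c$ over the curve, leaving codimension $r-k\geqslant 1$. The only difference is packaging — you work with the incidence variety in the affine space $H^0(E^\vee\otimes F)\times C$, whereas the paper projectivizes and must therefore treat separately the locus $T$ of maps vanishing at some point and verify flatness of the evaluation morphism; your affine formulation sidesteps both.
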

\begin{proof}
The idea of the proof is taken from the proof of \cite[Lemma 7.12]{gs22}.
Consider the space $ \P(\Hom(E,F)^\vee)$ which parametrizes non-zero maps $E \to F$.
Let $T$ be the subset of $\P(\Hom(E,F)^\vee)$ which contains all the maps $\vp: E \to F$ such that $\vp \vert_c =0$ for atleast one point $c \in C$. 
Then $T$ is a closed subset and as a set 
$$T  = \bigcup_{c \in C} \P(\Hom(E,F\otimes \m_c)^\vee)$$
where $\m_c$ denotes the sheaf of ideals of the point $c$.
As $E^\vee \otimes F$ is globally generated and $H^1(E^\vee \otimes F) = 0$,
it follows that $H^1(E^\vee \otimes F\otimes \m_c) = 0$ for every closed point $c \in C$.
Using this one can check that 
\begin{equation}
    \dim(\P(\Hom(E,F)^\vee)) - \dim T \geqslant rk-1 \geqslant r-k.
\end{equation}
Let $V$ denote the open set $\P(\Hom(E,F)^\vee) \setminus T$.
One can define a map $\nu : C \times V \to \P(E \otimes F^\vee)$ 
which sends a closed point $(c,[\vp])$ of $C \times V$ to the point $(c,\vp\vert_c)$ in $\P(E \otimes F^\vee)$.
The map $\nu$ satisfies the following diagram
$$
\begin{tikzcd}
    C \times V \arrow{r}{\nu} \arrow{rd}{} & \P(E \otimes F^\vee) \arrow{d}{\eta}\\
    & C 
\end{tikzcd}
$$
where $\eta$ is the structure map.
It can be checked that, for any $c \in C$, the restriction 
$\nu_c : V \to \P(E\vert_c \otimes F^\vee\vert_c)$ is flat 
(for details see page 17 of \cite{gs22}).
Now using \cite[\href{https://stacks.math.columbia.edu/tag/039E}{Tag 039E}]{Stk}, it follows that $\nu$ is flat.
Consider the canonical map on $\P(E \otimes F^\vee)$,
$$\eta^*E \longrightarrow \eta^*F \otimes \O_{\P(E \otimes F^\vee)}(1)$$
and let $Z$ be the support of the cokernel of the map above.
Then the set $Z \cap \eta^{-1}(c)$ is the locus of the non-surjective maps in $\P(E\vert_c \otimes F\vert_c^\vee))$.
Using \cite[Chapter II, section 2, page 67]{ACGH1} we see that the codimension of 
$Z \cap \eta^{-1}(c) $ in $\P(E\vert_{c} \otimes F\vert_{c}^\vee))$ is $r-k+1$.
It follows that the codimension of $Z$ in $\P(E \otimes F^\vee)$ is $r-k+1$.
Since $\nu$ has constant fiber dimension, the codimension of $\nu^{-1}(Z)$ in $C \times V$ is $r-k+1$.
Let $p_V:C \times V \to V$ be the projection.
Hence the codimension of $p_V(\nu^{-1}(Z))$ in $V$ is $r-k$.
So the locus of the points in $\P(\Hom(E,F)^\vee)$ corresponding 
to non-surjective maps is the set $T \cup p_V(\nu^{-1}(Z))$, which has codimension at least $r-k \geqslant 1$.
This proves the lemma.
\end{proof}

\begin{lemma}\label{lemma irreducibility stable quotients}
There exists a number $d_2:= d_2(E,k)$ such that if $d \geqslant d_2$ then the locus of all stable locally free quotients in $Q_{k,d}(E)$ is irreducible.
\end{lemma}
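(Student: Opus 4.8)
The plan is to realise the locus $R\subseteq Q_{k,d}(E)$ of stable locally free quotients as the set-theoretic image of an irreducible scheme under a morphism; since the image of an irreducible topological space under a continuous map is irreducible, this proves the lemma. The source of the morphism will be a nonempty open subscheme of a projective bundle built over an atlas of the moduli space of stable vector bundles of rank $k$ and degree $d$ on $C$, and the one external ingredient is that this moduli space is irreducible.

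First I would fix $d\geqslant d_1$, with $d_1$ as in Lemma \ref{lemma gg and H1 vanishing of EdualF}, enlarging $d$ further if necessary so that the moduli space $\mc M$ of stable locally free sheaves of rank $k$ and degree $d$ on $C$ is constructed as a geometric GIT quotient of a smooth open subscheme $R^s$ of a Quot scheme on $C$ by $\mathrm{PGL}_N$. Then $R^s$ carries a universal family $\mc F$ of stable vector bundles on $C\times R^s$, the quotient map $R^s\to\mc M$ is a $\mathrm{PGL}_N$-torsor, and $\mc M$ is irreducible (the moduli space of semistable torsion-free sheaves on the integral nodal curve $C$ is irreducible, and the locally free locus is open in it); since $\mathrm{PGL}_N$ is connected, $R^s$ is irreducible as well. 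Let $\mc P$ be the pushforward to $R^s$ of $\HOM(p_C^*E,\mc F)$, where $p_C\colon C\times R^s\to C$ is the projection. By Lemma \ref{lemma gg and H1 vanishing of EdualF} and cohomology and base change, $\mc P$ is locally free and its formation commutes with base change, so the projective bundle $p\colon\P(\mc P^\vee)\to R^s$, whose fibre over $t$ parametrises nonzero homomorphisms $E\to\mc F_t$ up to scalar, is irreducible. On $C\times\P(\mc P^\vee)$ there is a tautological homomorphism from the pullback of $E$ to the pullback of $\mc F$ twisted by $\O_{\P(\mc P^\vee)/R^s}(1)$, restricting over a closed point $(t,[\varphi])$ to $\varphi$ up to scalar. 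Let $U\subseteq\P(\mc P^\vee)$ be the open locus over which this homomorphism is surjective on the fibres over $C$. By Lemma \ref{lemma generic surjection}, whose hypotheses hold for every $\mc F_t$ since $d\geqslant d_1$, the locus $U$ meets the fibre of $p$ over every point; in particular $U$ is a nonempty open subscheme of the irreducible scheme $\P(\mc P^\vee)$, hence $U$ is irreducible.

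Over $C\times U$ the tautological homomorphism is surjective — this is checked on the fibres $C\times\{u\}$ and then follows by Nakayama's lemma — and its kernel is flat over $U$; because tensoring by the pullback of a line bundle from $U$ does not change the isomorphism class of the quotient on a fibre $C\times\{u\}$, we obtain a flat family of rank-$k$, degree-$d$ quotients of $E$ over $U$, hence a morphism $\Psi\colon U\to Q_{k,d}(E)$. Every point of $\Psi(U)$ is the class of a surjection $E\to\mc F_u$ with $\mc F_u$ stable and locally free, so $\Psi(U)\subseteq R$; conversely, given $[q\colon E\to F]\in R$, I would pick $t\in R^s$ with $\mc F_t\cong F$ (possible because $R^s\to\mc M$ is surjective) and note that $q$, read as a nonzero surjection $E\to\mc F_t$, is a point of $U$ whose $\Psi$-image is $[q]$, the kernel being unchanged. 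Hence $R=\Psi(U)$ is irreducible, and it is nonempty by Lemma \ref{lemma generic surjection}; one may take $d_2:=d_1$, or the maximum of $d_1$ and whatever degree bound the GIT construction of $\mc M$ requires. (Combined with Lemma \ref{lemma H1 vanishing of S_FdualF} and the local freeness of $\ker q$ for $[q\colon E\to F]\in R$, one moreover sees that $R$ is smooth of pure dimension $dr-ke+k(r-k)(1-g)$, hence integral.) The step I expect to be the main obstacle is exactly the construction above together with the cited irreducibility of $\mc M$: since there is in general no Poincaré bundle on $C\times\mc M$, one must work over the atlas $R^s$; once that is set up, the remaining verifications are routine Quot-scheme and cohomology-and-base-change bookkeeping.
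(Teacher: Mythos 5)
Your proposal is correct and its core mechanism is the same as the paper's: parametrize all stable locally free quotients by a nonempty open subset $U$ (where the tautological map is surjective) of a projective bundle of homomorphisms over an irreducible base carrying a family that hits every stable bundle, then map $U$ to $Q_{k,d}(E)$ and observe the image is exactly the locus in question. The genuine difference is in how the irreducible base is produced. You take the GIT atlas $R^s$ of the moduli space $\mc M = M^s(k,d)$ and deduce its irreducibility from the irreducibility of $\mc M$ together with the $\mathrm{PGL}_N$-action; this buys you a shorter argument but imports irreducibility of $M^s(k,d)$ on the integral nodal curve as an external input, and that input needs a careful reference here (it is less standard than in the smooth case; your parenthetical appeal to irreducibility of the moduli of semistable torsion-free sheaves is exactly the sort of statement one should pin down for nodal $C$). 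The paper instead sidesteps the moduli space's irreducibility entirely: following Le Potier's proof of that very fact, it builds the irreducible family directly as the space $R$ of extensions $0 \to \O_C^{k-1} \to F \to L \to 0$ with $L \in \mathrm{Pic}^d(C)$, using Newstead's global generation bound to guarantee every stable bundle of large degree occurs, and only uses the moduli space as the target of a surjection. So the paper's route is more self-contained on a singular curve, while yours is cleaner if one is willing to cite the moduli-theoretic irreducibility; both are valid, and the remaining steps (Grauert/cohomology and base change, openness of the surjectivity locus, the induced map to the Quot scheme) coincide.
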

\begin{proof}
    Using \cite[Lemma 5.2']{New78} we get the number $d'_1=k(2g-1+2k)$ such that any stable locally free sheaf of rank $k$ and degree $d \geqslant d'_1$ is globally generated.
    Define $$d_2(E,k) := \max\{d_1,d'_1,1\}\,.$$
    Assume $d \geqslant d_2$.
    Following the proof of irreduciblity of moduli space of stable locally free sheaves 
    (see \cite[Theorem 8.5.2]{LP97}), 
    one can construct a space $R$ which parametrizes pairs $(L,e_L)$ where $L$ is a line bundle of degree $d$ on $C$ and $e_L$ is an extension of the form
    \begin{equation}\label{F contains trivial}
        e_L : \quad  0 \to \O_C^{k-1} \to F \to L \to 0\,.
    \end{equation}
    There is a universal sheaf $\F$ on $C \times R$.
    Let $R^s \subset R$ denote the open subset containing all the points such that $F$ is a stable locally free sheaf. 
    Let $M^s_C(k,d)$ denote the moduli space of stable locally free sheaves on $C$ of rank $k$ and degree $d$.
    The restriction of the universal sheaf $\F$ on $C \times R^s$ induces a morphism $\Phi : R^s \to M^s_C(k,d)$ which sends a point $(L,e_L:0 \to \O_C^{k-1} \to F \to L \to 0)$ to the sheaf $F$.
By the choice of $d$, the space $R$ is irreducible and the map $\Phi$ is surjective.    
In other words, the set $\{\F_r : r\in R^s \}$ is the set of all the stable locally free sheaves on $C$ of rank $k$ and degree $d$.

    Let $p_C : C \times R^s \to C$ and $p_R : C \times R^s \to R^s$ denote the projections.
    The sheaf ${p_R}_*(p_C^*E^\vee \otimes \F)$ is locally free on $R^s$ using Lemma \ref{lemma gg and H1 vanishing of EdualF} and Guauert's theorem.  
    The projectivization $\P:= \P(({p_R}_*(p_C^*E^\vee \otimes \F))^\vee)$ parametrizes pairs $(r,[q:E \to \F_r])$ where $r \in R^s$ and $q$ is a non-zero map.
    The subset $U \subset \P$ containing all the points such that $q$ is surjective, is open in $\P$.
One can check that there is a map $g :U \to Q_{k,d}(E)$ which sends the point $(r, [E \to \F_r])$  to the point $[E \to \F_r]$.
The image of $g$ is precisely the set of all stable locally free quotients of $E$ of rank $k$ and degree $d$.
As $R^s$ is irreducible, so is $\P$ and $U$.
Hence, we conclude that the set parametrizing the stable locally free quotients in $Q_{k,d}(E)$ is also irreducible.
\end{proof}

\begin{theorem}\label{theorem stable component}
If $d \geqslant d_2(E,k)$ then there is a unique component of $Q_{k,d}(E)$ whose general point corresponds to a quotient $[E \to F]$ 
such that $F$ is stable locally free. 
This component is generically smooth of dimension $dr-ke+k(r-k)(1-g)$.
\end{theorem}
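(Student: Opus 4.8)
The plan is to take for $\mc P$ the closure in $Q_{k,d}(E)$ of the locus $U$ of quotients $[q : E \to F]$ with $F$ stable and locally free, and to show that $Q_{k,d}(E)$ is smooth along $U$; then $\overline{U}$ is automatically an irreducible component, its general point is a stable locally free quotient, and the dimension and generic smoothness fall out of the tangent/obstruction computation. First I would check that $U$ is a nonempty irreducible open subset of $Q_{k,d}(E)$. It is open because $Q_{k,d}^{\tf}(E)$ is open by \cite[Theorem 2.3.1]{HL}, and on the flat family of torsion-free quotients over $Q_{k,d}^{\tf}(E)$ both local-freeness and stability are open conditions. It is irreducible by Lemma \ref{lemma irreducibility stable quotients}. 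It is nonempty for $d \geqslant d_2$: stable locally free sheaves of rank $k$ and degree $d$ exist (as used in the proof of Lemma \ref{lemma irreducibility stable quotients}), and for such a sheaf $F$ a general homomorphism $E \to F$ is surjective by Lemma \ref{lemma generic surjection}, giving a point of $U$. A nonempty irreducible open subset of a scheme has closure equal to an irreducible component, so $\mc P := \overline{U}$ is a component of $Q_{k,d}(E)$, and by construction its general point corresponds to a stable locally free quotient.

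Next I would establish smoothness of $Q_{k,d}(E)$ at each point of $U$. For $[q : E \to F] \in U$ write $S_F := \ker q$, a torsion-free sheaf. By standard deformation theory of Quot schemes (see \cite[Section 2.2]{HL}) the Zariski tangent space at $[q]$ is $\Hom(S_F, F)$ and the obstructions lie in $\Ext^1(S_F, F)$. Since $F$ is locally free we have $\HOM(S_F, F) \cong S_F^\vee \otimes F$, and since $S_F$ is torsion-free on the nodal curve we have $\EXT^1(S_F, \O_C) = 0$ -- at the node this is the vanishing $\Ext^1_{\O_{C,x}}(\m_x, \O_{C,x}) = 0$ of \cite[Lemma 2.1]{bh06} -- so $\Ext^1(S_F, F) = H^1(C, S_F^\vee \otimes F)$, which is $0$ by Lemma \ref{lemma H1 vanishing of S_FdualF}. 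Hence $Q_{k,d}(E)$ is smooth at $[q]$ of dimension $\hom(S_F, F) = h^0(S_F^\vee \otimes F) = dr - ke + k(r-k)(1-g)$, again by Lemma \ref{lemma H1 vanishing of S_FdualF}. In particular $\mc P$ contains $U$, a dense open subset of smooth points of $Q_{k,d}(E)$, so $\mc P$ is generically smooth of dimension $dr - ke + k(r-k)(1-g)$.

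Finally, for uniqueness, suppose $\mc P'$ is any component of $Q_{k,d}(E)$ whose general point corresponds to a stable locally free quotient. Then $\mc P' \cap U$ contains a dense open subset of $\mc P'$, hence is nonempty; choose $p \in \mc P' \cap U$. Since $Q_{k,d}(E)$ is smooth at $p$, the point $p$ lies on a unique irreducible component, so that component is $\mc P'$; but $p \in U \subseteq \mc P$, whence $\mc P' = \mc P$. The one point that really needs care is the identification of the obstruction space $\Ext^1(S_F, F)$ with $H^1(C, S_F^\vee \otimes F)$, which relies on $F$ being locally free and on the torsion-freeness of $S_F$ together with the $\Ext^1$-vanishing at the node; everything else is assembled from Lemmas \ref{lemma irreducibility stable quotients}, \ref{lemma generic surjection} and \ref{lemma H1 vanishing of S_FdualF}.
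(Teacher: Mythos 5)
Your proposal is correct and follows essentially the same route as the paper: existence and generic smoothness come from producing a stable locally free quotient via Lemmas \ref{lemma gg and H1 vanishing of EdualF}, \ref{lemma generic surjection} and \ref{lemma H1 vanishing of S_FdualF} and checking that the Quot scheme is smooth of the expected dimension there, while uniqueness rests on the irreducibility of the stable locally free locus from Lemma \ref{lemma irreducibility stable quotients}. Your explicit justification that $\Ext^1(S_F,F)=H^1(S_F^\vee\otimes F)$ (which is immediate here since $S_F$ is even locally free, being the kernel of a surjection onto the locally free sheaf $F$) is a point the paper leaves implicit, but it is not a different argument.
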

\begin{proof}
    Fix a stable bundle $F$ of rank $k$ and degree $d \geqslant d_2$.
    Using Lemma \ref{lemma gg and H1 vanishing of EdualF}, we conclude that $\HOM(E,F)$ is generated by global sections and $H^1(E^\vee \otimes F) = 0$.
    Lemma \ref{lemma generic surjection} says that a general morphism $E \to F$ is surjective.
    Such a quotient $\vp : E \to F$ corresponds to a point in the Quot scheme $Q_{k,d}(E)$. 
    Let $\mc W$ be an irreducible component of $Q_{k,d}(E)$ which contains the quotient $[\vp:E \to F]$. 
    Let $S_F$ denote the kernel of the quotient.
    By Lemma \ref{lemma H1 vanishing of S_FdualF} we have $\h^1(S_F^\vee \otimes F)=0$ and
    $h^0(S_F^\vee \otimes F) = dr-ke+k(r-k)(1-g)$.
    This implies that the point $[\vp: E \to F]$ is a smooth point of $\mc W$
    and $\dim_{[\vp]}\mc W = dr-ke+k(r-k)(1-g)$.
    As the subset of stable locally free quotients is open in $Q_{k,d}(E)$, 
    a general point of $\mc W$ corresponds to stable locally free quotient. 
    This proves the existence of components whose general points corresponds to stable locally free quotients and that any such component is generically smooth and of dimension $dr-ke+k(r-k)(1-g)$.
    The uniqueness of such component follows from 
    the irreducibility of the locus of all stable locally free quotients in $Q_{k,d}(E)$, which is proved in Lemma
    \ref{lemma irreducibility stable quotients}.
    \end{proof}

\begin{remark}\label{remark component P}
    Let $\mc P$ denote the closure of the locus of stable locally free quotients in $Q_{k,d}(E)$. 
    If $d \geqslant d_2$, by Theorem \ref{theorem stable component},
    it follows that $\mc P$ is an irreducible component of $Q_{k,d}(E)$.
    Moreover using Lemma \ref{lemma irreducibility stable quotients}, we see that, if $\mc W \neq \mc P$ is another component of $Q_{k,d}(E)$, then $\mc W$ cannot contain any stable locally free quotient.
\end{remark}

\section{Irreducible Component containing Torsion-free quotients}\label{section component of torsion-free quotients}

Let us first recall Serre duality for nodal curves.
As $C$ is a projective scheme over $\C$, by \cite[Proposition 7.5, Chapter III]{Ha}, $C$ has a dualizing sheaf $\omega^{\circ}_C$.
As the nodal curve $C$ is a local complete intersection, 
by \cite[Theorem 7.11, Chapter III]{Ha}, $\omega^{\circ}_C$ is locally free.
Moreover, by \cite[Theorem 7.6, Chapter III]{Ha}, we have an isomorphism
\begin{equation}\label{equation Serre duality}
 \Ext^i(G,\omega^{\circ}_C) \xrightarrow{\sim} H^{1-i}(C,G) 
\end{equation}
for any coherent sheaf $G$ on $C$ and $i\geqslant 0$.

\begin{lemma}\label{lemma H1 vanishing}
    There exists a number $d_3(E,k)$ such that if $d \geqslant d_3(E,k)$ then we have the following.
    Let $\mc W$ be an irreducible component of $Q_{k,d}(E)$ whose general point corresponds to a torsion-free quotient.
    Then a general point $[E \to F]$ of $\mc W$ satisfies $H^1(E^\vee \otimes F)=0$.
\end{lemma}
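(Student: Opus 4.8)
The plan is to reduce the vanishing of $H^1(E^\vee\otimes F)$ to a slope inequality via Serre duality, and then to read off that inequality for a general point of $\mc W$ from the dimension estimates already in hand. First I would note that, since $E$ is locally free, $E^\vee\otimes F=\HOM(E,F)$, and that $\omega^{\circ}_C$ is locally free because $C$ is a local complete intersection, so $E\otimes\omega^{\circ}_C$ is locally free. By Serre duality \eqref{equation Serre duality} with $i=0$, the space $H^1(C,E^\vee\otimes F)$ is dual to
\[
\Hom_C(E^\vee\otimes F,\ \omega^{\circ}_C)\;=\;\Hom_C(F,\ E\otimes\omega^{\circ}_C),
\]
using $(E^\vee)^\vee=E$. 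So it is enough to show that, for $d$ large, a general torsion-free quotient $F$ in $\mc W$ admits no nonzero homomorphism to $E\otimes\omega^{\circ}_C$.

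Next I would run the standard slope argument. If $F\to E\otimes\omega^{\circ}_C$ is a nonzero map, its image $I\neq 0$ is a quotient of $F$ and a subsheaf of the torsion-free sheaf $E\otimes\omega^{\circ}_C$, hence $\mu_{\min}(F)\le\mu(I)\le\mu_{\max}(E\otimes\omega^{\circ}_C)$. Therefore, setting
\[
\mu_0\;:=\;\mu_{\max}(E\otimes\omega^{\circ}_C)+1,
\]
which is a quantity depending only on $E$ and $C$, any torsion-free sheaf $F$ with $\mu_{\min}(F)\ge\mu_0$ satisfies $\Hom_C(F,E\otimes\omega^{\circ}_C)=0$, and hence $H^1(E^\vee\otimes F)=0$. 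So the lemma reduces to showing that a general point of $\mc W$ does not lie in the locus $Y_{\mu_0}\subseteq Q^{\tf}_{k,d}(E)$ of Lemma \ref{lemma dimension Ymu}.

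To finish I would compare dimensions. Lemma \ref{lemma dimension Ymu} applied with this $\mu_0$ gives constants $C_3=C_3(E,k,\mu_0)$ and $\alpha_3=\alpha_3(E,k,\mu_0)$ with $\dim Y_{\mu_0}\le d(r-1)+C_3$ for $d\ge\alpha_3$. On the other hand, Proposition \ref{proposition dimension bound} gives
\[
\dim\mc W\;\ge\;dr-ke+k(r-k)(1-g)-(r-k)(2r+k).
\]
Since $r\ge 2$, the right-hand side grows in $d$ with leading term $dr$ while $d(r-1)+C_3$ has leading term $d(r-1)$, so there is a number $d_3=d_3(E,k)\ge\alpha_3$ such that for all $d\ge d_3$ we have $d(r-1)+C_3<dr-ke+k(r-k)(1-g)-(r-k)(2r+k)\le\dim\mc W$, whence $\dim Y_{\mu_0}<\dim\mc W$. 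The general point of $\mc W$ is torsion-free by hypothesis, so it lies in the open set $Q^{\tf}_{k,d}(E)$; and $Y_{\mu_0}\cap\mc W$ is contained in a closed subset of the irreducible scheme $\mc W$ of dimension at most $\dim Y_{\mu_0}<\dim\mc W$, hence is not dense in $\mc W$. Thus a general $[E\to F]\in\mc W$ has $\mu_{\min}(F)\ge\mu_0$, and by the slope argument $H^1(E^\vee\otimes F)=0$.

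I do not expect a real obstacle here: the two dimension estimates (Proposition \ref{proposition dimension bound} and Lemma \ref{lemma dimension Ymu}) do the substantive work, and they are already established. The only points that need care are that $\mu_0$ must be chosen independently of $d$ and of the component $\mc W$ — which is why it is taken to be the fixed invariant $\mu_{\max}(E\otimes\omega^{\circ}_C)+1$ — and the routine remark that $E\otimes\omega^{\circ}_C$ is locally free, so that its slopes and the Serre-duality rewriting are legitimate.
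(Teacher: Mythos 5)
Your proposal is correct and follows essentially the same route as the paper: the same choice $\mu_0=\mu_{\max}(E\otimes\omega^{\circ}_C)+1$, the same Serre-duality reduction of $H^1(E^\vee\otimes F)=0$ to $\Hom(F,E\otimes\omega^{\circ}_C)=0$ via a slope bound, and the same comparison of $\dim Y_{\mu_0}$ (Lemma \ref{lemma dimension Ymu}) against the lower bound for $\dim\mc W$ (Proposition \ref{proposition dimension bound}). No gaps.
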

\begin{proof}
    First let us define $d_3$.
    Let $\mu_0:= \mu_{\max}(E \otimes \omega^{\circ}_C)+1$.
    For any $d$, consider the set $Y_{\mu_0}$ as defined in Lemma \ref{lemma dimension Ymu},
    $$Y_{\mu_0} = \{ [E \to F] \in Q^{\tf}_{k,d}(E): \mu_{\min}(F) < \mu_0 \}\,.$$
    Using Lemma \ref{lemma dimension Ymu}, we get constants 
    $C_3(E,k,\mu_0)$ and $\alpha_3(E,k,\mu_0)$ such that
    if $d \geqslant \alpha_3$ then
    $$ \dim Y_{\mu_0} \leqslant d(r-1) + C_3\,.$$
    We define 
    $$ d_3 := \max\{\alpha_3, C_3 -(-ke+k(r-k)(1-g)-(r-k)(2r+k))+1\}\, .$$
    Thus, if $d \geqslant d_3$ then we have 
    \begin{equation}\label{equation if d > alpha?}
    \dim Y_{\mu_0} < dr-ke+k(r-k)(1-g)-(r-k)(2r+k)\,.    
    \end{equation}
    Note that $\mu_0$ depends only on $E$ and hence the the number $d_3$ depends only on $E$ and $k$.
    
    Now we assume $d \geqslant d_3$.
    Let $\mc W$ be an irreducible component of $Q_{k,d}(E)$ whose general point corresponds to a torsion-free quotient.
    Using Proposition \ref{proposition dimension bound},
    we have $\dim \mc W \geqslant [dr-ke+k(r-k)(1-g)-(r-k)(2r+k)]$.
    As $d \geqslant d_3$, so \eqref{equation if d > alpha?} is satisfied and hence $\dim Y_{\mu_0} < \dim \mc W$.
    It follows that a general torsion-free quotient in $\mc W$ is not in $Y_{\mu_0}$, which translates to the following.
    Let $[E \to F]$ be a general point of $\mc W$. By assumption, $F$ is torsion-free. 
    As $\mu_{\min}(F) > \mu_{\max}(E \otimes \omega^\circ_C)$,
    by \cite[Lemma 1.3.3]{HL}, it follows that $\Hom(F,E \otimes \omega^{\circ}_C) = 0$.
    Using Serre duality \eqref{equation Serre duality}, we get $H^1(E^\vee \otimes F)=0$.
    This proves the lemma.
\end{proof}

\begin{theorem}\label{theorem locally free component}
    There is a number $d_4(E,k)$ such that if $d \geqslant d_4$, then there exists a unique component of $Q_{k,d}(E)$ whose general point corresponds to a locally free quotient.
\end{theorem}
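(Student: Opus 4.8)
The plan is to show that, for $d$ large, the component $\mc P$ of Remark~\ref{remark component P} is the unique component of $Q_{k,d}(E)$ whose general point is a locally free quotient. Existence is immediate: by Theorem~\ref{theorem stable component}, for $d\geqslant d_2$ the general point of $\mc P$ is a stable locally free quotient, in particular a locally free one. For uniqueness, let $\mc W$ be any component whose general point $[q\colon E\to F]$ has $F$ locally free, and put $S_F:=\ker q$; the goal is $\mc W=\mc P$. If $F$ is already stable this is Remark~\ref{remark component P}, so assume henceforth that $F$ is not stable. Set $d_4:=\max\{d_2(E,k),\,d_3(E,k),\,d^{*}\}$, where $d^{*}$ is a bound, specified in the last step, guaranteeing that the positivity hypotheses of \cite[Lemma~2.4]{bh06} hold at a general point of $\mc W$; from now on $d\geqslant d_4$.

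Since a locally free quotient is torsion-free, Lemma~\ref{lemma H1 vanishing} applies to $\mc W$, so a general point $[q\colon E\to F]$ satisfies $H^1(E^\vee\otimes F)=0$. Because $F$ is locally free the sequence $0\to S_F\to E\to F\to 0$ is locally split, hence $S_F$ is locally free, $\Ext^1(S_F,F)=H^1(S_F^\vee\otimes F)$ and $\Ext^1(E,F)=H^1(E^\vee\otimes F)=0$. Dualizing the sequence, tensoring by $F$ and using that $C$ is a curve, the cohomology sequence of $0\to F^\vee\otimes F\to E^\vee\otimes F\to S_F^\vee\otimes F\to 0$ gives $H^1(S_F^\vee\otimes F)=0$, exactly as in Lemma~\ref{lemma H1 vanishing of S_FdualF}; thus $\Ext^1(S_F,F)=0$. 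Consequently $Q_{k,d}(E)$ is smooth at $[q]$, and in particular $\mc W$ is the only component of $Q_{k,d}(E)$ through $[q]$.

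Finally I produce a stable locally free quotient inside $\mc W$. Arguing as in the proof of Lemma~\ref{lemma H1 vanishing}, but applying Lemma~\ref{lemma dimension Ymu} with an arbitrarily large slope threshold $\mu_0$, one finds that a general $[q\colon E\to F]$ of $\mc W$ has $\mu_{\min}(F)$ as large as we please once $d$ is large; we fix $d^{*}$ so large that, for $d\geqslant d^{*}$, the resulting $F$ satisfies the hypotheses of \cite[Lemma~2.4]{bh06}. That lemma yields a flat family $\mc F$ of locally free sheaves of rank $k$ and degree $d$ over a smooth pointed curve $(T,0)$ with $\mc F_0\cong F$ and $\mc F_t$ stable for $t\neq 0$. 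Writing $p_C,p_T$ for the projections from $C\times T$, the sheaf $p_{T*}\HOM(p_C^{*}E,\mc F)$ is locally free near $0$: we have $H^1(E^\vee\otimes\mc F_0)=0$ by the previous step and $H^1(E^\vee\otimes\mc F_t)=0$ for $0\neq t$ near $0$ by Lemma~\ref{lemma gg and H1 vanishing of EdualF} (recall $d\geqslant d_1$), so $h^1$ vanishes identically near $0$ and cohomology and base change apply, with fibre $\Hom(E,\mc F_t)$. Extend $q\in\Hom(E,F)$ to a section $\widetilde q$ of this locally free sheaf over a neighbourhood of $0$; since surjectivity is open and $\widetilde q_0=q$ is surjective, $\widetilde q_t\colon E\to\mc F_t$ is surjective for $t$ near $0$. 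Hence $t\mapsto[\widetilde q_t\colon E\to\mc F_t]$ is a morphism from a neighbourhood of $0$ to $Q_{k,d}(E)$ sending $0$ to $[q]$ and each $t\neq 0$ to a stable locally free quotient; as $F$ is not stable, $\mc F_t\not\cong F$ for $t\neq 0$, so the closure of the image is an irreducible curve through $[q]$. Since $[q]$ is a smooth point of $Q_{k,d}(E)$, this curve lies in $\mc W$, so $\mc W$ contains a stable locally free quotient, and therefore $\mc W=\mc P$ by Remark~\ref{remark component P}.

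The main obstacle is the interplay in the last step between deforming the sheaf $F$ to a stable bundle and simultaneously deforming the surjection $E\twoheadrightarrow F$: this is precisely where the vanishing $\Ext^1(E,F)=H^1(E^\vee\otimes F)=0$ is indispensable, and that vanishing rests on the codimension estimate for $Y_{\mu_0}$ (Lemma~\ref{lemma dimension Ymu}) via Lemma~\ref{lemma H1 vanishing}. A secondary point is arranging that a general locally free quotient of $\mc W$ satisfies the hypotheses of \cite[Lemma~2.4]{bh06}, which is why the bound $d^{*}$ is built into $d_4$.
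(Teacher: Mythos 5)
Your proof is correct and follows the same overall strategy as the paper: take a general locally free quotient $[q\colon E\to F]$ of a putative component $\mc W$, obtain $H^1(E^\vee\otimes F)=0$ from Lemma \ref{lemma H1 vanishing}, deform $F$ to a stable bundle via \cite[Lemma 2.4]{bh06}, and spread the surjection $q$ over the family using the $H^1$-vanishing and base change. The one genuine difference is the concluding step: the paper keeps the entire irreducible parameter space $U\subseteq\P\bigl(({p_T}_*(p_C^*E^\vee\otimes\F))^\vee\bigr)$ and argues that $g(U)\subseteq\overline{\mc P}=\mc P$ because a general point of $U$ maps into $\mc P$, so in particular $g(u_0)=[q]\in\mc P$; you instead deduce $\Ext^1(\ker q,F)=H^1((\ker q)^\vee\otimes F)=0$, hence smoothness of $Q_{k,d}(E)$ at $[q]$, and use the uniqueness of the component through a smooth point to trap the deformed curve inside $\mc W$, then invoke Remark \ref{remark component P}. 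Both are valid; yours is more local but needs the extra $\Ext^1$ computation, while the paper's avoids any smoothness claim by exploiting irreducibility of the projectivized Hom-bundle over $T$. Two cosmetic points: \cite[Lemma 2.4]{bh06} is applied in the paper to an arbitrary vector bundle on an integral nodal curve of genus $\geqslant 2$ with no slope or degree hypothesis, so your threshold $d^{*}$ and the appeal to Lemma \ref{lemma dimension Ymu} with large $\mu_0$ are superfluous (though harmless); and the vanishing $H^1(E^\vee\otimes\F_t)=0$ for all $t$ near $t_0$ follows directly from upper semicontinuity once it holds at $t_0$, without needing stability of every nearby fibre.
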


\begin{proof}
Define 
$$d_4:= \max\{d_2,d_3\}\,.$$
Assume $d \geqslant d_4$.
As $d_4 \geqslant d_2$, by Remark \ref{remark component P}, 
$Q_{k,d}(E)$ has a component $\mc P$ which contains all stable locally free quotients.
This proves the existence of a component whose general point corresponds to a locally free quotient.
We need to show the uniqueness of such a component.

Let $\mc W$ be any component of $Q_{k,d}(E)$ whose general point corresponds to a locally free quotient of $E$.
We will show that $\mc W$  and $\mc P$ are the same components.
Let $[\varphi : E \to F] \in \mc W$ be a general locally free quotient in $\mc W$.
By Lemma \ref{lemma H1 vanishing}, we have $H^1(E^\vee \otimes F) = 0$.

By \cite[Lemma 2.4]{bh06} there exists an irreducible variety $T$ and a $T$-flat sheaf $\F$ on $C \times T$ such that 
$\F_{t}$ is a stable vector bundle of rank $k$ and degree $d$ on $C$ for a general point $t \in T$
and $\F_{t_0} = F$ for some $t_0 \in t$.
Using $\F$ we want to construct a family of quotients of $E$.
Let $p_C : C \times T \to C$ and $p_T : C \times T \to T$ be the projections. 
Consider the $T$-flat sheaf $p_C^*E \otimes \F$ on $C \times T$.
We replace $T$ by the open subset containing points $t$ such that 
$H^1(E^\vee \otimes \F_t) = 0$.
Note that $T$ is non-empty as $t_0 \in T$.
Using Grauert's theorem, it follows that ${p_T}_*(p_C^*E^\vee \otimes \F)$ is a locally free sheaf on $T$.
So the projectivization $\P := \P(({p_T}_*(p_C^*E^\vee \otimes \F))^\vee)$
is irreducible.
Let $\eta : \P \to T$ denote the natural projection.
We have the tautological quotient of sheaves on $\P$,
\begin{equation}\label{expand H}
\eta^*({p_T}_*(p_C^*E^\vee \otimes \F))^\vee \to \O_{\P}(1) \to 0\,.
\end{equation}
Let $\wt \eta$ and $\rho_\P$ denote the maps as shown in the following Cartesian diagram 
$$ 
\begin{tikzcd}
    C \times \P \arrow{r}{\wt \eta} \arrow{d}{\rho_\P} & C \times T \arrow{d}{p_T}\\
    \P \arrow{r}{\eta} & T 
    \end{tikzcd}
$$
and let $\rho_C : C \times \P \longrightarrow C$ denote the projection.
From \eqref{expand H}, one gets a map of sheaves on $C \times \P$,
\begin{equation}
\theta : \rho_C^*E \to \rho_\P^*(\O_\P(1)) \otimes \widetilde\eta^*\F \,.
\end{equation}
Let $U$ be the subset of $\P$ defined as follows 
$$ U = \{ u \in \P : \theta\vert_{C \times \{u\}} : E \to \F_{\eta(u)} \textrm{ is surjective }\}\,.$$
The subset $U$ is open as it is the compliment of the closed set $\rho_\P(\supp(Coker(\theta)))$ in $\P$.
The quotient $\varphi : E \to F$, we started with, corresponds to a point $u_0 \in \P$ such that $\eta(u_0) = t_0$.
Then the restriction of the map $\theta$ on the fiber $C \times \{u_0\}$ is given by the quotient $\varphi : E \to F$ itself.
So $u_0$ is in $U$, which proves the non-emptiness of $U$.
Also note that $\P$ is irreducible and hence so is $U$.
As $\F$ is flat over $T$, so $\wt \eta^*\F$ is flat over $\P$ and consequently 
$\rho_\P^*(\O_\P(1)) \otimes \widetilde\eta^*\F$ is flat over $\P$.
So the quotient of sheaves $\theta$ restricted on $C \times U$ gives a map
$$ g : U \to Q_{k,d}(E)\,.$$
A closed point of $U$ corresponds to a pair $(t,\varphi_t)$ where $t \in T$ and $\varphi_t : E \to \F_t$ is surjective.
The map $g$ sends the point $(t,\varphi_t: E \to \F_t)$ to the point $[\varphi_t : E \to \F_t]$ in $Q_{k,d}(E)$.
Since a general point of $T$ is a stable vector bundle and the projection $\eta : \P \to T$ is surjective, so for a general point $(t,\varphi_t : E \to \F_t)$ in $\P$, $\F_t$ is a stable vector bundle.
As $\P$ is irreducible and $U$ is open in $\P$, we conclude that a general point of $U$ corresponds to a stable locally free quotient. 
Let the corresponding open set be denoted by $U'$.
Then $g(U') \subset \mc P$.
This implies that $g(U) \subset g(\overline{U'}) \subset \overline {\mc P}= \mc P$.
So image of $U$ is contained in $\mc P$ and hence $g(u_0)=[\vp: E \to F] \in \mc P$.
So a general element of $\mc W$ is in $\mc P$ which implies that $\mc W=\mc P$.
This proves the theorem.
\end{proof}

\begin{theorem}\label{theorem torsion-free component}
    If $d \geqslant d_4$, then there exists a unique component of $Q_{k,d}(E)$ whose general point corresponds to a torsion-free quotient of $E$.
\end{theorem}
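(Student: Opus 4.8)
The plan is to run the argument of Theorem \ref{theorem locally free component} one notch further: there a locally free quotient was deformed to a stable locally free quotient, and here a torsion-free quotient will be deformed to a locally free one, using \cite[Lemma 2.3]{bh06} in place of \cite[Lemma 2.4]{bh06}. Existence requires nothing new: as $d \geqslant d_4 \geqslant d_2$, Remark \ref{remark component P} already furnishes the component $\mc P$, whose general point is a stable locally free quotient and hence in particular torsion-free. So the real task is uniqueness: given a component $\mc W$ of $Q_{k,d}(E)$ whose general point corresponds to a torsion-free quotient, I want to show $\mc W = \mc P$.

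To do this I would fix a general point $[\varphi : E \to F] \in \mc W$, so that $F$ is torsion-free, and first invoke Lemma \ref{lemma H1 vanishing} (applicable since $d \geqslant d_4 \geqslant d_3$ and $\mc W$ has torsion-free general point) to get $H^1(E^\vee \otimes F) = 0$. Then \cite[Lemma 2.3]{bh06} provides an irreducible variety $T$ and a $T$-flat sheaf $\F$ on $C \times T$ with $\F_{t_0} = F$ for some $t_0 \in T$ and $\F_t$ locally free of rank $k$ and degree $d$ for general $t \in T$. Shrinking $T$ to the open locus where $H^1(E^\vee \otimes \F_t) = 0$ --- nonempty because $t_0$ lies in it --- Grauert's theorem makes ${p_T}_*(p_C^*E^\vee \otimes \F)$ locally free, so that $\P := \P(({p_T}_*(p_C^*E^\vee \otimes \F))^\vee)$ is irreducible. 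Exactly as in Theorem \ref{theorem locally free component}, the tautological quotient on $\P$ yields a map $\theta : \rho_C^*E \to \rho_\P^*(\O_\P(1)) \otimes \wt{\eta}^*\F$ on $C \times \P$, the surjectivity locus $U \subseteq \P$ of $\theta$ is open and contains the point $u_0$ lying over $t_0$ at which $\theta$ restricts to $\varphi$ (hence $U$ is nonempty and irreducible), and flatness of $\F$ over $T$ gives a morphism $g : U \to Q_{k,d}(E)$ with $g(u_0) = [\varphi]$.

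Finally, since $\eta : \P \to T$ is dominant and a general $\F_t$ is locally free, the general point of $U$ corresponds to a locally free quotient of $E$; call the corresponding dense open subset $U'$. By Theorem \ref{theorem locally free component} the unique component of $Q_{k,d}(E)$ with locally free general point is $\mc P$, and because the locus of locally free quotients is open in $Q_{k,d}(E)$, every locally free quotient lies in $\mc P$; hence $g(U') \subseteq \mc P$, and as $U'$ is dense in $U$ and $\mc P$ is closed, $g(U) \subseteq \overline{g(U')} \subseteq \mc P$. In particular $[\varphi] = g(u_0) \in \mc P$; since $[\varphi]$ is a general point of $\mc W$ and $\mc P$ is closed, this forces $\mc W \subseteq \mc P$, and both being irreducible components, $\mc W = \mc P$. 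I expect the only real point of friction to be the same one as in Theorem \ref{theorem locally free component} --- checking non-emptiness of $U$ and the flatness making $g$ a morphism --- which is routine once the $H^1$-vanishing of Lemma \ref{lemma H1 vanishing} is in hand and is seen to propagate from $t_0$ to a neighbourhood in $T$; the genuinely new input is just the torsion-free-to-locally-free deformation \cite[Lemma 2.3]{bh06}.
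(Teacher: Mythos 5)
Your proposal is correct and is exactly the argument the paper intends: the paper's own proof of this theorem simply says to repeat the proof of Theorem \ref{theorem locally free component} with \cite[Lemma 2.3]{bh06} in place of \cite[Lemma 2.4]{bh06} and to invoke Theorem \ref{theorem locally free component} at the end, which is precisely what you do. Your closing step (a locally free quotient lies in a component whose general point is locally free by openness of the locally free locus, hence in $\mc P$ by Theorem \ref{theorem locally free component}) is the right way to make the final containment $g(U)\subseteq \mc P$ precise.
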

\begin{proof}
As $d \geqslant d_4$, the existence of such a component is already proved by Theorem \ref{theorem locally free component}.
Uniqueness of such a component can be proved by using the same argument as in the proof of the Theorem \ref{theorem locally free component} and using \cite[Lemma 2.3]{bh06} (instead of \cite[Lemma 2.4]{bh06} which was used in the proof of Theorem \ref{theorem locally free component}) and Theorem \ref{theorem locally free component}.
\end{proof}

\begin{corollary}\label{corollary component P contains torsion-free}
    Let $d \geqslant d_4$. Let $\mc W$ be an irreducible component of $Q_{k,d}(E)$ which contains a torsion-free quotient. Then $\mc W = \mc P$. 
\end{corollary}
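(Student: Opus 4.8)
The plan is to deduce Corollary \ref{corollary component P contains torsion-free} directly from the uniqueness part of Theorem \ref{theorem torsion-free component}, so the ``proof'' is really just an unwinding of definitions. Suppose $d \geqslant d_4$ and let $\mc W$ be an irreducible component of $Q_{k,d}(E)$ that contains at least one point $[q : E \to F]$ with $F$ torsion-free. First I would invoke the fact (\cite[Theorem 2.3.1]{HL}, already cited in the paper) that $Q_{k,d}^{\tf}(E)$ is open in $Q_{k,d}(E)$. Hence $\mc W \cap Q_{k,d}^{\tf}(E)$ is a non-empty open subset of the irreducible space $\mc W$, and therefore it is dense in $\mc W$. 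In particular, a general point of $\mc W$ lies in $Q_{k,d}^{\tf}(E)$, i.e.\ a general point of $\mc W$ corresponds to a torsion-free quotient of $E$.

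Next I would appeal to Theorem \ref{theorem torsion-free component}: since $d \geqslant d_4$, there is a \emph{unique} irreducible component of $Q_{k,d}(E)$ whose general point corresponds to a torsion-free quotient. On the other hand, the component $\mc P$ — the closure of the locus of stable locally free quotients, which by Remark \ref{remark component P} is an honest irreducible component once $d \geqslant d_2$ (and $d_4 \geqslant d_2$) — certainly has the property that its general point is a locally free, hence torsion-free, quotient; indeed by Theorem \ref{theorem stable component} its general point is even stable locally free. Thus both $\mc W$ and $\mc P$ are components of $Q_{k,d}(E)$ whose general point is a torsion-free quotient, and by the uniqueness statement they must coincide: $\mc W = \mc P$.

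There is essentially no obstacle here; the only small point to be careful about is the logical step ``$\mc W$ contains a torsion-free point $\Rightarrow$ the general point of $\mc W$ is torsion-free,'' which uses that ``torsion-free'' is an open condition (not merely constructible), so that a single torsion-free point forces an entire dense open locus of them inside the irreducible $\mc W$. Everything else is a citation of results already established earlier in the excerpt, in particular Theorem \ref{theorem torsion-free component} and Remark \ref{remark component P}.
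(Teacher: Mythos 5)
Your argument is correct and is essentially identical to the paper's own proof: the paper also observes that the torsion-free locus is open (citing \cite[Proposition 2.3.1]{HL}), hence dense in the irreducible component $\mc W$, and then invokes the uniqueness statement of Theorem \ref{theorem torsion-free component} together with the fact that $\mc P$ is a component whose general point is torsion-free. No gaps.
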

\begin{proof}   
    Let $[E \to F] \in \mc W$ be a quotient such that $F$ is torsion-free.
    Let $V=\{ [E \to G] \in \mc W : G \textrm{ is torsion-free\,}\}$.
    By \cite[Proposition 2.3.1]{HL}, $V$ is an open subset of $\mc W$.
    As $V$ is non-empty, $\mc W$ is a component of $Q_{k,d}(E)$ whose general point corresponds to a torsion-free quotient.
    Note that $\mc P$ is also a component of $Q_{k,d}(E)$ whose general point corresponds to a torsion-free quotient.
    By Theorem \ref{theorem torsion-free component}, it follows that 
    $\mc W = \mc P$.
\end{proof}

\begin{lemma}\label{lemma surjection to any twist}
    Let $F$ be a torsion-free sheaf on $C$ of rank $k$, degree $d$ and type $a$.
    Then there exists $n_0$ such that
    for any $n \geqslant n_0$ there is a surjection
    $\O_C^{\oplus 2k+a} \to F(n) \to 0$.
\end{lemma}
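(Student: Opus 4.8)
The plan is to pass to a large twist so that $F(n)$ becomes globally generated, and then to extract the desired surjection from $\O_C^{\oplus(2k+a)}$ by choosing $2k+a$ general global sections, the number $2k+a$ being dictated by the jump in the fibre dimension of $F$ at the node. Concretely, fix the ample line bundle $\O_C(1)$ used in the twist; by Serre's theorem there is an $n_0$ such that $F(n)$ is globally generated for every $n\geq n_0$, so the evaluation $H^0(C,F(n))\otimes\O_C\to F(n)$ is surjective. Fix such an $n$ and set $G:=F(n)$, $V:=H^0(C,G)$. Since tensoring by a line bundle does not alter the local structure of $F$ at the node, $G$ is again torsion-free of type $a$, so by Definition \ref{torsion-free structure} we have $\dim_\C(G\otimes\kappa(c))=k$ for $c\neq x$ and $\dim_\C(G\otimes\kappa(x))=k+a$; in particular every fibre of $G$ has dimension at most $2k+a$. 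It therefore suffices to produce an $m$-tuple $\underline s=(s_1,\dots,s_m)\in V^{\oplus m}$, with $m:=2k+a$, whose sections generate $G$ at every closed point of $C$; by Nakayama's lemma this is exactly the statement that the resulting map $\O_C^{\oplus m}\to G$ is surjective.

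The heart of the argument is a dimension count for the ``bad'' tuples. For a closed point $c\in C$, call $\underline s$ bad at $c$ if $s_1(c),\dots,s_m(c)$ fail to span $G\otimes\kappa(c)$. Because $G$ is globally generated, $V\to G\otimes\kappa(c)$ is surjective, so the locus $B_c\subseteq V^{\oplus m}$ of tuples bad at $c$ is the preimage of the determinantal variety of $m$-tuples of rank $\leq\dim_\C(G\otimes\kappa(c))-1$ in $G\otimes\kappa(c)$; hence
\[
\operatorname{codim}(B_c,V^{\oplus m})=m-\dim_\C(G\otimes\kappa(c))+1,
\]
which equals $k+a+1$ for $c\neq x$ and $k+1$ for $c=x$, in either case at least $1$. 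The bad tuples form $B_x\cup\bigcup_{c\neq x}B_c$. Now $B_x$ is a proper closed subset since $\operatorname{codim}B_x\geq1$, while the incidence variety $\{(c,\underline s): c\in C\setminus\{x\},\ \underline s\in B_c\}$ has dimension at most $1+\dim V^{\oplus m}-(k+a+1)<\dim V^{\oplus m}$, so (by Chevalley) its image $\bigcup_{c\neq x}B_c$ is not dense in $V^{\oplus m}$. Thus the set of bad tuples lies in a proper closed subset of $V^{\oplus m}$, a general $\underline s$ is bad at no point of $C$, and the associated map $\O_C^{\oplus(2k+a)}\to F(n)$ is then surjective, which gives the assertion with $n_0$ as chosen.

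The only real subtlety --- and the reason the bound is $2k+a$ rather than the $k+1$ one obtains over a smooth curve --- is the behaviour at the node, where $G\otimes\kappa(x)$ has dimension $k+a>k$ and extra sections are needed to force generation at $x$. This is absorbed above because $m=2k+a$ still exceeds $k+a$ with room to spare, keeping $\operatorname{codim}B_x\geq1$; one could sharpen the bound to $m=k+\max(a,1)$, but $2k+a$ is all that the later application requires. Everything else is routine: the uniform surjectivity $V\twoheadrightarrow G\otimes\kappa(c)$ is just global generation, the codimension of the determinantal loci is standard, and passing from ``general tuple'' to ``existence'' uses only that the union of a proper closed set and the non-dense image of the incidence variety is not all of $V^{\oplus m}$.
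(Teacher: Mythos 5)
Your proof is correct, and it reaches the key step by a genuinely different mechanism than the paper's. Both arguments start the same way: twist so that $G=F(n)$ is globally generated, note that $\dim_\C(G\otimes\kappa(c))$ equals $k$ away from the node and $k+a$ at the node, and then cut the full space of sections down to $2k+a$ generators. The paper does this by an explicit two-step construction: it first chooses a $(k+a)$-dimensional subspace $V_1$ generating the fibre at $x$, hence generating $G$ on an open neighbourhood $U$ of $x$; the complement of $U$ is a finite set of smooth points, and for each such point the $k$-dimensional subspaces of $H^0(G)$ generating the fibre there form a nonempty open subset of a Grassmannian, so one $k$-dimensional $V_2$ handles all of them at once; then $V_1+V_2$ has dimension at most $2k+a$. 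You instead run the classical incidence-variety count on $m$-tuples of sections: the bad locus at each point is the preimage of a determinantal variety of codimension $m-\dim_\C(G\otimes\kappa(c))+1$, the bad tuples over $C\setminus\{x\}$ sweep out a constructible set of dimension strictly less than $\dim V^{\oplus m}$, and $B_x$ is a proper closed subset, so a general $(2k+a)$-tuple generates every fibre and Nakayama finishes the argument. Your route is more uniform and, as you observe, yields the sharper bound $k+\max(a,1)$; the paper's route avoids the determinantal codimension computation at the cost of the cruder bound $2k+a$, which is all that the later application needs. Both proofs are complete.
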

\begin{proof}
We take $n_0$ to be the integer such that if $n\geqslant n_0$ then $F(n)$ is generated by global sections.
Let us fix $n \geqslant n_0$.
So the natural map
 $$ \vp : H^0(F(n)) \otimes \O_C \longrightarrow F(n)$$ is surjective.
The vector space $(F(n)) \otimes \kappa(x)$ is generated by $(k+a)$ elements.
So there is a subspace $V_1 \subset  H^0(F(n))$ of dimension $k+a$ such that the natural map
$\vp_{V_1} :V_1 \otimes \O_C \to F(n)$
is surjective at the node $x$.
It follows that the map $\vp_{V_1}$ is surjective on an open set $U \subset C$ containing the node $x$.
The complement of $U$ in $C$ consists of finitely many non-singular points, say $c_1,\dots,c_s$.
For each $c_i$,
the vector space $F(n)\vert_{c_i}$ has rank $k$.
Let $G_i$ denote the set of all subspaces $W \subset H^0(F(n))$ of rank $k$ such that the natural map $\vp_W : W \otimes O_C \to F(n)$ is surjective at $c_i$.
Clearly, $G_i$ is non-empty as $\vp$ is surjective.
Moreover, $G_i$ is an open subscheme of the Grassmannian scheme 
${\rm Grass}( H^0(F(n)),k)$ which parametrises rank $k$ subspaces of $H^0(F(n))$.
This implies that the intersection $\cap_{i=1}^s G_i $ is non-empty.
Hence we can find a subspace $V_2 \subset H^0(F(n))$ of rank $k$ such that the natural map $\vp_{V_2} : V_2 \otimes O_C \to F(n)$ is surjective at every $c_i$.
Let $V := V_1 + V_2$. 
Then the natual map 
$$ \vp_V : V \otimes O_C \longrightarrow F(n)$$
is surjective and rank of $V$ is at most $2k+a$.
From this the lemma easily follows.
\end{proof}

Combining Theorem \ref{theorem torsion-free component} and 
Lemma \ref{lemma surjection to any twist}, 
we get the Proposition \ref{proposition deformation of torsion free sheaves to stable locally free}.

\begin{proposition}\label{proposition deformation of torsion free sheaves to stable locally free}
    Let $F$ be a torsion-free sheaf on $C$ of rank $k$ and degree $d$.
    Then there exists a family $\F$ of torsion-free sheaves on $C$ of rank $k$ and degree $d$,
    parametrized by an integral scheme $T$
    such that for a general point $t$ in $T$, the sheaf $\F_t$ is stable locally free
    and $\F_{t_0} = F$ for some $t_0 \in T$.
\end{proposition}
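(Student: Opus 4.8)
The plan is to reduce to the range $d\geqslant d_4$ treated in Theorem \ref{theorem torsion-free component} by replacing $F$ with a sufficiently high twist, realising that twist as a quotient of a fixed trivial bundle, and then transporting back the universal family over the component $\mc P$. Concretely, let $a$ be the type of $F$. By Lemma \ref{lemma surjection to any twist} there is an $n_0$ such that for every $n\geqslant n_0$ there is a surjection $\O_C^{\oplus 2k+a}\to F(n)\to 0$; since $a\leqslant k$, precomposing with a coordinate projection $\O_C^{\oplus 3k}\to\O_C^{\oplus 2k+a}$ gives a surjection $\O_C^{\oplus 3k}\to F(n)\to 0$. Set $E':=\O_C^{\oplus 3k}$, which is independent of $F$, and let $d_4':=d_4(E',k)$ be the constant of Theorem \ref{theorem locally free component}. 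Since $\deg F(n)\to\infty$ as $n\to\infty$, I would fix one $n\geqslant n_0$ with $d':=\deg F(n)\geqslant d_4'$. Then $[q\colon E'\to F(n)]$ is a closed point of $Q_{k,d'}(E')$, and $F(n)$ is torsion-free (a twist of the torsion-free sheaf $F$), so $[q]\in Q^{\tf}_{k,d'}(E')$.

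Next I would locate the component of $Q_{k,d'}(E')$ containing $[q]$. Let $\mc P\subseteq Q_{k,d'}(E')$ be the component of Remark \ref{remark component P} (for the bundle $E'$ and degree $d'$). Since $d'\geqslant d_4'$ and $F(n)$ is torsion-free, Corollary \ref{corollary component P contains torsion-free} forces every irreducible component of $Q_{k,d'}(E')$ passing through $[q]$ to equal $\mc P$; in particular $[q]\in\mc P$. Give $\mc P$ its reduced structure, so that it is an integral scheme, and put $\mc P^{\tf}:=\mc P\cap Q^{\tf}_{k,d'}(E')$, an open subscheme of $\mc P$ which is nonempty (it contains $[q]$) and hence integral. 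By Theorem \ref{theorem stable component} a general point of $\mc P^{\tf}$ corresponds to a stable locally free quotient.

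Finally I would build the family and twist back down. Let $\G$ be the restriction to $C\times\mc P^{\tf}$ of the universal quotient sheaf on $C\times Q_{k,d'}(E')$; as the restriction of a sheaf flat over $Q_{k,d'}(E')$, it is flat over $\mc P^{\tf}$, its fibres are torsion-free of rank $k$ and degree $d'$, with $\G_{[q]}=F(n)$ and $\G_t$ stable locally free for general $t$. Writing $p_C\colon C\times\mc P^{\tf}\to C$ for the projection, set
$$\F:=\G\otimes p_C^*\O_C(-n).$$
Tensoring by a line bundle pulled back from $C$ preserves flatness over $\mc P^{\tf}$, so $\F$ is flat over $\mc P^{\tf}$; its fibre $\F_t=\G_t(-n)$ is torsion-free of rank $k$ and degree $d$, and is stable locally free for general $t$ because $\G_t$ is and stability is preserved under twisting by a line bundle. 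Moreover $\F_{[q]}=F(n)\otimes\O_C(-n)=F$, so $T:=\mc P^{\tf}$, the sheaf $\F$, and $t_0:=[q]$ prove the proposition. Beyond Theorem \ref{theorem torsion-free component} and Lemma \ref{lemma surjection to any twist} the only real content is this twisting bookkeeping --- choosing a fixed bundle $\O_C^{\oplus 3k}$ of which all high twists $F(n)$ are quotients, pushing $d'$ into the range $\geqslant d_4'$, and checking that flatness, torsion-freeness, stability, rank and degree behave correctly under $\otimes\O_C(-n)$ --- and I do not anticipate a genuine obstacle.
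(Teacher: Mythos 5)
Your proposal is correct and follows essentially the same route as the paper: realise a high twist $F(n)$ as a torsion-free quotient of a fixed trivial bundle via Lemma \ref{lemma surjection to any twist}, push the degree above $d_4$ so that Theorem \ref{theorem torsion-free component} makes the torsion-free locus irreducible with stable locally free general member, and twist the universal family back by $\O_C(-n)$. The only cosmetic differences are that you use $\O_C^{\oplus 3k}$ instead of $\O_C^{\oplus 2k+a}$ and describe $T$ as $\mc P\cap Q^{\tf}$ rather than as the torsion-free locus itself, which is the same set by Corollary \ref{corollary component P contains torsion-free}.
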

\begin{proof}
Let the type of $F$ be $a$.
For any degree $d'$, consider the Quot scheme $Q_{k,d'}(\O_C^{\oplus 2k+a})$.
Using Theorem \ref{theorem torsion-free component}, 
we get a constant $d'_4 = d_4(\O_C^{\oplus 2k+a},k)$, 
such that if $d' \geqslant d'_4$ then the torsion-free locus 
$Q^{\tf}_{k,d'}(\O_C^{\oplus 2k+a})$ is irreducible.  
Using Lemma \ref{lemma surjection to any twist},
we have $n_0$ such that if $n \geqslant n_0$ then there is a surjection 
$q: \O_C^{\oplus 2k+a} \to F(n) \to 0$.
Let us choose $n \geqslant n_0$ such that $\deg(F(n))=d+nk \geqslant d'_4$.
Then the surjection $q:\O_C^{\oplus 2k+a} \to F(n)$
corresponds to a closed point in the Quot scheme $Q_{k,d+nk}(\O_C^{\oplus 2k+a})$.
Moreover $F$ is torsion-free, so $[q]$ is a point of $Q^{\tf}_{k,d+nk}(\O_C^{\oplus 2k+a})$.
We take $T$ to be this torsion-free locus $Q^{\tf}_{k,d+nk}(\O_C^{\oplus 2k+a})$ with the reduced induced scheme structure.
As $d+nk \geqslant d'_4$,  $T$ is irreducible. 
Let $\F'$ denote the universal quotient sheaf on $C \times T$.
Then $\F'_q = F(n)$.
By Theorem \ref{theorem torsion-free component} it follows that $\F'_t$ is stable and locally free for a general point of $t \in T$.
Taking $\F := \F'(-n)$ the propostion follows.
\end{proof}

\section{Irreduciblity of the Quot scheme}\label{section Irreduciblity of the Quot scheme}
Let $E$ be a vector bundle on $C$ of rank $r$ and degree $e$ 
and let $k$ be an integer such that $0<k<r$.
In the previous section we saw that for large $d$, there is an irreducible component of $Q_{k,d}(E)$ which contains the locus of torsion-free quotients.
In this section we will show that for large $d$ the quot scheme is irreducible.
First we will prove the irreducibility for the case when $k>\frac{r}{2}$.
Next we will use this result to prove the irreducibility for all $k$.

Let $Z$ denote the complement of torsion-free locus in the quot scheme $Q_{k,d}(E)$. For a sheaf $F$, $\Tor(F)$ will denote the torsion subsheaf of $F$.
For $\delta >0$, let us define further subsets of $Z$,
\begin{align*}
    Z_\delta &:= \{[E \to F] \in Z: \quad \len(\Tor(F))=\delta\} \, ,\\
    Z^{\rm g} &:= \{ [E \to F] \in Z: \quad \text{the stalk $F_x$ is free }\} \, , \\
    Z^{\rm g}_\delta &:= Z^{\rm g} \cap Z_\delta \,.
\end{align*}
\noindent
We emphasize that, for a point $[E \to F] \in Z^g$, $\Tor(F)$ is supported away from the node and the stalk $F_x$ is free.

\begin{lemma}\label{lemma dimension Zdelta}
    Assume $\frac{r}{2}<k<r$. For any $d$ and $\delta>0$, consider the subsets $Z_\delta$ and $Z^{\rm g}_\delta$ in $Q_{k,d}(E)$.
    Then we have
    \begin{align*}
        \dim Z_\delta & \leqslant \dim(Q_{k,d-\delta}^{\tf}(E)) +(r-1)\delta \, ,\\
        \dim Z^{\rm g}_\delta &\leqslant \dim(Q_{k,d-\delta}^{\tf}(E)) +(r-k)\delta \,.
    \end{align*}
    Moreover $Z_\delta$ is non-empty only when $d-\delta \geqslant m(E,k)$.
\end{lemma}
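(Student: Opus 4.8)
The plan is to reduce a point of $Z_\delta$ to the pair consisting of its torsion-free quotient part and a length-$\delta$ torsion quotient of the corresponding kernel, and then to bound the dimension of the resulting parameter space. The non-emptiness statement is the easiest part: if $[q:E\to F]\in Z_\delta$ then $\bar F:=F/\Tor(F)$ is a torsion-free quotient of $E$ of rank $k$ and, since $\deg\Tor(F)=\len\Tor(F)=\delta$, of degree $d-\delta$, so $d-\delta\geqslant m(E,k)$ by definition of $m(E,k)$.

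For the dimension bounds, given $[q:E\to F]\in Z_\delta$ I would set $\bar F=F/\Tor(F)$, $\tau=\Tor(F)$, $S'=\ker(E\to\bar F)$ and $S_F=\ker q$; then $S_F\subseteq S'$ with $S'/S_F\cong\tau$ of length $\delta$, the sheaf $S'$ is torsion-free of rank $r-k$, and conversely the pair $\bigl([E\to\bar F],\,[S'\to\tau]\bigr)$ reconstructs $[q]$ (take $S_F=\ker(S'\to\tau)$, $F=E/S_F$; one checks $\Tor(F)=\tau$, of length $\delta$). Writing $\mathcal{S}'$ for the universal kernel on $C\times Q_{k,d-\delta}^{\tf}(E)$, I would form the relative Quot scheme $\mathcal{R}:=\Quot_{C\times Q_{k,d-\delta}^{\tf}(E)/Q_{k,d-\delta}^{\tf}(E)}(\mathcal{S}',0,\delta)$; after the routine flatness checks it carries a morphism to $Q_{k,d}(E)$ (built from the push-out of $0\to S_F\to S'\to\tau\to 0$ along $S'\hookrightarrow E$) whose image is exactly $Z_\delta$. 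Hence $\dim Z_\delta\leqslant\dim\mathcal{R}\leqslant\dim Q_{k,d-\delta}^{\tf}(E)+\max_{[\bar q]}\dim\Quot_{C/\C}(S_{\bar q},0,\delta)$, and everything comes down to the estimate $\dim\Quot_{C/\C}(S',0,\delta)\leqslant(r-1)\delta$ for $S'$ a kernel of a torsion-free rank-$k$ quotient of $E$ (with $\leqslant(r-k)\delta$ for quotients supported away from the node).

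To prove this I would stratify $\Quot_{C/\C}(S',0,\delta)$ by the length $\delta_x$ of the part of the torsion quotient supported at the node, with $\delta_o=\delta-\delta_x$ for the rest; since a finite-length sheaf is the direct sum of its localizations, each stratum is, up to a set-theoretic bijection, the product of the punctual quot scheme of $S'_x$ of length $\delta_x$ with the locus of length-$\delta_o$ torsion quotients of $S'$ supported away from $x$. The latter is easy: over $C\setminus\{x\}$ the sheaf $S'$ is locally free of rank $r-k$, and using the normalization $\pi$ (an isomorphism over $C\setminus\{x\}$) together with \cite[Theorem 4.1]{PR03} on the smooth curve $\wc$ one gets dimension $\leqslant(r-k)\delta_o$. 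For the punctual part: by Lemma \ref{lemma type of kernel} the type $a$ of $S'$ equals that of $\bar F$, so $a\leqslant\min(k,r-k)=r-k$ using $\frac{r}{2}<k$, and $S'_x\cong\O_{C,x}^{\oplus(r-k-a)}\oplus\m_x^{\oplus a}$; exactly as in the proof of Lemma \ref{lemma dimension of socle}, I would replace $C$ near $x$ by $V(\wt X\wt Y)\subset\P^2$ and regard $S'_x$ as a module over $\O_{\P^2,x}$, generated by $(r-k-a)+2a=r-k+a\leqslant 2(r-k)\leqslant r-1$ elements. A choice of generators realizes the punctual quot scheme of $S'_x$ of length $\delta_x$ inside the punctual quot scheme of $\O_{\P^2,x}^{\oplus M}$ of length $\delta_x$ with $M\leqslant r-1$, whose dimension is $\leqslant M\delta_x\leqslant(r-1)\delta_x$ by the surface estimate for punctual quot schemes of $\O^{\oplus M}$ that also underlies the socle bound in Lemma \ref{lemma dimension of socle} (cf.\ \cite[Lemma 2]{EL99}). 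Adding the two contributions, each stratum has dimension $\leqslant(r-1)\delta_x+(r-k)\delta_o\leqslant(r-1)\delta$, which proves the claim; and for $Z^{\rm g}_\delta$, where the torsion of $F$ lies away from $x$ and $\bar F$ is locally free, running the same construction over the locus $Q_{k,d-\delta}^{\tf,0}(E)$ of type-$0$ quotients and over torsion quotients supported away from $x$ gives fibres of dimension $\leqslant(r-k)\delta$ and hence the bound $\dim Z^{\rm g}_\delta\leqslant\dim Q_{k,d-\delta}^{\tf}(E)+(r-k)\delta$.

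I expect the node to be the main obstacle. On a smooth curve $\Quot_{C/\C}(S',0,\delta)$ has dimension exactly $(r-k)\delta$ and one would get the uniform bound $\dim Q_{k,d-\delta}^{\tf}(E)+(r-k)\delta$ in both cases; over a node, however, torsion quotients concentrated at the singular point move in positive-dimensional families (the punctual Hilbert scheme of a node is already a chain of rational curves, cf.\ \cite{Ran05}), and these families grow when $S'$ fails to be locally free at $x$. Controlling them is precisely what forces the weaker exponent $r-1$ for $Z_\delta$, and what makes the hypothesis $\frac{r}{2}<k$ — which guarantees $2(r-k)\leqslant r-1$, so that $S'_x$ needs at most $r-1$ generators over $\O_{\P^2,x}$ — indispensable.
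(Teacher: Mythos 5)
Your proposal is correct and follows essentially the same route as the paper: decompose a point of $Z_\delta$ into its torsion-free quotient $[E\to \bar F]\in Q^{\tf}_{k,d-\delta}(E)$ together with a length-$\delta$ torsion quotient of the kernel, realize $Z_\delta$ as the image of the corresponding relative Quot scheme, and use $k>\tfrac{r}{2}$ to bound the type of the kernel and hence the fibre dimension by $(r-1)\delta$ (resp.\ $(r-k)\delta$ in the locally free case). The only difference is that the paper simply asserts the fibre estimate $\dim Q_{0,\delta}(S')\leq(r-k+a)\delta$ for a type-$a$ kernel, whereas you justify it by stratifying by the length at the node and embedding the punctual part into a surface Quot scheme \`a la \cite{EL99}; this is a valid and more detailed account of a step the paper leaves implicit, not a different argument.
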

\begin{proof}
    Let $[\vp : E \to F]$ be a closed point in $Z_\delta$ and $\tau$ be the torsion subsheaf of $F$ which has length $\delta$.
    Then we can construct a torsion-free quotient $\vp' : E \to F'$,
    and a torsion quotient $\psi : S_{F'} \to \tau$,
    which fit into the following commutative diagram whose rows and columns are exact.
    \begin{equation}\label{diagram torsion}
    \begin{tikzcd}
    & & 0 \arrow{d}{} & 0 \arrow{d}{}\\
    0 \arrow{r}{} & S_F \arrow{r}{} \ar[d,-,double equal sign distance,double] & S_{F'} \arrow{d}{} \arrow{r}{\psi} & \tau \arrow{d}{} \arrow{r}{} & 0 \\
    0 \arrow{r}{} & S_F \arrow{r}{} & E \arrow{d}{\vp'} \arrow{r}{\vp} & F  \arrow{r}{} \arrow{d}{} & 0 \\
    & & F' \ar[r,-,double equal sign distance,double]  \arrow{d}{} & F' \arrow{d}{} \\
    & & 0  & 0 \,. \\
    \end{tikzcd}
    \end{equation}
    On the other hand, given the torsion-free quotient 
    $\vp' : E \to F'$ and the torsion quotient $\psi : S_{F'} \to \tau$ of length $\delta$, one can get back the quotient $\vp: E \to F$ 
    using the following push-out diagram
    $$ 
    \begin{tikzcd}
        0 \ar{r}{} & S_{F'} \ar{r}{} \ar{d}{\psi} & E \ar{r}{\vp'} \ar{d}{\vp} & F' \ar[d,-,double equal sign distance,double] \ar{r}{} & 0 \phantom{\, .}\\
        0 \ar{r}{} & \tau \ar{r}{} & F \ar{r}{} & F' \ar{r}{} & 0 \,.
    \end{tikzcd}
    $$
    This correspondence gives us 
    \begin{align*}
    \dim Z_\delta \leqslant & \dim(Q_{k,d-\delta}^{\tf}(E)) + \max_{q' \in Q_{k,d-\delta}^{\tf}(E)} \left\{\dim(Q_{0,\delta}(S_{F'}) \right\}\,.
    \end{align*}
The above argument can be made precise by constructing a map from a relative quot scheme onto the locus $Z_\delta$,
as in the proof of Lemma \ref{lemma dimension wt Z}.
    
    If $S_{F'}$ is of type $a$ then $\dim(Q_{0,\delta}(S_{F'})) \leqslant(r-k+a)\delta$. 
    The rank of $S_{F'}$ is $r-k$.
    As $k>\frac{r}{2}$, it follows that
    $r-k < \frac{r}{2}$. 
    As the type of $S_{F'}$ can be at most the rank of $S_{F'}$,
    we get that $a \leqslant r-k < \frac{r}{2}<k$.
    Thus $a \leqslant k-1$, from which we have
    $\dim(Q_{0,\delta}(S_{F'})) \leqslant (r-1)\delta$. 
    Hence we have 
    $$\dim Z_\delta \leqslant \dim(Q_{k,d-\delta}^{\tf}(E)) + (r-1)\delta \,.$$
    Now for points in $Z^{\rm g}_\delta$, the sheaf $S_{F'}$ is locally free of rank $r-k$ and hence
    $\dim(Q_{0,\delta}(S_{F'})) = (r-k)\delta$. 
    So we have
    \begin{align*}
    \dim Z^{\rm g}_\delta \leqslant & \dim(Q_{k,d-\delta}^{\tf}(E)) + (r-k)\delta \,.
    \end{align*}

    Observe that $Z_\delta$ is non-empty only when the quot scheme $Q_{k,d-\delta}^{\tf}(E)$ is non-empty.
    Recall that $m(E,k)$ is the minimal degree of a quotient of $E$ of rank $k$.
    So we must have $d-\delta \geqslant m(E,k)$.
    This proves the lemma.
\end{proof}

\begin{lemma}\label{lemma no general point of Z0delta}
    There exists a number $d_5(E,k)$ such that if $d \geqslant d_5$ then there is no component of $Q_{k,d}(E)$ whose general point is in $Z^{\rm g}$.
\end{lemma}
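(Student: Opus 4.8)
The plan follows the approach to the torsion locus sketched in the introduction. By Lemma~\ref{lemma dimension Zdelta}, $Z^{\rm g}$ is a finite union of the locally closed subsets $Z^{\rm g}_\delta$ with $d-\delta\geqslant m(E,k)$, and a component $\mc W$ of $Q_{k,d}(E)$ whose general point lies in $Z^{\rm g}$ must contain a dense open subset of some fixed $Z^{\rm g}_\delta$; so it suffices to fix $\delta$ and rule out such a $\mc W$. I will use that for $[q\colon E\to F]\in Z^{\rm g}$ the kernel $S_F:=\ker q$ is locally free of rank $r-k$ and degree $e-d$, and that by Proposition~\ref{proposition dimension bound} any component has dimension at least $dr-ke+k(r-k)(1-g)-(r-k)(2r+k)$, whose leading term in $d$ is $rd$. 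Fix $\mu_0=\mu_0(E,k)$ negative enough that every locally free sheaf $S$ of rank $r-k$ with $\mu_{\max}(S)\leqslant\mu_0$ has $\HOM(S,E)=S^\vee\otimes E$ globally generated and $H^1(S^\vee\otimes E)=0$; this holds for $\mu_0$ sufficiently negative, via the slope criterion \cite[Lemma 1.3.3]{HL} and Serre duality \eqref{equation Serre duality}, exactly as in Lemma~\ref{lemma H1 vanishing}. Let $\alpha_4=\alpha_4(E,k,\mu_0)$ be the threshold of Lemma~\ref{lemma dimension Xmu}. If $d-\delta<\alpha_4$, then $d-\delta$ runs over the finite set $\{m(E,k),\dots,\alpha_4-1\}$, so $\dim Q^{\tf}_{k,d-\delta}(E)$ is bounded by a constant $D_0(E,k)$, and Lemma~\ref{lemma dimension Zdelta} gives $\dim Z^{\rm g}_\delta\leqslant D_0+(r-k)\delta\leqslant D_0+(r-k)(d-m(E,k))$, with leading term $(r-k)d<rd$; hence for $d$ large this lies below the bound of Proposition~\ref{proposition dimension bound}, and $Z^{\rm g}_\delta$ is dense in no component.

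So assume $d-\delta\geqslant\alpha_4$, and set $Z^{\rm g,X}_\delta=\{[q\colon E\to F]\in Z^{\rm g}_\delta\colon\mu_{\max}(S_F)>\mu_0\}$. For $[q]\in Z^{\rm g}_\delta$ write $F'=F/\Tor(F)$ and $S_{F'}=\ker(E\to F')$; then $S_F\subseteq S_{F'}$, so $\mu_{\max}(S_F)\leqslant\mu_{\max}(S_{F'})$, and the assignment $[q]\mapsto[E\to F']$ sends $Z^{\rm g,X}_\delta$ into the locus $X_{\mu_0}\subseteq Q^{\tf}_{k,d-\delta}(E)$ of Lemma~\ref{lemma dimension Xmu}, with fibres of dimension at most $(r-k)\delta$ (the torsion quotients of the locally free sheaf $S_{F'}$ of length $\delta$, supported away from the node). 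Running the fibration argument of Lemma~\ref{lemma dimension Zdelta} with this restricted target yields $\dim Z^{\rm g,X}_\delta\leqslant\dim X_{\mu_0}+(r-k)\delta\leqslant(d-\delta)(r-1)+C_4+(r-k)\delta\leqslant(r-1)d+C_4$, again with leading term $(r-1)d<rd$. Hence for $d$ large this is strictly below the bound of Proposition~\ref{proposition dimension bound}; since $Z^{\rm g}_\delta\cap\mc W$ is dense in $\mc W$ while $Z^{\rm g,X}_\delta\cap\mc W$ is not, a general point $[q\colon E\to F]$ of $\mc W$ lies in $Z^{\rm g}_\delta$ with $\mu_{\max}(S_F)\leqslant\mu_0$, so $\HOM(S_F,E)$ is globally generated and $H^1(S_F^\vee\otimes E)=0$.

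It remains to deform the inclusion $S_F\hookrightarrow E$ to one with locally free cokernel. Dualizing, this is the situation of Lemma~\ref{lemma generic surjection} applied to $E^\vee$ and $S_F^\vee$ (of rank $r-k<r$): that argument uses only that $\HOM(E^\vee,S_F^\vee)=S_F^\vee\otimes E$ is globally generated with vanishing $H^1$, and concludes that a general $\psi\colon E^\vee\to S_F^\vee$ is surjective; equivalently, since all sheaves involved are locally free, a general $\varphi\colon S_F\to E$ is injective with locally free cokernel of rank $k$ and degree $d$. Letting $\varphi$ vary over the irreducible open subscheme $V\subseteq\Hom(S_F,E)$ of injective homomorphisms, the cokernels form a flat family of quotients of $E$ and thus define a morphism $g\colon V\to Q_{k,d}(E)$ whose irreducible image contains $[q]$ (the given inclusion) and a locally free quotient. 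For $d\geqslant d_4$ one has $Q^{\tf}_{k,d}(E)\subseteq\mc P$ by Corollary~\ref{corollary component P contains torsion-free}, and $\mc P$ is closed, so $g(V)\subseteq\mc P$ and $[q]\in\mc P$. As this holds on a dense open subset of $\mc W$, we get $\mc W=\mc P$, which contradicts that the general point of $\mc P$ is a stable locally free quotient and hence not in $Z^{\rm g}$. Taking $d_5$ to be the maximum of $d_4$ and the finitely many explicit thresholds that make the two dimension inequalities above strict, the lemma follows.

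The hard part is the range of moderate $\delta$ inside the case $d-\delta\geqslant\alpha_4$: there the naive estimate $\dim Z^{\rm g}_\delta\leqslant\dim Q^{\tf}_{k,d-\delta}(E)+(r-k)\delta=dr-ke+k(r-k)(1-g)-k\delta$ coming from Lemma~\ref{lemma dimension Zdelta} need not fall below the cohomological lower bound of Proposition~\ref{proposition dimension bound}, so a pure dimension count fails and one genuinely has to move the locally free subsheaf $\ker q$ inside $E$. The delicate input for that is controlling $\mu_{\max}(\ker q)$ at the general point of $\mc W$, which is provided precisely by the linear-in-$d$ bound on $X_{\mu_0}$ from Lemma~\ref{lemma dimension Xmu} (together with Lemma~\ref{lemma dimension Zdelta}).
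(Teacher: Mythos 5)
Your argument is, in outline, workable, but it takes a long detour around a one-line observation that the paper exploits, and your closing claim that ``a pure dimension count fails'' for $Z^{\rm g}$ is not correct. The point you missed is that for $[q\colon E\to F]\in Z^{\rm g}$ the kernel $S_F$ is \emph{locally free} (it is a direct summand of $E_x$ at the node, since $F_x$ is free), so the pointwise lower bound \eqref{dimension bound} at such a point is not the defective bound of Proposition~\ref{proposition dimension bound} but the exact Euler characteristic
$\hom(S_F,F)-\ext^1(S_F,F)=\chi(S_F^\vee\otimes F)=dr-ke+k(r-k)(1-g)$,
with no $-(r-k)(2r+k)$ correction. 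Hence any component $\mc W$ whose general point lies in $Z^{\rm g}_\delta$ satisfies $\dim\mc W\geqslant dr-ke+k(r-k)(1-g)$, while Lemma~\ref{lemma dimension Zdelta} gives $\dim Z^{\rm g}_\delta\leqslant\dim Q^{\tf}_{k,d-\delta}(E)+(r-k)\delta$, which for $d-\delta\geqslant d_4$ equals $dr-ke+k(r-k)(1-g)-k\delta<\dim\mc W$; the finitely many degrees $m(E,k)\leqslant d-\delta<d_4$ are absorbed into the threshold exactly as in your ``small $d-\delta$'' case. That is the paper's entire proof. Your deformation argument (moving the inclusion $S_F\hookrightarrow E$ inside $\Hom(S_F,E)$ to one with locally free cokernel and invoking Corollary~\ref{corollary component P contains torsion-free}) is the strategy the paper reserves for the genuinely hard locus where the kernel is \emph{not} locally free (Theorem~\ref{main theorem k>r/2}); deploying it here is logically admissible but unnecessary, and your first two reductions (the $d-\delta<\alpha_4$ case and the $X_{\mu_0}$ bound) are correct as far as they go.

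The one step of your proof that is not adequately justified is the choice of $\mu_0$: you assert that $\mu_{\max}(S)\leqslant\mu_0$ forces $S^\vee\otimes E$ to be globally generated ``exactly as in Lemma~\ref{lemma H1 vanishing}'', but that lemma only proves the $H^1$-vanishing. Global generation requires $H^1(S^\vee\otimes E\otimes\m_c)=0$ for every closed point $c$, including the node, where $\m_x$ is not invertible; by Serre duality this amounts to $\Hom(E\otimes(\omega^\circ_C)^\vee\otimes\m_c,\,S)=0$, and one must bound $\mu_{\min}$ of the torsion-free (non--locally-free) sheaf $E\otimes(\omega^\circ_C)^\vee\otimes\m_x$ before the slope criterion applies. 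This is true and fillable, but it is genuinely more than a citation, and it is the kind of technical cost that the paper's direct dimension count avoids entirely.
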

\begin{proof}
    First we define $d_5$. 
    Recall
    that $m(E,k)$ is the minimal degree of a quotient of $E$ of rank $k$. 
    Also recall that by Theorem \ref{theorem torsion-free component},
    if $d \geqslant d_4(E,k)$ then the torsion-free locus $Q^{\tf}_{k,d}(E)$ is irreducible of dimension $dr-ke+k(r-k)(1-g)$.
    Let us define
    $$ M = \max_{m(E,k) \leqslant d' < d_4}\left\{\dim Q^{\tf}_{k,d'}(E) - [d'r-ke+k(r-k)(1-g)] +d'k\right\}$$
    and $$d_5 := \max\{d_4, \frac{M}{k}+1\}\,.$$

    Now we assume $d \geqslant d_5$. 
    If possible, let $\mc W$ be a component of $Q_{k,d}(E)$ whose general point corresponds to a quotient $[\vp: E \to F]$ in $Z^{\rm g}_{\delta}$, where $\delta>0$. 
    Using Lemma \ref{lemma dimension Zdelta}, we have $d-\delta \geqslant m(E,k)$.
    The stalk $F_x$ is free and 
    so the kernel $S_F$ of $\vp$ is locally free.
    The dimension bound in \eqref{dimension bound} gives 
    \begin{align*}
        \dim_{[\vp]}Q_{k,d}(E) &\geqslant \hom(S_F,F) - \ext^1(S_F,F) \\ 
        & = \chi(S_F^\vee \otimes F) \\
        & = dr-ke+k(r-k)(1-g)\,.
    \end{align*}
    So we have 
    \begin{equation}
     \dim \mc W \geqslant  dr-ke+k(r-k)(1-g)\,.
    \end{equation}
    Also from Lemma \ref{lemma dimension Zdelta}, we have
    \begin{align*}
    \dim Z^{\rm g}_\delta \leqslant \dim(Q_{k,d-\delta}^{\tf}(E)) + (r-k)\delta \,.
    \end{align*}
    So
    \begin{align*}
    \dim Z^{\rm g}_\delta - \dim \mc W & \leqslant \dim Q^{\tf}_{k,d-\delta}(E) + (r-k)\delta - [dr-ke+k(r-k)(1-g)]  \\
    & = \dim Q^{\tf}_{k,d-\delta}(E) - [(d-\delta)r-ke+k(r-k)(1-g)] -\delta k \,.
    \end{align*}
    Observe that if $d-\delta \geqslant d_4$ then, by Theorem \ref{theorem torsion-free component},
    $Q^{\tf}_{k,d-\delta}(E)$ has dimension $[(d-\delta)r-ke+k(r-k)(1-g)]$.
    So the right hand side is negative. 
    Otherwise we have $m(E,k) \leqslant d-\delta < d_4$. As $d > M/k$, in this case we have
    $$dk > M \geqslant \dim Q^{\tf}_{k,d-\delta}(E) - [(d-\delta)r-ke+k(r-k)(1-g)] + (d-\delta)k$$
    which implies that 
    $$ \dim Q^{\tf}_{k,d-\delta}(E) - [(d-\delta)r-ke+k(r-k)(1-g)] - \delta k < 0\,.$$
    So the difference $\dim Z^{\rm g}_\delta - \dim \mc W$ is negative.
    This shows that the points of $Z^{\rm g}_\delta$ cannot be general in $\mc W$.
\end{proof}

\begin{proposition}\label{Zg in P}
    For $d \geqslant d_5(E,k)$, the locus $Z^{\rm g}$ in $Q_{k,d}(E)$ is contained in the closure of the torsion-free locus $Q^{\tf}_{k,d}(E)$.
\end{proposition}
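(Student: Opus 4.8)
The plan is to prove the statement by deforming, for each point of $Z^{\rm g}$, the associated inclusion $\ker q_0\hookrightarrow E$ until its cokernel becomes torsion‑free. Fix $d\geq d_5$ and a closed point $[q_0\colon E\to F_0]\in Z^{\rm g}$, and set $\delta=\len(\Tor(F_0))$. Since $F_0$ is free at the node, its kernel $S:=\ker q_0$ is a \emph{locally free} sheaf of rank $r-k$ and degree $e-d$, and the datum of $[q_0]$ is nothing but the inclusion $\iota_0\colon S\hookrightarrow E$. I would first form the affine space $\Hom(S,E)=H^0(S^\vee\otimes E)$ and its open subscheme $H^\circ$ of injective homomorphisms (open, since injectivity of a map out of a torsion‑free sheaf is detected at the generic point). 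For $\iota\in H^\circ$ the sheaf $\coker(\iota)$ has rank $k$ and degree $d$, hence constant Hilbert polynomial, so $\coker$ is flat over $H^\circ$ and the tautological quotient on $C\times H^\circ$ induces a morphism $g\colon H^\circ\to Q_{k,d}(E)$ with $g(\iota_0)=[q_0]$. As $H^\circ$ is irreducible, $\overline{g(H^\circ)}$ is irreducible; and because $Q_{k,d}^{\tf}(E)$ is open, once one knows that $g(H^\circ)$ meets $Q_{k,d}^{\tf}(E)$ it follows that $Q_{k,d}^{\tf}(E)\cap\overline{g(H^\circ)}$ is a nonempty open, hence dense, subset of the irreducible set $\overline{g(H^\circ)}$, so that $[q_0]\in\overline{g(H^\circ)}\subseteq\overline{Q_{k,d}^{\tf}(E)}$, as desired.

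The heart of the matter is therefore to exhibit \emph{one} injective homomorphism $S\hookrightarrow E$ with torsion‑free cokernel. Near the node torsion‑freeness is automatic for $\iota$ close to $\iota_0$ (being free at $x$ is an open condition, and $\coker\iota_0=F_0$ is free at $x$), so away from $x$ what is needed is a homomorphism $S\to E$ that drops rank nowhere. A generic $\iota\in\Hom(S,E)$ has this property provided $S^\vee\otimes E$ is globally generated: then the locus of rank‑dropping pairs in $\Hom(S,E)\times C$ has codimension at least $k+1\geq 2$, hence does not dominate $\Hom(S,E)$, so a generic $\iota$ has locally free cokernel of degree $d$ and $g(\iota)\in Q_{k,d}^{\tf}(E)$. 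To produce enough positivity I would exploit that $S\subseteq E$ forces $\mu_{\max}(S)\leq\mu_{\max}(E)$, so that (for our fixed $d$) the collection of all kernels occurring at points of $Z^{\rm g}$ is bounded and the twists $S^\vee\otimes E$ have $\mu_{\min}$ bounded below uniformly; together with Lemma~\ref{lemma no general point of Z0delta} and the dimension estimates of Section~\ref{section dimension of subsets} this should let one control, and ultimately avoid, the rank‑dropping locus.

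The step I expect to be the main obstacle is precisely this last one: in the nodal setting $S^\vee\otimes E$ need \emph{not} be globally generated at an arbitrary point of $Z^{\rm g}$, so a bare generic‑section argument is not available. The way around it, which I would carry out in detail, is to first use the push‑out description from the proof of Lemma~\ref{lemma dimension Zdelta} — writing $[q_0]$ as the push‑out of a locally free quotient $[E\to F']\in Q_{k,d-\delta}^{\tf}(E)$ along a torsion quotient $S_{F'}\twoheadrightarrow\tau$ of its (subbundle) kernel — to move $[q_0]$ \emph{within} $Z^{\rm g}$ to a point whose kernel $S$ is as generic as possible, and only then run the $\Hom(S,E)$‑deformation above. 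Lemma~\ref{lemma no general point of Z0delta} is what then guarantees that the resulting irreducible family of quotients cannot stay entirely inside $Z$, forcing it to meet $Q_{k,d}^{\tf}(E)$ and completing the proof.
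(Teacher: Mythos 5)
Your overall framework is workable, but the step you yourself flag as ``the main obstacle'' is a genuine gap, and the mechanism you propose for closing it does not work. Lemma \ref{lemma no general point of Z0delta} asserts that no irreducible \emph{component} of $Q_{k,d}(E)$ has its general point in $Z^{\rm g}$; it says nothing about an arbitrary irreducible subfamily such as $\overline{g(H^\circ)}$ or the push-out family you construct afterwards. Such a family can perfectly well lie entirely inside $Z$ (e.g.\ fix a torsion-free quotient $E\to F'$ and vary only the torsion datum $S_{F'}\twoheadrightarrow\tau$), so the lemma cannot force $g(H^\circ)$ to meet $Q^{\tf}_{k,d}(E)$. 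Likewise, the positivity you invoke is insufficient: $S\subseteq E$ does give $\mu_{\max}(S)\leq\mu_{\max}(E)$, but that only yields $\mu_{\min}(S^\vee\otimes E)\geqslant \mu_{\min}(E)-\mu_{\max}(E)$, a constant that does not improve as $d$ grows, so global generation of $S^\vee\otimes E$ is not available even after moving to a ``generic'' point of $Z^{\rm g}$. The paper's Section \ref{section Irreduciblity of the Quot scheme} shows what a kernel-deformation argument actually costs: one must work at a general point of a full component to get $\Ext^1(\ker q,E)=0$ (Proposition \ref{proposition Ext1(S,E) vanishinig}, via the $X_{\mu_0}$ estimate), deform $S$ itself to a \emph{stable} bundle of the same negative degree (Proposition \ref{proposition deformation of torsion free sheaves to stable locally free}) so that Lemma \ref{lemma general cokernel free at node} applies --- and even then the generic cokernel is only shown to be free at the node, after which the paper invokes Proposition \ref{Zg in P} itself. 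Proving \ref{Zg in P} by this route therefore risks circularity.

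The paper's actual proof is much softer and avoids deforming the kernel altogether: take any component $\mc W$ containing the given point of $Z^{\rm g}$, and let $V\subseteq\mc W$ be the locus of quotients whose stalk at $x$ is free. Then $V$ is open and nonempty in $\mc W$, and every point of $V$ is either a torsion-free quotient or lies in $Z^{\rm g}$. If $V$ contained no torsion-free quotient, then $V\subseteq Z^{\rm g}$ and the general point of the irreducible component $\mc W$ would lie in $Z^{\rm g}$, contradicting Lemma \ref{lemma no general point of Z0delta}; hence $V$ meets $Q^{\tf}_{k,d}(E)$ and $\mc W=\mc P=\overline{Q^{\tf}_{k,d}(E)}$ by Corollary \ref{corollary component P contains torsion-free}. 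If you replace your family $g(H^\circ)$ by this open subset $V$ of a full component, the dimension count of Lemma \ref{lemma no general point of Z0delta} becomes applicable and your argument closes immediately; as written, it does not.
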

\begin{proof}
    Recall the irreducible component $\mc P$ defined in Remark \ref{remark component P}, which contains the locus of torsion-free quotients in $Q_{k,d}(E)$.
    We need to show that $Z^{\rm g}$ is contained in $\mc P$.
    Let, if possible, $[\vp : E \to F]$ be a quotient in $Z^{\rm g}$ which is not in the component $\mc P$.
    Let $\mc W$ be a component containing the point $[\vp : E \to F]$.
    Consider the open subset $V = \{[q: E \to G] \in \mc W :$ the stalk $G_x$ is free\}.
    Note that $[\vp: E \to F] \in V$, which shows that $V$ is non-empty.
    For any quotient $[q: E \to G] \in V$, either $G$ is torsion-free or $[q: E \to G] \in Z^{\rm g}$.
    If $V$ contains a torsion-free quotient then we must have $\mc W = \mc P$, 
    by Corollary \ref{corollary component P contains torsion-free}.
    This is a contradiction.
    So $V$ cannot contain any torsion-free quotient.
    In other words any point of $V$ is contained in $Z^{\rm g}$.
    But this is a contradiction as the points of
    $Z^{\rm g}$ cannot be general in any component by Lemma \ref{lemma no general point of Z0delta}.
    So we conclude that the point $[\vp:E \to F]$ is in $\mc P$ and hence $Z^{\rm g}$ is contained in $\mc P$. 
\end{proof}
For ease of notation, let us define the number
$$ \delta_0 := (r-k)(2r+k)\,.$$
Recall that Proposition \ref{proposition dimension bound} says that any irreducible component of $Q_{k,d}(E)$ has dimension at least $[dr-ke+k(r-k)(1-g)] - \delta_0$.

\begin{lemma}\label{lemma bound on torsion for general point}
    Assume $\frac{r}{2}<k<r$. There exists a number $d_6(E,k)$ such that if $d \geqslant d_6$ then we have the following.
    If $\mc W$ is any component of $Q_{k,d}(E)$ and $[E \to F]$ is a general point of $\mc W$ then length$(\Tor(F)) \leqslant \delta_0$.
\end{lemma}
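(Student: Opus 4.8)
The plan is to rule out, for $d$ large, the existence of a component $\mc W$ whose general point $[E\to F]$ has $\len(\Tor(F)) = \delta > \delta_0$, by squeezing $\dim\mc W$ between the universal lower bound of Proposition \ref{proposition dimension bound} and the upper bound for the torsion strata furnished by Lemma \ref{lemma dimension Zdelta}.

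First I would fix the constant $d_6$. By Theorem \ref{theorem torsion-free component}, for every $d' \ge d_4$ the scheme $Q^{\tf}_{k,d'}(E)$ is irreducible of dimension $d'r - ke + k(r-k)(1-g)$, while by Lemma \ref{lemma dimension Zdelta} the stratum $Z_\delta$ is empty unless $d-\delta \ge m(E,k)$; hence only the finitely many torsion-free Quot schemes $Q^{\tf}_{k,d'}(E)$ with $m(E,k)\le d' < d_4$ require separate attention, and I let $M$ be the maximum of their dimensions (with $M=0$ if there are none). Then I would set
$$ d_6 := \max\bigl\{\, d_4,\ M - (r-1)\,m(E,k) + ke - k(r-k)(1-g) + \delta_0 + 1 \,\bigr\}. $$

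Now, assuming $d \ge d_6$, let $\mc W$ be any component. Since $\mc W$ is irreducible and is partitioned by the (constructible) loci $\mc W \cap Q^{\tf}_{k,d}(E)$ and $\mc W \cap Z_\delta$ for $0 < \delta \le d - m(E,k)$, one of these is dense in $\mc W$; if the torsion-free one is dense then $\len(\Tor(F)) = 0 \le \delta_0$ for the general point and there is nothing to prove, so assume $\mc W \cap Z_\delta$ is dense for some $\delta > 0$. Then a general $[E\to F]$ of $\mc W$ lies in $Z_\delta$, so $\mc W \subseteq \overline{Z_\delta}$ and $\dim\mc W \le \dim Z_\delta$. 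Suppose, for contradiction, $\delta > \delta_0$. Combining Proposition \ref{proposition dimension bound} with Lemma \ref{lemma dimension Zdelta} — this is the step that uses $\frac r2 < k < r$ — gives
$$ dr - ke + k(r-k)(1-g) - \delta_0 \ \le\ \dim\mc W \ \le\ \dim Q^{\tf}_{k,d-\delta}(E) + (r-1)\delta . $$
If $d - \delta \ge d_4$, then $\dim Q^{\tf}_{k,d-\delta}(E) = (d-\delta)r - ke + k(r-k)(1-g)$ by Theorem \ref{theorem torsion-free component}, and the inequality collapses to $\delta \le \delta_0$, a contradiction. If instead $m(E,k) \le d - \delta < d_4$, then $\dim Q^{\tf}_{k,d-\delta}(E) \le M$ and $\delta \le d - m(E,k)$, so the displayed inequality rearranges to $d \le M - (r-1)m(E,k) + ke - k(r-k)(1-g) + \delta_0 < d_6$, again a contradiction. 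Hence $\delta \le \delta_0$.

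The arithmetic is routine; the substance lives entirely in the two inputs, and the only genuinely delicate point — the reason the lemma is stated only for $k > \frac r2$ — is the bound $\dim Z_\delta \le \dim Q^{\tf}_{k,d-\delta}(E) + (r-1)\delta$ of Lemma \ref{lemma dimension Zdelta}: there the kernel of the associated torsion-free quotient has rank $r-k < \frac r2 < k$, so its type is at most $r-k \le k-1$, and the punctual Quot scheme at the node contributes only $(r-1)\delta$ rather than more. For $k \le \frac r2$ this coefficient would be too large and the count would no longer close, which is precisely why the general case must be bootstrapped from this one afterwards. Beyond keeping track of the case split according to whether $d-\delta \ge d_4$, I do not expect any real obstacle.
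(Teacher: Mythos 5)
Your proposal is correct and follows essentially the same route as the paper: squeeze $\dim \mc W$ between the cohomological lower bound of Proposition \ref{proposition dimension bound} and the upper bound on $\dim Z_\delta$ from Lemma \ref{lemma dimension Zdelta}, splitting into the cases $d-\delta \geqslant d_4$ and $m(E,k)\leq d-\delta < d_4$. The only difference is cosmetic bookkeeping: the paper absorbs the correction terms into its definition of $M$, whereas you keep $M$ as a pure maximum of dimensions and push the corrections into the definition of $d_6$; the resulting arithmetic is equivalent.
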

\begin{proof}
We will use the same technique as in Lemma \ref{lemma no general point of Z0delta}. 
First let us define $d_6(E,k)$.
Define
    $$ M = \max_{m(E,k) \leqslant d' < d_4}
    \left\{\dim Q^{\tf}_{k,d'}(E) - (d'r-ke+k(r-k)(1-g)) + \delta_0 + d'\right\}$$
    and $$d_6 := \max\{d_4, M+1\}\,.$$
    
    Now we assume $d \geqslant d_6$ and $\delta > \delta_0$.
    If possible, let $\mc W$ be a component of $Q_{k,d}(E)$ whose general point corresponds to a quotient $[\vp: E \to F]$ in $Z_{\delta}$. 
    The dimension bound from Proposition \ref{proposition dimension bound} gives 
    \begin{equation}
     \dim \mc W \geqslant  dr-ke+k(r-k)(1-g) - \delta_0\,.
    \end{equation}
    From Lemma \ref{lemma dimension Zdelta}, we have
    \begin{align*}
    \dim Z_\delta \leqslant \dim(Q_{k,d-\delta}^{\tf}(E)) +(r-1)\delta \,.
    \end{align*}
    So
    \begin{align*}
    \dim Z_\delta - \dim \mc W & \leqslant \dim Q^{\tf}_{k,d-\delta}(E) + (r-1)\delta - [dr-ke+k(r-k)(1-g)] +\delta_0  \\
    & = \dim Q^{\tf}_{k,d-\delta}(E) - [(d-\delta)r-ke+k(r-k)(1-g)] -\delta +\delta_0 \,.
    \end{align*}
    Observe that, if $d-\delta \geqslant d_4$ then by Theorem \ref{theorem torsion-free component},
    $Q^{\tf}_{k,d-\delta}(E)$ has dimension $[(d-\delta)r-ke+k(r-k)(1-g)]$.
    In this case the right hand side is negative as $\delta > \delta_0$. 
    Otherwise we have $m(E,k) \leqslant d-\delta < d_4$. As $d > M$, in this case we have
    $$d > M \geqslant \dim Q^{\tf}_{k,d-\delta}(E) - [(d-\delta)r-ke+k(r-k)(1-g)] + \delta_0 + (d-\delta)$$
    which implies that 
    $$ \dim Q^{\tf}_{k,d-\delta}(E) - [(d-\delta)r-ke+k(r-k)(1-g)] - \delta +\delta_0 < 0\,.$$
    So the difference $\dim Z_\delta - \dim \mc W$ is negative.
    This shows that the points of $Z_\delta$ cannot be general in $\mc W$.
\end{proof}

\begin{proposition}\label{proposition Ext1(S,E) vanishinig}
    Assume $\frac{r}{2}<k<r$. There exists a number $d_7(E,k)$ such that if $d \geqslant d_7$
    then the following holds.
    Let $\mc W$ be an irreducible component of $Q_{k,d}(E)$ whose general point corresponds to a quotient in $Z_\delta$, with $\delta >0$.
    Then a general point $[q:E \to F]$ of $\mc W$, satisfies $Ext^1(\ker q,E)=0$.
\end{proposition}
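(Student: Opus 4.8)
\emph{Overview and Step 1 (cohomological reduction).} The plan is to reduce $\Ext^1(\ker q,E)=0$ to a slope bound on $\ker q$ at a general point of $\mc W$, and then to prove that bound by a dimension count of the same flavour as Lemmas \ref{lemma dimension Zdelta}--\ref{lemma bound on torsion for general point}, fed by the estimate of Lemma \ref{lemma dimension Xmu}. Fix $[q:E\to F]\in Q_{k,d}(E)$ and write $S_F:=\ker q$; as $E$ is locally free, $S_F$ is torsion-free of rank $r-k$. Exactly as in the proof of Lemma \ref{lemma dual isomorphism} (using $\Ext^1_{\O_{C,x}}(\m_x,\O_{C,x})=0$ from \cite[Lemma 2.1]{bh06}) one gets $\EXT^1(S_F,\O_C)=0$, hence $\EXT^1(S_F,E)=\EXT^1(S_F,\O_C)\otimes E=0$, so the local-to-global spectral sequence gives $\Ext^1(S_F,E)=H^1(C,\HOM(S_F,E))=H^1(C,S_F^\vee\otimes E)$. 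By Serre duality \eqref{equation Serre duality} and reflexivity of torsion-free sheaves on $C$ \cite[Lemma 2.6(i)]{bh06},
$$\Ext^1(S_F,E)\;\cong\;\Hom_C(S_F^\vee\otimes E,\omega^{\circ}_C)^\vee\;\cong\;\Hom_C(E,S_F\otimes\omega^{\circ}_C)^\vee\,.$$
By \cite[Lemma 1.3.3]{HL} the right side vanishes once $\mu_{\min}(E)>\mu_{\max}(S_F\otimes\omega^{\circ}_C)=\mu_{\max}(S_F)+(2g-2)$. So I fix $\mu_0:=\mu_{\min}(E)-(2g-2)-1$, which depends only on $E$, and it suffices to prove $\mu_{\max}(\ker q)\leqslant\mu_0$ for a general point $[q]$ of $\mc W$.

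\emph{Step 2 (bounding the bad locus).} Put $Z_\delta^{\mathrm b}:=\{[q:E\to F]\in Z_\delta:\mu_{\max}(\ker q)>\mu_0\}$. Running the correspondence from the proof of Lemma \ref{lemma dimension Zdelta}: a point of $Z_\delta$ produces a torsion-free quotient $[\vp':E\to F']\in Q^{\tf}_{k,d-\delta}(E)$ and a length-$\delta$ torsion quotient $\psi:S_{F'}\to\tau$, together with an inclusion $S_F\hookrightarrow S_{F'}$ with torsion cokernel; in particular $\mu_{\max}(S_F)\leqslant\mu_{\max}(S_{F'})$, so if $[q]\in Z_\delta^{\mathrm b}$ then $[\vp']$ lies in the locus $X_{\mu_0}\subset Q^{\tf}_{k,d-\delta}(E)$ of Lemma \ref{lemma dimension Xmu}. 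Hence $Z_\delta^{\mathrm b}$ is contained in the image, under the map $\Quot_{C\times B/B}(\S_0,0,\delta)\to Q_{k,d}(E)$ of Lemma \ref{lemma dimension Zdelta} (with $B=Q^{\tf}_{k,d-\delta}(E)$ and $\S_0$ the universal kernel), of the preimage of $X_{\mu_0}$. Since $\frac{r}{2}<k<r$, the fibres $Q_{0,\delta}(S_{F'})$ of that map have dimension $\leqslant(r-1)\delta$ (as in Lemma \ref{lemma dimension Zdelta}), while Lemma \ref{lemma dimension Xmu} yields constants $C_4,\alpha_4$ (depending only on $E,k,\mu_0$) with $\dim X_{\mu_0}\leqslant(d-\delta)(r-1)+C_4$ whenever $d-\delta\geqslant\alpha_4$. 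Therefore
$$\dim Z_\delta^{\mathrm b}\;\leqslant\;\dim X_{\mu_0}+(r-1)\delta\;\leqslant\;(d-\delta)(r-1)+C_4+(r-1)\delta\;=\;d(r-1)+C_4\,,$$
provided $d-\delta\geqslant\alpha_4$.

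\emph{Step 3 (conclusion).} Let $d_7$ be the maximum of $d_6$, of $\alpha_4+\delta_0$, and of the least $d$ with $d(r-1)+C_4<dr-ke+k(r-k)(1-g)-\delta_0$, and assume $d\geqslant d_7$. Since the general point of $\mc W$ lies in $Z_\delta$, Lemma \ref{lemma bound on torsion for general point} forces $\delta\leqslant\delta_0$, so $d-\delta\geqslant d-\delta_0\geqslant\alpha_4$ and Step 2 applies; combined with Proposition \ref{proposition dimension bound},
$$\dim Z_\delta^{\mathrm b}\;\leqslant\;d(r-1)+C_4\;<\;dr-ke+k(r-k)(1-g)-\delta_0\;\leqslant\;\dim\mc W\,.$$
Thus $Z_\delta^{\mathrm b}$ is not dense in $\mc W$; as $\mc W$ is irreducible with general point in $Z_\delta$, a general point $[q]$ of $\mc W$ lies in $Z_\delta\setminus Z_\delta^{\mathrm b}$, i.e. $\mu_{\max}(\ker q)\leqslant\mu_0$, and hence $\Ext^1(\ker q,E)=0$ by Step 1. (Upper semicontinuity of $\ext^1$ on the Quot scheme shows the vanishing locus is open, so this is indeed "general".)

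\emph{Expected main obstacle.} The delicate point is Step 2: one must realize $Z_\delta^{\mathrm b}$ as the image of the appropriate sublocus of the \emph{relative} Quot scheme used in Lemma \ref{lemma dimension Zdelta} (not of the whole thing), and check that the inclusion $S_F\hookrightarrow S_{F'}$ with the slope inequality $\mu_{\max}(S_F)\leqslant\mu_{\max}(S_{F'})$ is compatible with the universal families. This is a routine refinement of Lemma \ref{lemma dimension Zdelta} rather than a new difficulty; the genuine input is packaged in Lemma \ref{lemma dimension Xmu}.
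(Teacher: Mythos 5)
Your proof is correct and follows essentially the same route as the paper's: reduce $\Ext^1(\ker q,E)=0$ to a bound on $\mu_{\max}(\ker q)$ via Serre duality, pass through the relative Quot scheme over $Q^{\tf}_{k,d-\delta}(E)$ using $\ker q\subset\ker q'$ and $\delta\leqslant\delta_0$ from Lemma \ref{lemma bound on torsion for general point}, and feed in Lemma \ref{lemma dimension Xmu}. The only (cosmetic) differences are that you bound the bad locus $Z_\delta^{\mathrm b}$ from above inside $Q_{k,d}(E)$ where the paper instead bounds the image $D'$ from below and compares it with $X_{\mu_0}$, and your threshold $\mu_0=\mu_{\min}(E\otimes(\omega^{\circ}_C)^\vee)-1$ is the sign-corrected version of the paper's.
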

\begin{proof}
First we define $d_7$.
Let $\mu_0:= \mu_{\min}(E \otimes (\omega^\circ_C)^\vee)+1$.
    Using Lemma \ref{lemma dimension Xmu}, we have constants $C_4(E,k,\mu_0)$ and 
    $\alpha_4(E,k,\mu_0)$. 
    Define 
    $$d_7 := \max\{d_6, \alpha_4+\delta_0, 2\delta_0+C_4-[-ke+k(r-k)(1-g)]+1\}\,.$$
Consider the subset $X_{\mu_0}$ in $Q_{k,d-\delta}(E)$,
   $$X_{\mu_0} = \{ [q':E \to F'] \in Q_{k,d-\delta}(E): \mu_{\max}(\ker q') > \mu_0 \}\,.$$
By our choice of $d_7$ and Lemma \ref{lemma dimension Xmu},
if $d \geqslant d_7$ then we have
\begin{equation}\label{equation dimension Xmu}
       \dim X_{\mu_0} < dr-ke+k(r-k)(1-g)-(r+1)\delta_0\,.
\end{equation}

    Assume $d \geqslant d_7$.
    Let $\mc W$ be an irreducible component of $Q_{k,d}(E)$ whose general point is in $Z_\delta$ for some $\delta >0$.
As $d \geqslant d_6$, using Lemma \ref{lemma bound on torsion for general point} we have $\delta \leqslant \delta_0$.
Let $D$ denote the torsion-free locus of the quot scheme $Q_{k,d-\delta}(E)$ i.e.
    $$ D := Q^{\tf}_{k,d-\delta}(E)\,.$$
    Let $\S'$ denote the universal kernel sheaf on $C \times D$.
    Consider the relative quot scheme 
    $$H := \Quot_{C \times D/D}(\S',0,\delta)\,.$$
    A closed point of $H$ corresponds to the pair of quotients 
    $(q':E \to F', \lambda: \ker q' \to \tau)$.
    Given the point $(q',\lambda)$ in $H$, one can construct a quotient $q:E \to F$ using the following push-out diagram
    $$ 
    \begin{tikzcd}
        0 \ar{r}{} & \ker q' \ar{r}{} \ar{d}{\lambda} & E \ar{r}{q'} \ar{d}{q} & F' \ar[d,-,double equal sign distance,double] \ar{r}{} & 0 \phantom{\,.}\\
        0 \ar{r}{} & \tau \ar{r}{} & F \ar{r}{} & F' \ar{r}{} & 0 \,.
    \end{tikzcd}
    $$
    It is clear that the quotient $[q:E \to F]$ corresponds to a point in the quot scheme $Q_{k,d}(E)$.
    One can check that there is a map of schemes
    $$g : H \to Q_{k,d}(E)$$ 
    which sends the point $(q',\lambda)$ to the point $[q:E \to F]$.
    Clearly, $H$ maps onto the subset $Z_\delta$. 
    Thus, there is an irreducible component, say $H'$, of $H$ which maps to $\mc W$
    and $g(H')$ is dense in $\mc W$.
    There is a structure map $\rho : H \to D$ which sends the closed point $(q',\lambda)$ to $[q']$.
    Let $D':= \rho(H')$.
    We have the following diagram where all the schemes are irreducible,
    $$ 
    \begin{tikzcd}
        H' \ar{r}{g} \ar{d}{\rho} & \mc W  \\
        D' \,.
    \end{tikzcd}
    $$
    Let $[q':E \to F']$ be any closed point in $D'$.
    The fiber of $\rho$ over the point $[q']$
    is contained in the quot scheme $Q_{0,\delta}(\ker q')$.
    It is easy to see that, 
    if the torsion free sheaf $\ker q'$ has type $a$ then the quot scheme $Q_{0,\delta}(\ker q')$ has dimension at most $(r-k+a)\delta$.
    By Lemma \ref{lemma type of kernel}, the type of $F'$ is also $a$ and so $a \leqslant k$.
    Consequently the fiber $\rho^{-1}([q'])$ has dimension at most $r\delta$.
    So we have 
    \begin{align*}
        \dim \mc W  \leqslant \dim H' 
         \leqslant \dim D' + r\delta 
         \leqslant \dim D' + r\delta_0 
    \end{align*}
    as $\delta \leqslant \delta_0$.
    In other words we have $\dim D' \geqslant \dim \mc W - r\delta_0$.
    Using the lower bound of $\dim \mc W$ from Proposition \ref{proposition dimension bound}, 
    we get that 
    \begin{equation}\label{equation dim D'}
        \dim D' \geqslant dr-ke+k(r-k)(1-g)-(r+1)\delta_0\,.
    \end{equation}
    As $d \geqslant d_7$, it follows that $\dim D' > \dim X_{\mu_0}$.
    So a general point of $D'$ is not in $X_{\mu_0}$. 
This implies that, a general point $[q':E \to F']$ in $D'$, satisfies the inequality $\mu_{\max}(\ker q')\leqslant \mu_0$.
    Let $[q':E \to F'] \in D'$ be such a general point.
    Choose a point $(q',\lambda)$ in the fiber $\rho^{-1}([q'])$ and 
    let the image $g(q',\lambda)$ be $[q]$.
    It is easy to see from the description of the map $g$ that $\ker q \subset \ker q'$.
    Thus $$\mu_{\max}(\ker q) \leqslant \mu_{\max}(\ker q')\leqslant \mu_0\,.$$
    This says that the point $[q]$ in $\mc W$, satisfies that $\mu_{\max}(\ker q) < \mu_{\min}(E \otimes (\omega^\circ_C)^\vee)$ and hence
    $\Hom(E \otimes (\omega^\circ_C)^\vee,\ker q)=0$.
    Using Serre duality \eqref{equation Serre duality} we get that 
    $ \Ext^1(\ker q, E)=0\,.$
    Using \cite[Theorem 1.4]{Lan83}, we conclude 
    the proof of the proposition.
\end{proof}

\begin{lemma}\label{lemma general cokernel free at node}
    There exists a number $d_8(E,k)$ such that for any stable locally free sheaf $S$ on $C$ of rank $r-k$ and degree $d \leqslant d_8$, the sheaf $S^\vee \otimes E$ is generated by global sections and $H^1(S^\vee \otimes E) =0$.
    For such a sheaf $S$ and a general homomorphism $\psi: S \to E$, the cokernel of $\psi$ is free at the node $x$.
\end{lemma}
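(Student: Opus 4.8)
The plan is to prove the two assertions in turn. For the first, I would reduce to the globally-generated/$H^1$-vanishing statement already isolated in Lemma~\ref{lemma gg and H1 vanishing of EdualF}. Since the dual of a stable bundle is stable and locally free, $S^\vee$ is stable locally free of rank $r-k$ and degree $-\deg S$, and $\HOM(S,E)=S^\vee\otimes E$. Hence the first assertion is an instance of \cite[Lemma~6.1]{PR03} (the same statement as Lemma~\ref{lemma gg and H1 vanishing of EdualF}, with $E$ as the fixed bundle and stable bundles of rank $r-k$ in place of those of rank $k$): there is a constant $d_1'=d_1'(E,r-k)$ such that $\deg S^\vee\geq d_1'$ forces $S^\vee\otimes E$ to be globally generated with $H^1(S^\vee\otimes E)=0$. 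One then sets $d_8:=-d_1'$, so that the hypothesis $\deg S\leq d_8$ is exactly what is needed. (If a self-contained argument is preferred: by Serre duality \eqref{equation Serre duality}, $H^1(S^\vee\otimes E)$ is dual to $\Hom(E\otimes(\omega^{\circ}_C)^\vee,\,S)$, which vanishes as soon as $\mu(S)<\mu_{\min}(E\otimes(\omega^{\circ}_C)^\vee)$ by \cite[Lemma~1.3.3]{HL} since $S$ is stable; and global generation of $S^\vee\otimes E$ for $\deg S$ sufficiently negative is the standard boundedness argument, no different from the smooth case since $S$ and $E$ are both locally free.)

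For the second assertion I would follow the pattern of Lemma~\ref{lemma generic surjection}. Note first that $\Hom(S,E)=H^0(S^\vee\otimes E)\neq 0$, since $H^1(S^\vee\otimes E)=0$ while $\chi(S^\vee\otimes E)=(r-k)e-r\deg S+r(r-k)(1-g)>0$ for $\deg S\leq d_8$. Consider the evaluation map $\mathrm{ev}_x:\Hom(S,E)\to\Hom(S\vert_x,E\vert_x)$ at the node. Because $S^\vee\otimes E=\HOM(S,E)$ is globally generated, $\mathrm{ev}_x$ is surjective. Inside the space of $r\times(r-k)$ matrices $\Hom(S\vert_x,E\vert_x)$, the locus of maps of rank $\leq r-k-1$ is a determinantal subvariety of codimension $k+1\geq 2$; its complement is therefore a nonempty open set, and its preimage under $\mathrm{ev}_x$ is a nonempty (hence dense) open subset of $\Hom(S,E)$. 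So for a general $\psi:S\to E$ the linear map $\psi\vert_x:S\vert_x\to E\vert_x$ is injective.

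It remains to check the local statement: if $\psi\vert_x$ is injective, then the stalk $\coker(\psi)_x$ is a free $\O_{C,x}$-module. Writing $R=\O_{C,x}$ and choosing trivializations near $x$, $\psi_x:R^{\oplus(r-k)}\to R^{\oplus r}$ is given by an $r\times(r-k)$ matrix over $R$; injectivity of $\psi\vert_x=\psi_x\otimes_R R/\m_x$ says the reduced matrix has rank $r-k$, i.e. some maximal $(r-k)\times(r-k)$ minor is nonzero mod $\m_x$, hence (as $R$ is local) a unit in $R$. Using this invertible minor one clears the matrix, by row and column operations over $R$ — that is, automorphisms of $R^{\oplus r}$ and of $R^{\oplus(r-k)}$ — into the block form
\[
\begin{pmatrix} I_{r-k}\\ 0\end{pmatrix},
\]
so that $\psi_x$ is, up to these automorphisms, the standard inclusion and $\coker(\psi_x)\cong R^{\oplus k}$ is free. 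Since the stalk of $\coker(\psi)$ at $x$ is free, the cokernel is free at the node, as claimed. I expect the only subtle ingredient to be the first assertion — the global generation of $S^\vee\otimes E$ over the nodal curve, where the twist by $\m_x$ must be handled with a little care — which is precisely why I would appeal to Lemma~\ref{lemma gg and H1 vanishing of EdualF}; granting that, the second assertion is essentially formal and uses nothing about the node beyond the fact that $\O_{C,x}$ is local.
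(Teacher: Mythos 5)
Your proposal is correct and follows essentially the same route as the paper: apply Lemma \ref{lemma gg and H1 vanishing of EdualF} with the fixed bundle dualized (note that your phrasing ``with $E$ as the fixed bundle'' would literally yield global generation of $E^\vee\otimes S^\vee$; the substitution should be $E\mapsto E^\vee$ and $k\mapsto r-k$, giving $d_8:=-d_1(E^\vee,r-k)$, exactly as the paper does), and then use global generation of $\HOM(S,E)$ to conclude that a general $\psi$ restricts to an injective map of fibres at $x$, whence the cokernel is free at the node. Your surjectivity of the evaluation map, the determinantal codimension count, and the explicit verification that an injective reduction mod $\m_x$ forces $\coker(\psi)_x$ to be a free $\O_{C,x}$-module are simply a fleshed-out version of the one-line justification the paper gives for the same step.
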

\begin{proof}
    In Lemma \ref{lemma gg and H1 vanishing of EdualF}, replacing $E$ by $E^\vee$ and $k$ by $r-k$, we get the number $d'_1:= d_1(E^\vee, r-k)$
    such that for any stable locally free sheaf $G$ on $C$ of rank $r-k$ and degree $d \geqslant d'_1$, the sheaf $E \otimes G$ is globally generated and $H^1(E \otimes G)=0$.
    We take 
    $$ d_8:= -d'_1 \,.$$
    Let $S$ be any stable locally free sheaf on $C$ of rank $r-k$ and degree $d \leqslant d_8$.
    Then $S^\vee$ is stable locally free of rank $r-k$ and degree $-d\geqslant d'_1$.
    So $S^\vee \otimes E$ is generated by global sections and $H^1(S^\vee \otimes E)=0$.
    This proves the first assertion.

    We prove the second assertion.
    A general homomorphism of the vector spaces $S\vert_x \to E\vert_x$ is injective.
    Since $S^\vee \otimes E$ is globally generated, it follows that for a general homomorphism $ \psi : S \to E$, the restriction 
    $\psi\vert_x : S\vert_x \to E\vert_x$ is injective and consequently the cokernel of $\psi$ is free at $x$.
\end{proof}

\begin{lemma}\label{lemma injective locus}
Let $Y$ be an irreducible Noetherian scheme and 
let $\mc G_1$ and $\mc G_2$ be coherent sheaves on $C \times Y$ which are flat over $Y$. 
Let $\phi:\mc G_1\to \mc G_2$ 
be a map of sheaves. 
The locus of points $y\in Y$, where $\phi_y$ is injective, 
is an open subset of $Y$. 
\end{lemma}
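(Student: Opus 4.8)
\emph{Proof strategy.} Openness is local on $Y$, so the plan is to fix a point $y_0\in Y$ at which $\phi_{y_0}$ is injective and to produce an open neighbourhood of $y_0$ over which every $\phi_y$ is still injective. After replacing $Y$ by $\spec\mc O_{Y,y_0}$ I would work over a Noetherian local ring with closed point $y_0$ and residue field $k$; write $p\colon X:=C\times Y\to Y$ for the projection, which is proper since $C$ is projective, and put $\mc K:=\ker\phi$, $\mc I:=\im\phi$ and $\mc Q:=\coker\phi$, all coherent on $X$. For a coherent sheaf $\mc M$ on $X$, let $\mc M_y$ denote its restriction $\mc M\otimes_{\mc O_Y}\kappa(y)$ to $C_y=C\times\{y\}$.

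The crux is to deduce from injectivity of $\phi_{y_0}$ that $\mc Q=\coker\phi$ is flat over $Y$ in a neighbourhood of the whole fibre $C_{y_0}$. To see this I would factor $\phi_{y_0}$ as $(\mc G_1)_{y_0}\xrightarrow{\ \alpha\ }\mc I_{y_0}\xrightarrow{\ \beta\ }(\mc G_2)_{y_0}$: right exactness of $-\otimes_{\mc O_Y}k$ applied to $\mc G_1\twoheadrightarrow\mc I$ makes $\alpha$ surjective, and the long exact $\Tor$-sequence of $0\to\mc I\to\mc G_2\to\mc Q\to 0$, together with $Y$-flatness of $\mc G_2$, identifies $\ker\beta$ with the restriction to $C_{y_0}$ of $\mathscr{T}\!or^{\mc O_Y}_1(\mc Q,k)$. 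Since $\phi_{y_0}=\beta\circ\alpha$ is injective and $\alpha$ is surjective, both $\alpha$ and $\beta$ must be injective; in particular $\mathscr{T}\!or^{\mc O_Y}_1(\mc Q,k)$ vanishes along $C_{y_0}$. Applying the local criterion of flatness stalkwise (each $\mc Q_{(c,y_0)}$ is a finite module over the Noetherian local $\mc O_{Y,y_0}$-algebra $\mc O_{X,(c,y_0)}$) then shows that $\mc Q$ is $Y$-flat at every point of $C_{y_0}$; as the locus in $X$ where $\mc Q$ is $Y$-flat is open and $p$ is closed, $\mc Q$ is $Y$-flat over $p^{-1}(V)$ for some open $V\ni y_0$, and after shrinking $Y$ I may assume $\mc Q$ is $Y$-flat.

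The remaining steps should be formal. In $0\to\mc I\to\mc G_2\to\mc Q\to 0$ the terms $\mc G_2,\mc Q$ are $Y$-flat, hence so is $\mc I$; then in $0\to\mc K\to\mc G_1\to\mc I\to 0$ the terms $\mc G_1,\mc I$ are $Y$-flat, hence so is $\mc K$. Flatness of $\mc I$ makes $0\to\mc K_{y_0}\to(\mc G_1)_{y_0}\xrightarrow{\ \alpha\ }\mc I_{y_0}\to 0$ exact, and since $\alpha$ is injective we get $\mc K_{y_0}=0$, so $\mc K=0$ in a neighbourhood of $C_{y_0}$ by Nakayama applied to stalks. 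Shrinking $Y$ one last time, $\phi$ is injective on $X$ and $0\to\mc G_1\xrightarrow{\ \phi\ }\mc G_2\to\mc Q\to 0$ is a short exact sequence of $Y$-flat coherent sheaves; restricting it to any fibre $C_y$ keeps it exact because $\mc Q$ is $Y$-flat, so $\phi_y$ is injective for every $y$, which is what was wanted.

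I expect the middle paragraph to be the real obstacle: extracting, purely from injectivity of the single map $\phi_{y_0}$, the flatness of $\coker\phi$ along all of $C_{y_0}$. This forces one to route the argument through $\im\phi$ and its fibre rather than working with $\ker\phi$ directly, and to invoke the fibrewise local criterion of flatness in exactly the right form (together with openness of the flat locus and properness of $p$, to upgrade pointwise flatness along $C_{y_0}$ to flatness over a neighbourhood of $y_0$). The two surrounding paragraphs are standard reductions and routine homological algebra.
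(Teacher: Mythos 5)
Your argument is correct, but it follows a genuinely different route from the one in the paper. The paper's proof is global: it applies the flattening stratification of $\coker\phi$ over $Y$ and uses additivity of Hilbert polynomials in the four-term sequence $0\to\ker(\phi_y)\to\mc G_{1,y}\to\mc G_{2,y}\to\coker(\phi)_y\to 0$ to show that the injective locus is exactly the open stratum where $\coker\phi$ has minimal Hilbert polynomial; this uses flatness of $\mc G_1,\mc G_2$ only to make their fibrewise Hilbert polynomials constant, and hence implicitly uses connectedness of $Y$. Your proof is local around a point $y_0$ of the injective locus: you identify $\ker\beta$ with $\mathscr{T}\!or_1^{\mc O_Y}(\coker\phi,\kappa(y_0))$ along $C_{y_0}$, deduce its vanishing from injectivity of $\phi_{y_0}$, invoke the local criterion of flatness stalkwise, and then upgrade to a neighbourhood via openness of the flat locus and properness of $p$; a second application of flatness kills $\ker\phi$ near $C_{y_0}$ and a third restores injectivity on every nearby fibre. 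What your route buys is independence from the irreducibility (even connectedness) of $Y$ and from the flattening stratification as a black box, plus the extra information that $\coker\phi$ is $Y$-flat and $\ker\phi=0$ over the injective locus; what the paper's route buys is brevity and an explicit identification of the locus as a stratum. One presentational caveat: literally replacing $Y$ by $\spec\mc O_{Y,y_0}$ at the outset is at odds with your later (correct) steps of ``shrinking $Y$'' to genuine open neighbourhoods of $y_0$ --- the $\Tor$ computations only ever use the residue field at $y_0$, so you should simply work over an affine open neighbourhood of $y_0$ in $Y$ and drop the localization.
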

\begin{proof}
We will use the existence of flattening stratification, see \cite[\textsection 5.4.2]{FGA}. 
Let us assume that the set $y \in Y$ such the $\phi_y$ is injective
is nonempty, or else, there is nothing to prove. 
Let $\mc G$ denote the cokernel of $\phi$. 
Let $I$ denote the finite set consisting of the Hilbert polynomials of the sheaves $\{\mc G_y : y \in Y\}$. 
Let $f_1<f_2<\ldots<f_r$ be the elements of $I$ with the usual total
ordering on polynomials ($f<g$ iff $f(n)<g(n)$ for all $n\gg0$). 
Using \cite[Theorem 5.13]{FGA}, we get locally closed subschemes $Y_{f_i}$ 
where $Y_{f_i}$ consists of all the points $y \in Y$ for which the Hilbert polynomial of $\G_y$ is $f_i$.
Moreover the subset $Y_{f_1}$ is open.
We have an exact sequence 
$$0\to \ker(\phi_y) \to \mc G_{1,y}\xrightarrow{\phi_y} \mc G_{2,y}\to\mc G_y\to 0\,.$$
We denote the Hilbert polynomial of a sheaf $F$ on $C$ by ${\rm Hilb}(F)$.
Hence we have
$$ {\rm Hilb}(\mc G_y)={\rm Hilb}(\mc G_{2,y})-{\rm Hilb}(\mc G_{1,y})+{\rm Hilb}(\ker(\phi_y))\,.$$
Let $y_0$ be a point for which $\phi_{y_0}$ is injective. 
Then $${\rm Hilb}(\mc G_{y_0}) = {\rm Hilb}(\mc G_{2,y_0})-{\rm Hilb}(\mc G_{1,y_0}) \,.$$
Consequently, by flatness of $\G_1$ and $\G_2$, we have
\begin{align*}
    {\rm Hilb}(\mc G_y)&={\rm Hilb}(\mc G_{2,y})-{\rm Hilb}(\mc G_{1,y})+{\rm Hilb}(\ker(\phi_y))\\
    &={\rm Hilb}(\mc G_{2,y_0})-{\rm Hilb}(\mc G_{1,y_0})+{\rm Hilb}(\ker(\phi_y))\\
    & = {\rm Hilb}(\mc G_{y_0})+{\rm Hilb}(\ker(\phi_y))\,.
\end{align*}
Thus for any point $y \in Y$ we have 
${\rm Hilb}(\mc G_y)\geqslant {\rm Hilb}(\mc G_{y_0})$. 
Moreover, equality holds iff $\ker(\phi_y)=0$. 
This shows that ${\rm Hilb}(\mc G_{y_0})=f_1$
and that the set of points 
for which $\phi_y$ is injective is precisely the set $Y_{f_1}$. 
Hence the set of points $y$ for which $\phi_y$ is injective is open.
\end{proof}

\begin{theorem}\label{main theorem k>r/2}
Let $k$ be an integer such that $\frac{r}{2}<k<r$. Then there is a number $d_9(E,k)$ 
such that if $d \geqslant d_9$ then
the quot scheme $Q_{k,d}(E)$ is irreducible.
\end{theorem}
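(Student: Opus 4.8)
The plan is to show that, once $d$ is large, every irreducible component of $Q_{k,d}(E)$ coincides with the component $\mc P$ of Remark \ref{remark component P}; since $\mc P$ is nonempty this yields irreducibility. So I would set $d_9:=\max\{d_5,d_6,d_7,\,e-d_8\}$ (note $d_9\geqslant d_4$), fix $d\geqslant d_9$, and take an arbitrary irreducible component $\mc W$ of $Q_{k,d}(E)$. If $\mc W$ meets the open torsion-free locus then $\mc W=\mc P$ by Corollary \ref{corollary component P contains torsion-free}, so assume $\mc W\subseteq Z$. Stratifying $Z$ by the length of the torsion subsheaf, a general point of $\mc W$ lies in some $Z_\delta$ with $\delta>0$, and $\delta\leqslant\delta_0$ by Lemma \ref{lemma bound on torsion for general point} (this is where $r/2<k$ is used). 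Moreover, by Proposition \ref{proposition Ext1(S,E) vanishinig}, a general point $[q:E\to F]$ of $\mc W$ also satisfies $\Ext^1(S,E)=0$, where $S:=\ker q$ is torsion-free of rank $r-k$ and degree $e-d$. Fixing such a general $[q]$, it then suffices to prove $[q]\in\mc P$: these points are dense in $\mc W$ and $\mc P$ is closed, so $\mc W\subseteq\mc P$, and by maximality $\mc W=\mc P$.

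Following step (c) of the introduction, the idea is to deform the inclusion $S\hookrightarrow E$, with $E$ and the subsheaf degree $e-d$ fixed, until the cokernel becomes free at the node. First I would apply Proposition \ref{proposition deformation of torsion free sheaves to stable locally free} (with $k$ replaced by $r-k$) to get an integral scheme $T$, a $T$-flat family $\mc S$ of torsion-free rank-$(r-k)$, degree-$(e-d)$ sheaves on $C\times T$, a point $t_0\in T$ with $\mc S_{t_0}=S$, and a dense open locus of $t\in T$ over which $\mc S_t$ is stable locally free. Then I would globalize the map $S\hookrightarrow E$: exactly as in the proof of Lemma \ref{lemma dual isomorphism} (via Lemma \ref{lemma oda seshadri}), $\mc S^\vee:=\HOM(\mc S,\O_{C\times T})$ is $T$-flat with $(\mc S^\vee)_t=(\mc S_t)^\vee$, so $\mc S^\vee\otimes p_C^*E$ is $T$-flat with fibre $\HOM(\mc S_t,E)$. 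Its $H^1$ vanishes at $t_0$ (since $\Ext^1(S,E)=0$ forces $H^1(\HOM(S,E))=0$ by the local-to-global spectral sequence) and over a dense open set of $t$ (by Lemma \ref{lemma general cokernel free at node}, as $\deg\mc S_t=e-d\leqslant d_8$); after shrinking $T$ to that open locus, $\mc H:={p_T}_*(\mc S^\vee\otimes p_C^*E)$ is locally free and compatible with base change.

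Next let $\Gamma\to T$ be the total space of $\mc H$: it is irreducible, a point is a pair $(t,\psi)$ with $\psi\colon\mc S_t\to E$, and on $C\times\Gamma$ there is a tautological map $\Psi\colon\widetilde{\mc S}\to\rho_C^*E$ (with $\widetilde{\mc S}$ the pullback of $\mc S$). By Lemma \ref{lemma injective locus} the locus $U\subseteq\Gamma$ where $\Psi$ is fibrewise injective is open, and over $C\times U$ the cokernel of $\Psi$ has constant Hilbert polynomial, hence is $U$-flat (\cite[Prop.~2.1.2]{HL}), giving a morphism $g\colon U\to Q_{k,d}(E)$, $(t,\psi)\mapsto[E\to\coker\psi]$ (rank $k$, degree $d$). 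The point $\gamma_0:=(t_0,\,S\hookrightarrow E)$ lies in $U$ with $g(\gamma_0)=[q]$. Since $U$ is dense open in the irreducible $\Gamma$ and $\Gamma\to T$ is a vector bundle, a general point $(t,\psi)$ of $U$ has $t$ general (so $\mc S_t$ stable locally free and $\mc S_t^\vee\otimes E$ globally generated) and $\psi$ general in $\Hom(\mc S_t,E)$; by Lemma \ref{lemma general cokernel free at node}, $\coker\psi$ is then free at the node, so $g(t,\psi)$ lies in the torsion-free locus or in $Z^{\rm g}$, and $Z^{\rm g}\subseteq\mc P$ by Proposition \ref{Zg in P} (as $d\geqslant d_5$). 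Hence $g$ carries a nonempty open subset of the irreducible $U$ into the closed set $\mc P$, so $g(U)\subseteq\mc P$; in particular $[q]=g(\gamma_0)\in\mc P$, and we are done.

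I expect the main obstacle to be the functorial bookkeeping of the family of homomorphisms $\mc S\to E$ over $T$ — getting $\mc S^\vee\otimes E$ flat with the correct fibres (forcing the use of Lemma \ref{lemma oda seshadri}), the $H^1$-vanishing at the marked point $t_0$ (which is precisely where the input $\Ext^1(S,E)=0$ of Proposition \ref{proposition Ext1(S,E) vanishinig} is needed), and the $U$-flatness of the cokernel needed to land in $Q_{k,d}(E)$. A secondary subtlety is that a \emph{general} deformed cokernel is only controlled at the node, so Proposition \ref{Zg in P} ($Z^{\rm g}\subseteq\mc P$) is essential for the final step.
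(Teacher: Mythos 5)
Your proposal is correct and follows essentially the same route as the paper: reduce to a component $\mc W\subseteq Z$, take a general point with $\Ext^1(\ker q,E)=0$ (Proposition \ref{proposition Ext1(S,E) vanishinig}), deform $\ker q$ inside the family from Proposition \ref{proposition deformation of torsion free sheaves to stable locally free}, spread the inclusion over the (total space of the) relative Hom bundle, and use Lemma \ref{lemma general cokernel free at node} together with Proposition \ref{Zg in P} to land in $\mc P$. The only deviations are cosmetic — the paper uses the projectivization $\P(\G^\vee)$ with \cite[Theorem 1.4]{Lan83} where you use the total space of $\mc H$ with Lemma \ref{lemma oda seshadri} and Grauert, and your inclusion of $d_6$ in $d_9$ is redundant since $d_7\geqslant d_6$.
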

\begin{proof}
    We define
    $$ d_9:= \max\{d_4, d_5, d_7, e-d_8\}\,.$$
    Assume $d \geqslant d_9$.
    As $d \geqslant d_4$, recall that, by Theorem \ref{theorem torsion-free component} or Corollary \ref{corollary component P contains torsion-free}, there is a unique irreducible component $\mc P$ of $Q_{k,d}(E)$ which contains the torsion-free locus $Q^{\tf}_{k,d}(E)$. 
    We want to show that there is no other component of the quot scheme $Q_{k,d}(E)$.
    
    On the contrary, let us assume that $\mc W$ is an irreducible component of $Q_{k,d}(E)$ different from $\mc P$.
    So any closed point in $\mc W$ corresponds to a quotient with torsion.
    Let $[q:E \to F]$ be a general point in $\mc W$.
    Let $S_F$ denote the kernel of $q$.
    Then $S_F$ is a torsion-free sheaf on $C$ of rank $r-k$ and degree $e-d$.
    Using Proposition \ref{proposition deformation of torsion free sheaves to stable locally free}, we get a family $\S$ of torsion-free sheaves on $C$ of rank $r-k$ and degree $e-d$, parametrized by an integral scheme $T$ such that the sheaf $\S_t$ is stable locally free for a general point $t \in T$ and $\S_{t_0}=S_F$ for some $t_0 \in T$.
    Let $p_C : C \times T \to T$ and $p_T: C \times T \to T$ denote the projections.
    We consider the sheaf $\HOM(\S,p_C^*E)$ and let $\G$ be its push-forward on $T$,
    $$ \G := {p_T}_*(\HOM(\S,p_C^*E))\,.$$
    Proposition \ref{proposition Ext1(S,E) vanishinig} shows that we can assume $\Ext^1(S_F,E)=0$.
    We replace $T$ by the open subset containing points $t$ such that $\Ext^1(\S_t,E)=0$.
    Clearly $t_0 \in T$.
    Using \cite[Theorem 1.4]{Lan83}, we conclude that $G$ is a locally free sheaf on $T$.
    So the projectivization $\P:=\P(\G^\vee)$ is irreducible.
    Let $\eta : \P \to T$ be the structure map.
    We have the tautological quotient of sheaves on $\P$,
    $$ \eta^* \G^\vee \longrightarrow \O_{\P}(1) \longrightarrow 0\,,$$
    that is,
    \begin{equation}\label{tautological map in main theorem}
    \eta^* ({p_T}_*(\HOM(\S,p_C^*E)))^\vee \longrightarrow  \O_{\P}(1)    \longrightarrow 0\,. 
    \end{equation}
    Let $\rho_\P$ and $\wt \eta$ denote the maps as shown in the following Cartesian diagram and $\rho_C: C \times \P \to \P$ denote the projection
    $$ 
    \begin{tikzcd}
    C \times \P \arrow{r}{\wt \eta} \arrow{d}{\rho_\P} & C \times T \arrow{d}{p_T}\\
    \P \arrow{r}{\eta} & T \,.
    \end{tikzcd}
    $$
    Dualizing \eqref{tautological map in main theorem} and taking adjoint, we get a map of sheaves on $C \times \P$, 
    $$\rho_\P^*\O_\P(-1) \longrightarrow \wt\eta^*\HOM(\S,p_C^*E)\,.$$
    Tensoring with $\wt\eta^*\S$ 
    and composing with the natural map
    $\wt\eta^*\Hom(\S,p_C^*E) \otimes \wt\eta^*\S  \to \wt\eta^*p_C^*E$,
    we get a map of sheaves on $C \times \P$,
    $$\Psi: \wt\eta^*\S \otimes \rho_\P^*\O_\P(-1) \longrightarrow \wt\eta^*p_C^*E\,.$$
    A closed point $u \in \P$ corresponds to a pair $(t=\eta(u),\psi_u : \S_t \to E)$, where $\psi_u$ is a non-zero map.
    Restriction of $\Psi$ on $C \times \{u\}$ is the map $\psi_u$ itself.
    Let $U$ be the following subset of $\P$,
    $$ U := \{ u \in \P: \Psi\vert_{C \times \{u\}} : \S_{\eta(u)} \longrightarrow E \textrm{ is injective }\}\,.$$
    Lemma \ref{lemma injective locus} shows that $U$ is an open subset of $\P$.
    There is a point $u_0 \in \P$ which corresponds to the pair $(t_0, S_F \hookrightarrow E)$.
    Clearly this point is in $U$, which shows that $U$ is non-empty.
    Let $\B$ be the cokernel of $\Psi$.
    For any $u \in U$, we have the short exact sequence
    $0 \to \S_{\eta(u)} \to E \to \B_u \to 0\,.$
    It follows that the Hilbert polynomial of $\B_u$ is constant as a function of $u$.
    Thus $\B$ is flat over $U$.
    The surjection of sheaves $\rho_C^*E \longrightarrow \B \longrightarrow 0$ on $C \times U$ gives a morphism of schemes
    $$ g : U \longrightarrow Q_{k,d}(E)\,.$$
    The map $g$ sends a closed point $u \in U$ to the point $[E \to \B_u]$ in the Quot scheme.
    Recall that a general member of the family $\S$ is a stable locally free sheaf on $C$.
    So for a general point $u \in U$,
    $\S_{\eta(u)}$ is a stable locally free sheaf on $C$ of rank $r-k$ and degree $e-d$.
    As $d \geqslant e-d_8$, $\deg(\S_{\eta(u)}) = e-d \leqslant d_8$.
    Using Lemma \ref{lemma general cokernel free at node}, we conclude that for a general morphism $\alpha : \S_{\eta(u)} \longrightarrow E$, the cokernel of $\alpha$ is free at the node $x \in C$. 
    In other words, for a general point $u \in U$, its image 
    $g(u) = [E \to \B_u]$ is contained in the locus $Z^{\rm g}$ of $Q_{k,d}(E)$.
    Lemma \ref{Zg in P} shows that the locus $Z^{\rm g}$ lies in the component $\mc P$. 
    So image of a general point of $U$ is in $\mc P$.
    Hence the image of whole $U$ is in $\mc P$.
    In particular, the point we started with, that is, $[q:E \to F]=g(u_0)$ is in $\mc P$.
    As $[q:E \to F]$ is a general point in $\mc W$, this proves that $\mc W = \mc P$, which is a contradiction.
This proves that the quot scheme $Q_{k,d}(E)$ is irreducible.
\end{proof}

\begin{theorem}\label{theorem main}
    Let $E$ be a vector bundle on $C$ of rank $r$ and degree $e$ 
    and let $k$ be an integer such that $0<k<r$.
    Then there is a number $d_Q(E,k)$ such that if $d \geqslant d_Q$ then the quot scheme $Q_{k,d}(E)$ is irreducible.
    Moreover, $Q_{k,d}(E)$ is generically smooth and has dimension 
    $dr-ke+k(r-k)(1-g)$.
\end{theorem}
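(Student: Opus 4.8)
The plan is to derive the theorem from Theorem \ref{main theorem k>r/2} by splitting into the cases $2k>r$ and $2k\le r$. When $2k>r$ we have $\frac r2<k<r$, so Theorem \ref{main theorem k>r/2} already supplies an integer $d_9(E,k)$ with $Q_{k,d}(E)$ irreducible for $d\geq d_9$; and once $d\geq d_2(E,k)$ the closure $\mc P$ of the locus of stable locally free quotients is, by Theorem \ref{theorem stable component} and Remark \ref{remark component P}, a generically smooth irreducible component of dimension $dr-ke+k(r-k)(1-g)$. Hence for $d\geq\max\{d_2,d_9\}$ the unique component is $\mc P$ and all three assertions hold; this settles the case $2k>r$ with $d_Q:=\max\{d_2,d_9\}$.

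Now assume $2k\le r$, so $\frac r2\le r-k<r$. Theorem \ref{theorem torsion-free component} holds for every $0<k<r$, so for $d\geq d_4$ the torsion-free locus $Q^{\tf}_{k,d}(E)$ is irreducible and its closure $\mc P$ is a component, generically smooth of dimension $dr-ke+k(r-k)(1-g)$ by Theorem \ref{theorem stable component}; when $2k<r$ this dimension may alternatively be read off from the duality isomorphism $\V$ of Lemma \ref{lemma dual isomorphism}, which identifies $Q^{\tf}_{k,d}(E)$ with a dense open subset of $Q_{r-k,d-e}(E^\vee)$ — irreducible by the already-proved case $2k>r$ applied to $(E^\vee,r-k)$ — of dimension $(d-e)r+(r-k)e+k(r-k)(1-g)=dr-ke+k(r-k)(1-g)$. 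It remains only to prove that the torsion locus $Z$ contains no irreducible component. The locally-free-kernel estimate in Lemma \ref{lemma dimension Zdelta}, and hence Proposition \ref{Zg in P}, are valid for all $0<k<r$, so $Z^{\rm g}\subseteq\mc P$ for $d\geq d_5$; combined with Corollary \ref{corollary component P contains torsion-free} this shows that a hypothetical component $\mc W\neq\mc P$ has a general point $[q:E\to F]$ with $F$ not torsion-free and $F_x$ not free, i.e. $\ker q$ torsion-free of rank $r-k$ and type $\geq 1$. I would then rerun the deformation argument of the proof of Theorem \ref{main theorem k>r/2}: deform $\ker q$ to a stable locally free sheaf (Proposition \ref{proposition deformation of torsion free sheaves to stable locally free} in rank $r-k$), lift the inclusion $\ker q\hookrightarrow E$ along this deformation using $\Ext^1(\ker q,E)=0$ and \cite[Theorem 1.4]{Lan83}, and invoke Lemma \ref{lemma general cokernel free at node} (valid for all $k$, needing only $e-d\le d_8$) to see that a general member of the resulting family has locally free cokernel, hence lies in $Z^{\rm g}\subseteq\mc P$; this forces $[q]\in\mc P$, a contradiction. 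Taking $d_Q$ to be the maximum of the constants produced then finishes the proof.

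The step I expect to be the main obstacle is precisely the vanishing $\Ext^1(\ker q,E)=0$ for a general point of a bad component when $2k\le r$ (the analogue of Proposition \ref{proposition Ext1(S,E) vanishinig}). For $2k>r$ this was obtained from the bound $\len(\Tor(F))\le\delta_0$ on a general bad point (Lemma \ref{lemma bound on torsion for general point}), which in turn rested on the estimate of Lemma \ref{lemma dimension Zdelta} — and that estimate is only strong enough because $2k>r$ forces the type of $\ker q$ to be at most $k-1$. When $2k\le r$ this no longer holds, so I would instead try to bound $\mu_{\max}(\ker q)$ directly: since $\ker q\subseteq\ker q'$ for the saturated quotient $E\to F'=F/\Tor(F)$, and $\V$ sends $[E\to F']$ into $Q^{\tf}_{r-k,\,d-\delta-e}(E^\vee)$ where the relevant kernel now has rank $r-k\ge r/2$, the $d$-growing codimension estimates of Lemmas \ref{lemma dimension Ymu} and \ref{lemma dimension Xmu} (which hold for all $k$) together with the lower bound of Proposition \ref{proposition dimension bound} should show that a general point of the image of $\mc W$ in $Q^{\tf}_{k,d-\delta}(E)$ avoids the locus $X_{\mu_0}$, whence $\mu_{\max}(\ker q)\le\mu_0$ and $\Ext^1(\ker q,E)=0$ by Serre duality \eqref{equation Serre duality}; the borderline case $2k=r$ requires the most care and is the genuinely delicate point.
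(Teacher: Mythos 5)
Your treatment of the case $2k>r$ is exactly the paper's, and your observation that $Z^{\rm g}\subseteq\mc P$ (Lemma \ref{lemma no general point of Z0delta} and Proposition \ref{Zg in P}) survives for all $k$ is correct, since those only use the second estimate of Lemma \ref{lemma dimension Zdelta}, which needs nothing about the type of the kernel. But the case $2k\le r$ has a genuine gap, and you have put your finger on it without closing it. The whole engine of Theorem \ref{main theorem k>r/2} for components inside $Z$ needs the bound $\len(\Tor(F))\le\delta_0$ at a general point of a bad component (Lemma \ref{lemma bound on torsion for general point}): it is what guarantees that the image $D'$ of $\mc W$ in $Q^{\tf}_{k,d-\delta}(E)$ loses at most $r\delta_0$ in dimension, and that $d-\delta$ is large enough for Lemma \ref{lemma dimension Xmu} to apply in degree $d-\delta$. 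That bound comes from the first estimate of Lemma \ref{lemma dimension Zdelta}, $\dim Z_\delta\le\dim Q^{\tf}_{k,d-\delta}(E)+(r-1)\delta$, and when $k\le r/2$ the type of $S_{F'}$ can be as large as $r-k\ge k$, so the fiber bound becomes $(r-k+a)\delta$ with $r-k+a$ possibly $\ge r$; plugging this in gives $\dim Z_\delta\le[dr-ke+k(r-k)(1-g)]+(r-2k)\delta$, which for $k<r/2$ \emph{grows} with $\delta$ and yields no bound at all. Your proposed substitute --- bounding $\mu_{\max}(\ker q)$ via $\V$ and Lemmas \ref{lemma dimension Ymu}, \ref{lemma dimension Xmu} --- still presupposes both that $\dim D'$ is within a $d$-independent constant of $\dim\mc W$ and that $d-\delta$ exceeds the thresholds $\alpha_3,\alpha_4$; a priori $\delta$ can be as large as $d-m(E,k)$, so neither holds. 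The argument as sketched therefore does not go through for $2k\le r$.

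The paper avoids all of this with a different and much cleaner reduction: for $0<k\le r/2$ set $t=r-2k+1$, $E'=E\oplus\O_C^{\oplus t}$, $r'=r+t$, $k'=k+t$, so that $k'>r'/2$ and Theorem \ref{main theorem k>r/2} applies to $Q_{k',d}(E')$. On the open locus $U\subseteq Q_{k',d}(E')$ where the composite $\ker q'\hookrightarrow E'\twoheadrightarrow E$ stays injective (open by Lemma \ref{lemma injective locus}), taking the cokernel of that composite defines a morphism $g:U\to Q_{k,d}(E)$, and $g$ is surjective on closed points because any $[q:E\to F]$ lifts to $[q\oplus\id:E'\to F\oplus\O_C^{\oplus t}]\in U$. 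Irreducibility of $Q_{k',d}(E')$ then forces irreducibility of $Q_{k,d}(E)$, and generic smoothness and the dimension come from Theorem \ref{theorem stable component} once $d\ge d_2(E,k)$, exactly as in your first case. If you want to salvage your approach, you would need a new argument bounding the torsion length of a general point of a putative bad component when $k\le r/2$; absent that, the stabilization trick is the way to finish.
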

\begin{proof}
If $k>\frac{r}{2}$ then we are done using Theorem \ref{main theorem k>r/2}, taking $d_Q(E,k):= d_9(E,k)$.
Otherwise we have $0<k \leqslant \frac{r}{2}$.
In this case, we will use Theorem \ref{main theorem k>r/2} to prove irreduciblity of the quot scheme. 
    Let $t$ be the integer $t:= r-2k+1 > 0$
    so that we have $$ k+t > \frac{r+t}{2}\,.$$
    We define $r' := r+t$ and $k':= k+t$.
    So the new pair $r',k'$ satisfies 
    $$ r' > k' > \frac{r'}{2}\,.$$
    Let $E'$ denote the vector bundle $E \oplus \O_C^{\oplus t}$ which has rank $r'$ and degree $e$.
    Applying Theorem \ref{main theorem k>r/2} for the bundle $E'$,
    we get a number $d'_9:=d_9(E',k')$
    such that if $d \geqslant d'_9$ then the quot scheme $Q_{k',d}(E')$ is irreducible.
    We will give a surjective map from an open subset of $Q_{k',d}(E')$ to the quot scheme $Q_{k,d}(E)$.
    That will prove irreducibility of $Q_{k,d}(E)$ when $d \geqslant d'_9$.

    We define $$d_Q := \max\{d'_9,d_2(E,k)\}\,.$$ Assume $d \geqslant d_Q$.
    Let $Q'$ denote the reduced scheme structure on the irreducible quot scheme $Q_{k',d}(E')$.
    So $Q'$ is integral.
    There is a universal short exact sequence of sheaves on $C \times Q'$,
    $$0 \to \S' \to p_C^*E' \to \F' \to 0 $$
    where $p_C : C \times Q' \to Q'$ is the projection.
    Let $\Psi$ denote the composition
    $$\S' \hookrightarrow p_C^*E' = p_C^*(E \oplus \O_C^{\oplus t}) \twoheadrightarrow p_C^*E\,.$$
    The locus of points $q' \in Q'$ for which the map $\Psi_{q'}$ is injective is an open subset by lemma \ref{lemma injective locus}.
    Let $U$ denote this open locus.
    Since $U$ is integral, using the Hilbert polynomial, it can be checked easily that the sheaf 
    $\coker(\Psi)\vert_{C \times U}$ is flat over $U$.
    So the quotient of sheaves $p_C^*E\vert_{C \times U} \to \coker(\Psi)\vert_{C \times U} \to 0 $ 
    on $C \times U$ defines a map of schemes
    $$g : U \longrightarrow Q_{k,d}(E)\,.$$
    Let us see the pointwise description of the map $g$.
    Let $[q': E' \to F'] \in Q_{k',d}(E')$ be a closed point in $U$.
    Let $S'$ denote the kernel of $q'$.
    As $[q']$ is in $U$, so the composition 
    $S' \hookrightarrow E' \to E$ is injective.
    Let $B$ denote the cokernel of this inclusion $S \hookrightarrow E$.
    Then $B$ is a sheaf of rank $k$ and degree $d$.
    The map $g$ sends the point $[q': E' \to F']$ to the point $[E \to B]$.
    To see that the map $g$ is surjective on closed points, 
    let $[q:E \to F]$ be a closed point in the quot scheme $Q_{k,d}(E)$.
    Using $q$, we get another quotient 
    $$\wt q:= q \oplus Id : E' = E \oplus \O_C^{\oplus t} \longrightarrow F \oplus \O_C^{\oplus t}\,.$$
    Clearly $F \oplus \O_C^{\oplus t}$ is a sheaf of rank $k'$ and degree $d$.
    So $\wt q$ corresponds to a closed point in $Q'$.
    Moreover, $\ker \wt q = \ker q \oplus 0 \subset E'$, 
    which implies that $[\wt q]$ is contained in the open set $U$.
    It is easy to see that $g([\wt q])=[q]$.
    This shows that the map $g$ is surjective on closed points.
    As $Q'$ is irreducible, so the quot scheme $Q_{k,d}(E)$ is also irreducible. This proves the theorem.

    The second assertion follows from Theorem \ref{theorem stable component}, as $d \geqslant d_2(E,k)$.
\end{proof}

\newcommand{\etalchar}[1]{$^{#1}$}

\end{document}